\newtheorem{theorem}{Theorem}[section]
\newtheorem{lemma}[theorem]{Lemma}
\newtheorem{proposition}[theorem]{Proposition}
\newtheorem{corollary}[theorem]{Corollary}
\newtheorem{conjecture}[theorem]{Conjecture}
\theoremstyle{definition}
\newtheorem{definition}[theorem]{Definition}
\newtheorem{remark}[theorem]{Remark}
\newcommand{\old}[1]{}
\newcommand{\R}{\mathbb{R}}
\newcommand{\C}{\mathbb{C}}
\newcommand{\E}{\mathbb{E}}
\newcommand{\Y}{\mathbb{Y}}
\newcommand{\PP}{\mathbb{P}}
\newcommand{\Z}{\mathbb{Z}}
\newcommand{\cov}{\mathrm{Cov}}
\newcommand{\var}{\mathrm{Var}}
\newcommand{\e}{\varepsilon}
\newcommand{\bi}{\mathbf{i}}
\renewcommand{\vec}[1]{\boldsymbol{#1}}
\newcommand{\cC}{\mathcal{C}}
\newcommand{\cD}{\mathcal{D}}
\newcommand{\cE}{\mathcal{E}}
\newcommand{\cF}{\mathcal{F}}
\newcommand{\cH}{\mathcal{H}}
\newcommand{\cK}{\mathcal{K}}
\newcommand{\cL}{\mathcal{L}}
\newcommand{\cM}{\mathcal{M}}
\newcommand{\cN}{\mathcal{N}}
\newcommand{\cP}{\mathcal{P}}
\newcommand{\cS}{\mathcal{S}}
\newcommand{\cT}{\mathcal{T}}
\newcommand{\sL}{\mathscr{L}}
\newcommand{\sgn}{\mathrm{sgn}}
\newcommand{\vast}{\bBigg@{4}}
\newcommand{\Vast}{\bBigg@{5}}
\newcommand{\wh}[1]{\widehat{#1}}
\newcommand{\wt}[1]{\widetilde{#1}}
\numberwithin{equation}{section}
\newcommand{\hloz}{\mathbin{\rotatebox[origin=c]{90}{$\Diamond$}}}
\newcommand{\rloz}{\mathbin{\rotatebox[origin=c]{33}{$\Diamond$}}}
\newcommand{\lloz}{\mathbin{\rotatebox[origin=c]{-33}{$\Diamond$}}}
\newcommand{\hs}{h_{\raisebox{-1pt}{\tiny$N$}}^{\raisebox{+1.5pt}{\tiny$\operatorname{Shift}$}}}
\newcommand{\hh}{h_{\raisebox{-1pt}{\tiny$N$}}}
\newcommand{\cpar}{%
  \raisebox{0.1em}{\rlap{\rotatebox{-45}{\rule[.10ex]{.4pt}{.5657em}}}%
  \kern.04em%
  \rlap{\kern.38em\raisebox{0.39em}{\rule{.4em}{.4pt}}}%
  \rule{.4em}{.4pt}\kern-.05em%
  \rotatebox{-45}{\rule[.1ex]{.4pt}{.5657em}}}}
\newcommand{\lpar}{%
  \raisebox{-0.1em}{\rlap{\rule[.05ex]{.4pt}{.45em}}%
  \kern-.0em%
  \rlap{\kern.0em\raisebox{0.45em}{\rotatebox{45}{\rule{.5657em}{.4pt}}}}%
  \rotatebox{45}{\rule{.5657em}{.4pt}} \kern-.37em%
  \raisebox{.4em}{\rule[.05ex]{.4pt}{.45em}}}}
\newcommand{\rpar}{%
  \raisebox{0.1em}{\rlap{\rule[.05ex]{.4pt}{.45em}}%
  \kern.0em%
  \rlap{\kern.0em\raisebox{0.45em}{\rule{.4em}{.4pt}}}%
  \rule{.4em}{.4pt}\kern-.03em%
  \rule[.05ex]{.4pt}{.45em}}}
\DeclarePairedDelimiter{\abs}{\lvert}{\rvert}
\newcommand\reallywidehat[1]{%
\savestack{\tmpbox}{\stretchto{%
  \scaleto{%
    \scalerel*[\widthof{\ensuremath{#1}}]{\kern-.6pt\bigwedge\kern-.6pt}%
    {\rule[-\textheight/2]{1ex}{\textheight}}
  }{\textheight}%
}{0.5ex}}%
\stackon[1pt]{#1}{\tmpbox}%
}
\DeclareSymbolFont{bbold}{U}{bbold}{m}{n}
\DeclareSymbolFontAlphabet{\mathbbold}{bbold}
\newcommand{\bbone}{\mathbbold{1}}
\renewcommand{\Re}{\operatorname{Re}}
\renewcommand{\Im}{\text{Im}}
\newcommand{\floor}[1]{\lfloor #1 \rfloor}
\newcommand{\tth}{^{th}}
\renewcommand{\Pr}{\mathbb{P}}
\newcommand{\ttau}{\tilde{\tau}}
\newcommand{\tF}{F^*}
\newcommand{\tcF}{\widetilde{\mathcal{F}}}
\newcommand{\opvol}{{\operatorname{vol}}}
\newcommand{\mvol}{{\operatorname{vol}}}
\begin{document}
\title{
Lozenge tilings and the Gaussian free field on a cylinder
}
\author{Andrew Ahn}
\address{(Andrew Ahn) Columbia University, Department of Mathematics, 2990 Broadway, New York, New York, 10027}

 \author{Marianna Russkikh}
 \address{(Marianna Russkikh) Massachusetts Institute of Technology, Department of Mathematics, 77 Massachusetts Avenue, Cambridge, Massachusetts, 02139}
 
 \author{Roger Van Peski}
 \address{(Roger Van Peski) Massachusetts Institute of Technology, Department of Mathematics, 77 Massachusetts Avenue, Cambridge, Massachusetts, 02139}


\begin{abstract}
We use the periodic Schur process, introduced in~\cite{Bor07}, to study the random height function of lozenge tilings (equivalently, dimers) on an infinite cylinder distributed under two variants of the $q^{\mvol}$ measure. Under the first variant, corresponding to random cylindric partitions, the height function converges to a deterministic limit shape and fluctuations around it are given by the Gaussian free field in the conformal structure predicted by the Kenyon-Okounkov conjecture. Under the second variant, corresponding to an unrestricted dimer model on the cylinder, the fluctuations are given by the same Gaussian free field with an additional discrete Gaussian shift component. Fluctuations of the latter type have been previously conjectured for dimer models on planar domains with holes.
\end{abstract}

\maketitle

\tableofcontents

\section{Introduction}\label{sec:intro}

\subsection{Background} \label{sec:background}

Depending on one's viewpoint, an ordinary plane partition is either 
\begin{enumerate}
    \item an infinite array $\pi := (n_{i,j})_{i,j \in \Z_{\ge 1}}$ of nonnegative integers, only finitely many nonzero, such that $n_{i,j}$ is weakly decreasing in $i$ and $j$ (\Cref{Fig:lots_of_cubes} top left);
    \item a collection of a finite number of unit cubes stacked in a convex manner in the corner of an infinite room, where the numbers $n_{i,j}$ correspond to the height of the stacks (\Cref{Fig:lots_of_cubes} middle left);
    \item a tiling of the triangular lattice by lozenges $\hloz,\lloz,\rloz$ such that in each $120^\circ$ quadrant, all but finitely many lozenges are of one of the three types (also \Cref{Fig:lots_of_cubes} middle left). These tilings may equivalently be viewed as perfect matchings of the hexagonal lattice with vertices corresponding to the triangles in the triangular lattice, in which case the lozenges correspond to edges (dimers) of the matching.
\end{enumerate}

The second interpretation defines a stepped surface, which is specified by a \emph{height function} with values given by the array in the first one, and it is natural to study the height functions of random plane partitions. For many natural sequences of probability measures on plane partitions, the limits of these functions have been shown to converge to certain entropy-maximizers~\cite{CKP01,KOS06,KO07}. 
For some specific choices of boundary conditions the fluctuations have been shown to converge to the \emph{Gaussian free field}, a $2$ dimensional conformally invariant random field (see e.g.~\cite{werner2020lecture}), in a suitable conformal structure, see e.g.~\cite{Ken08,BF14,Pet15,BuG18,berestycki2019dimer,berestycki2020dimers,huang2020height}. Such convergence was conjectured to hold for a broad class of dimer models in~\cite{KO07}.

\begin{figure}[h]
\begin{center}
\begin{tabular}{c c c c}
\rotatebox{90}{$\quad$partition}
&\includegraphics[scale=0.22]{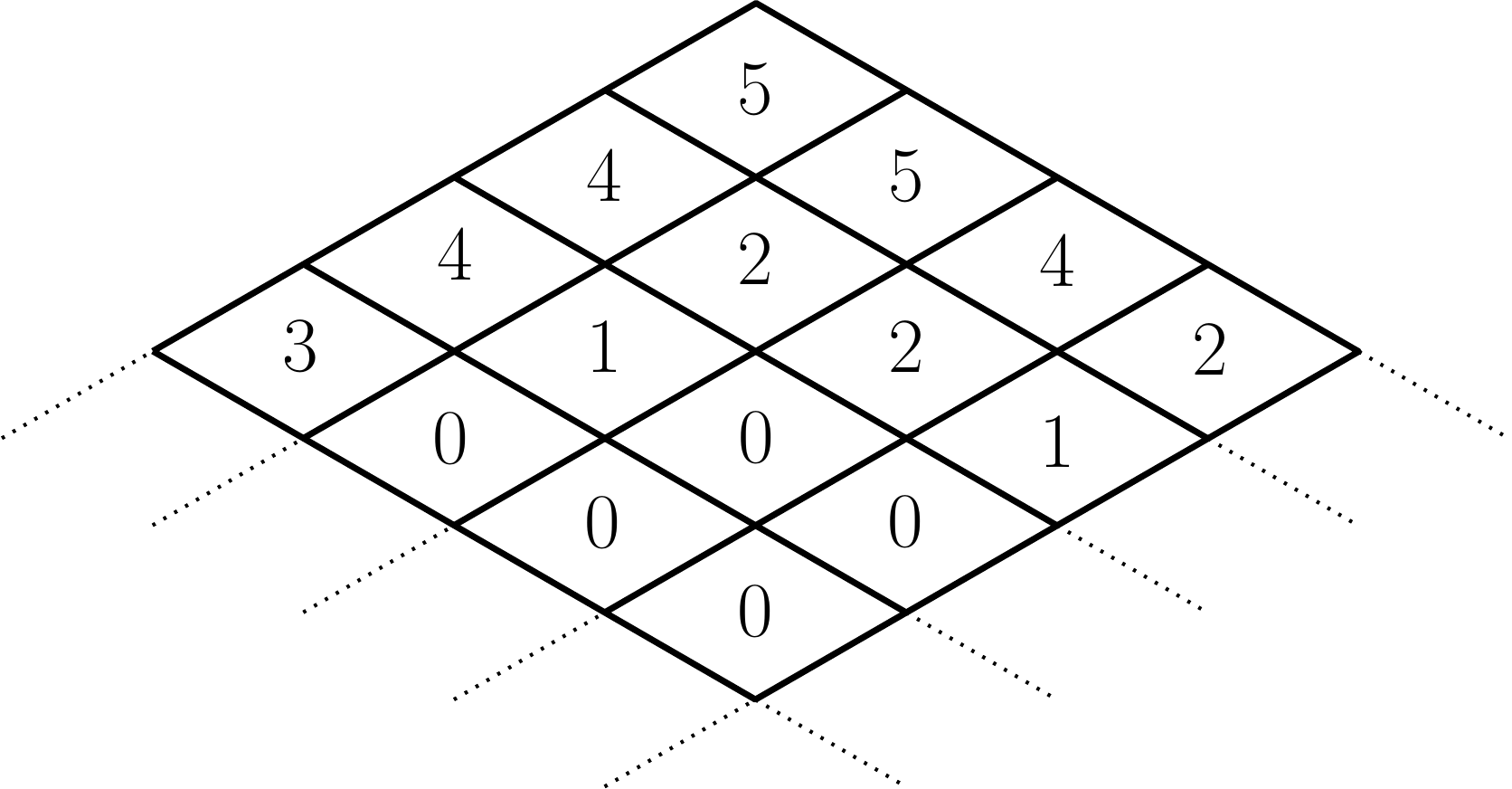}
$\quad\quad$
&\includegraphics[scale=0.22]{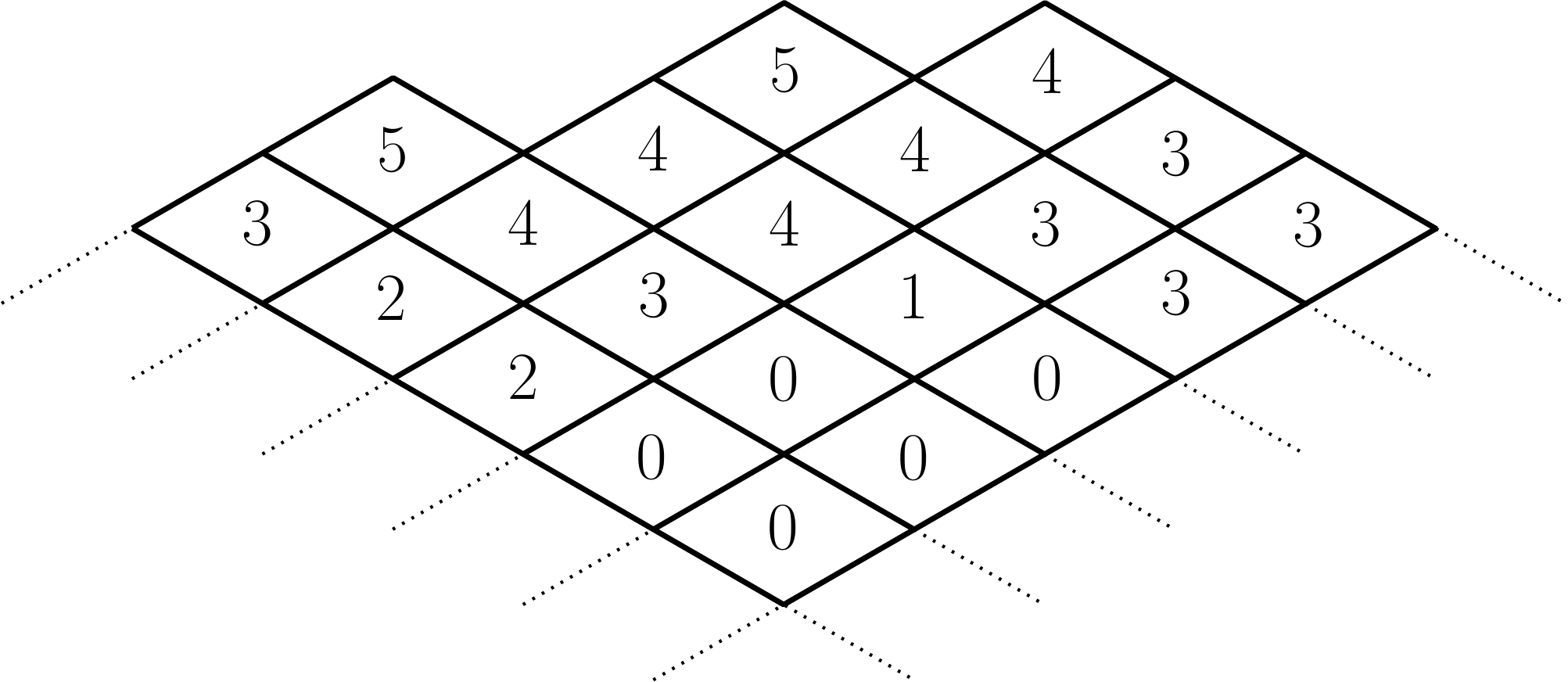}
$\quad\quad$
&\includegraphics[scale=0.22]{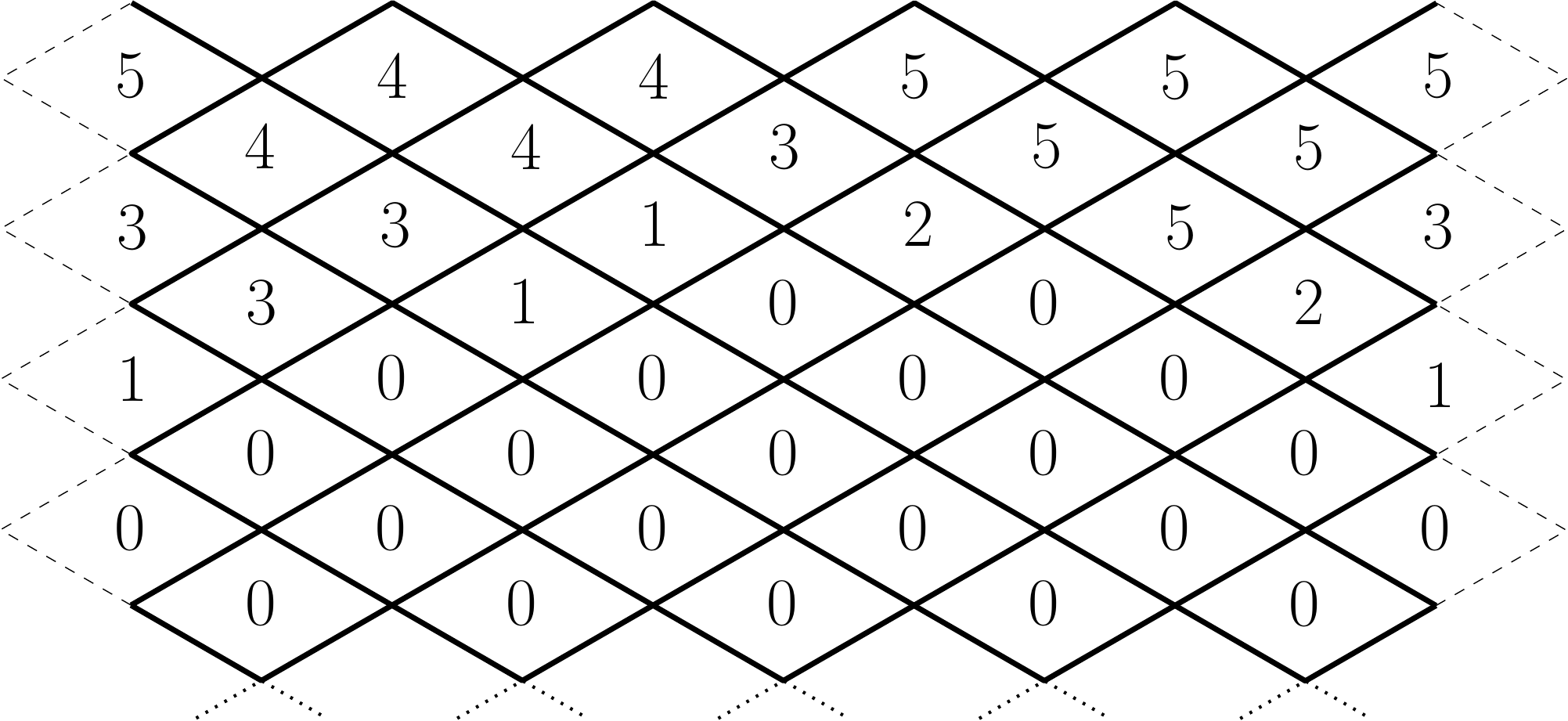}\\
\rotatebox{90}{$\quad\quad\quad\quad$lozenge tiling}
&\includegraphics[scale=0.22]{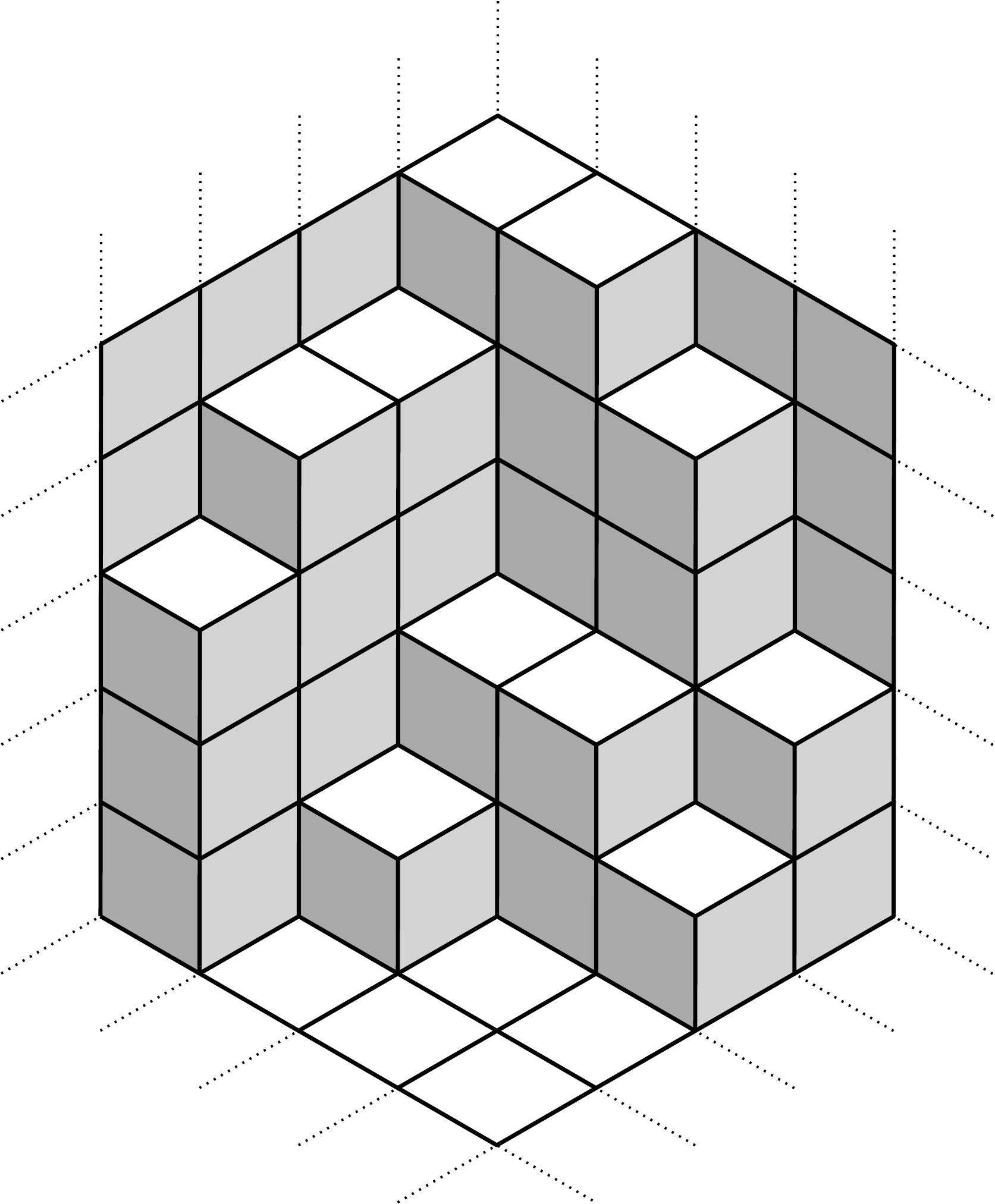}
$\quad\quad$
&\includegraphics[scale=0.22]{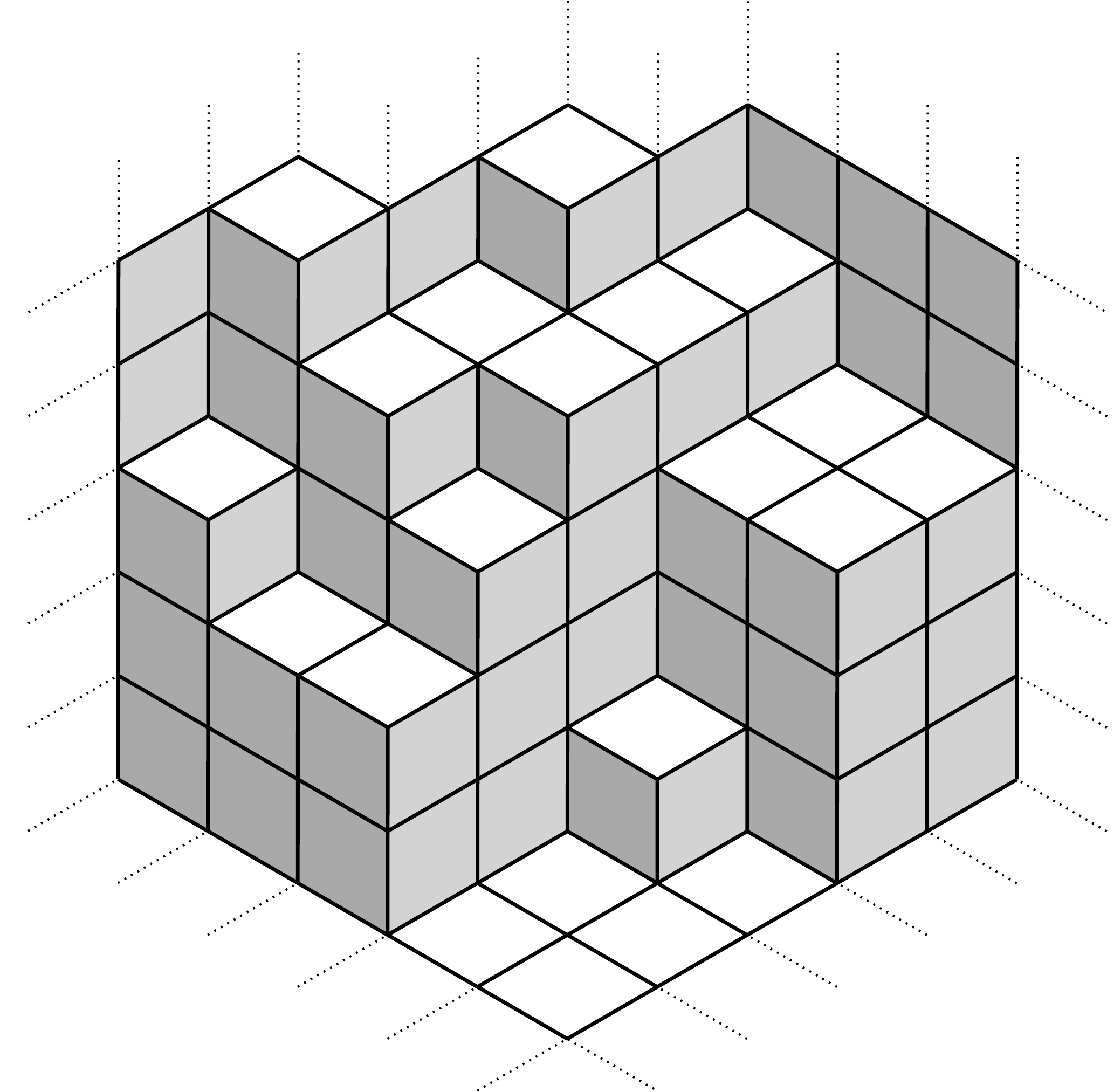}
$\quad\quad$
&\includegraphics[scale=0.22]{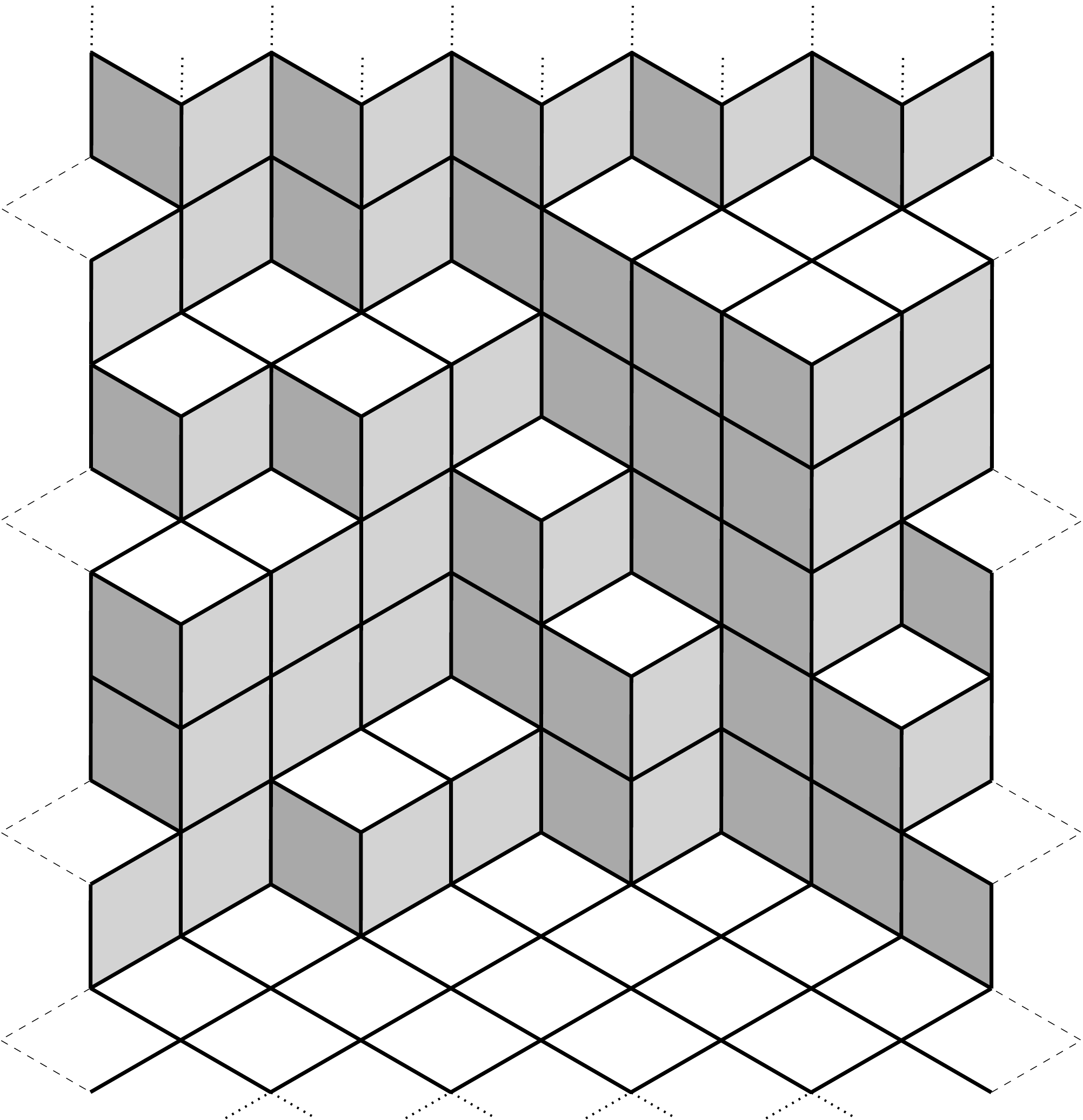} \\
\rotatebox{90}{$\quad\quad\quad\quad$empty room}
&\includegraphics[scale=0.22]{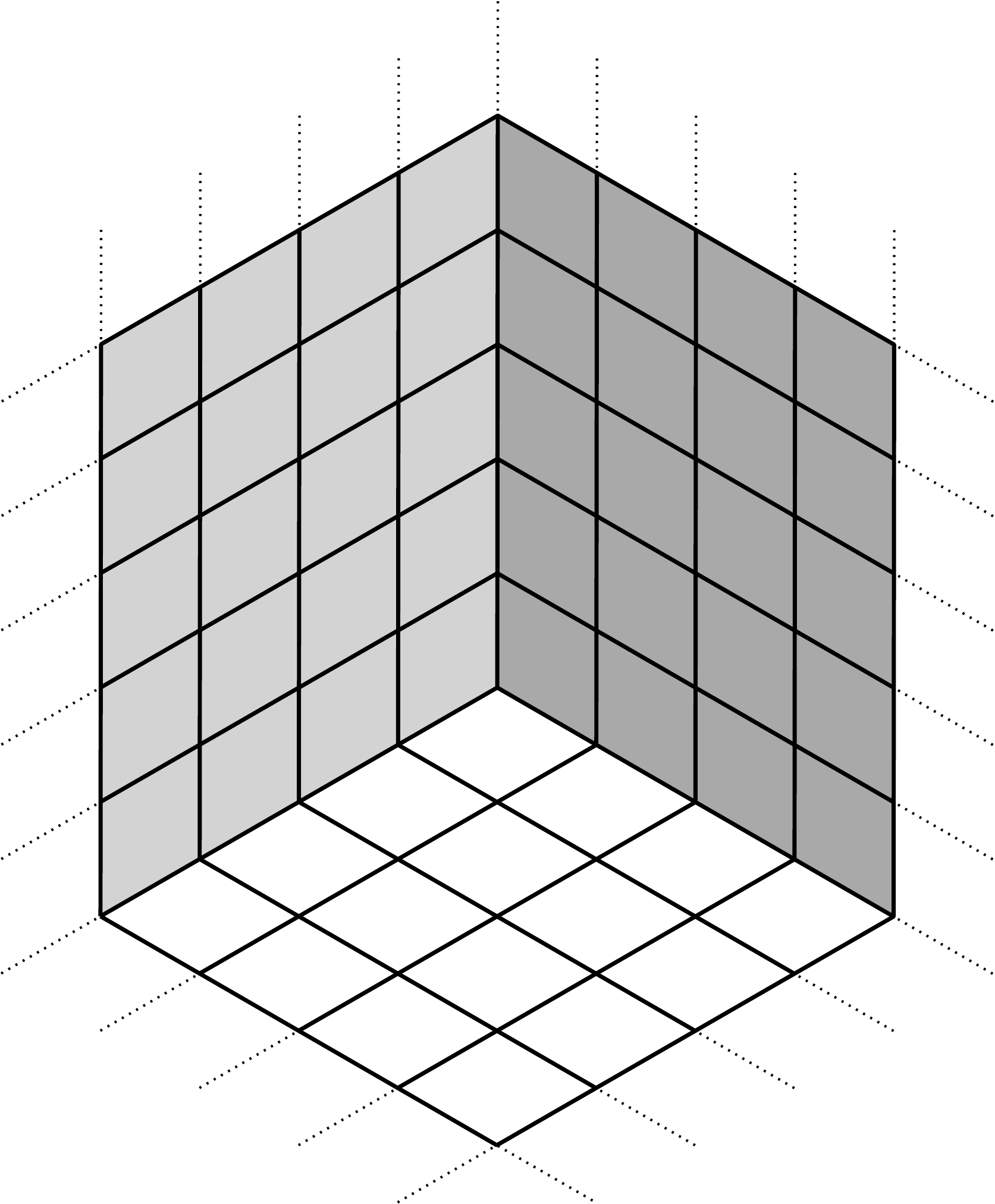}
$\quad\quad$
&\includegraphics[scale=0.22]{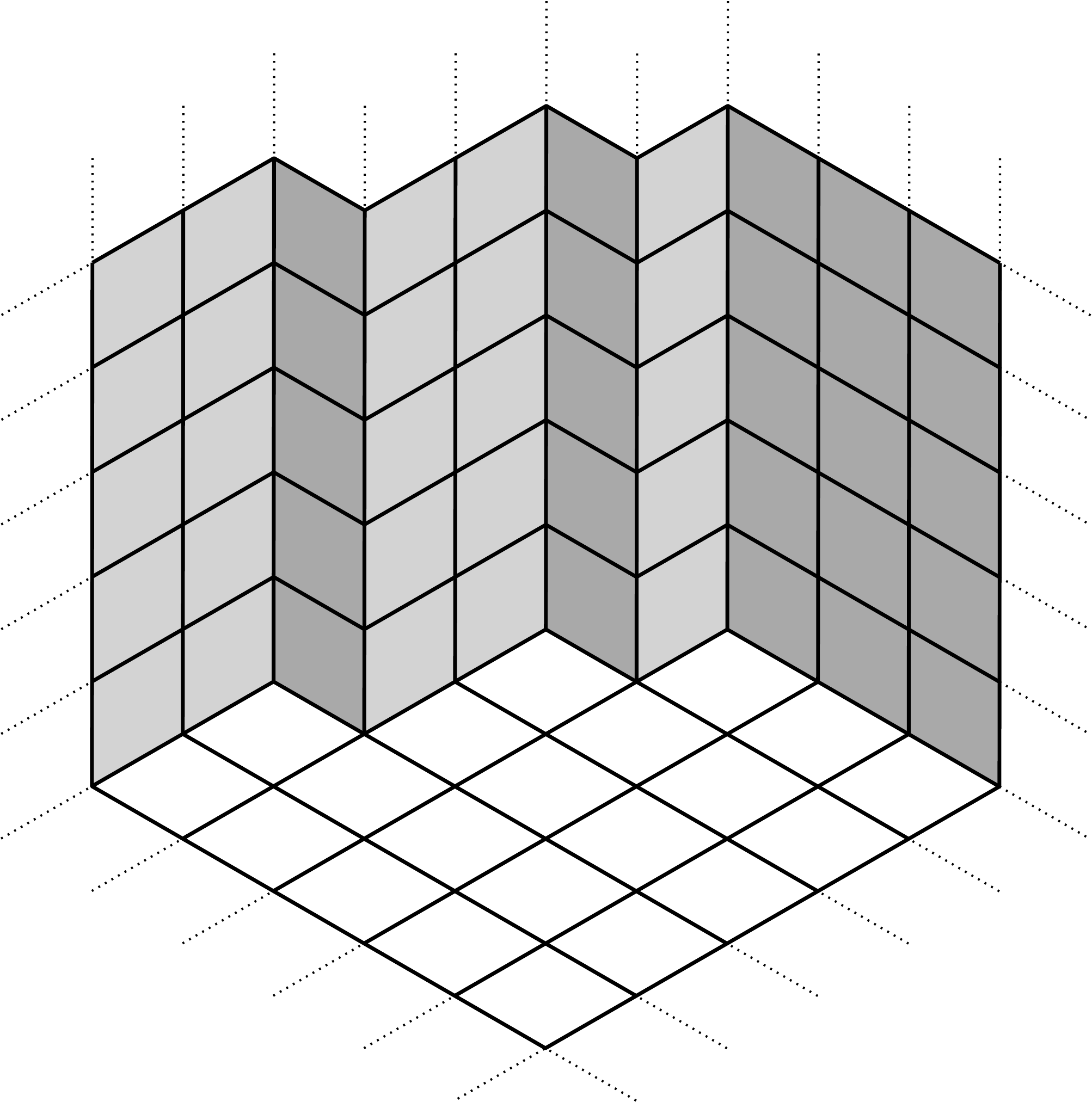}
$\quad\quad$
&\includegraphics[scale=0.22]{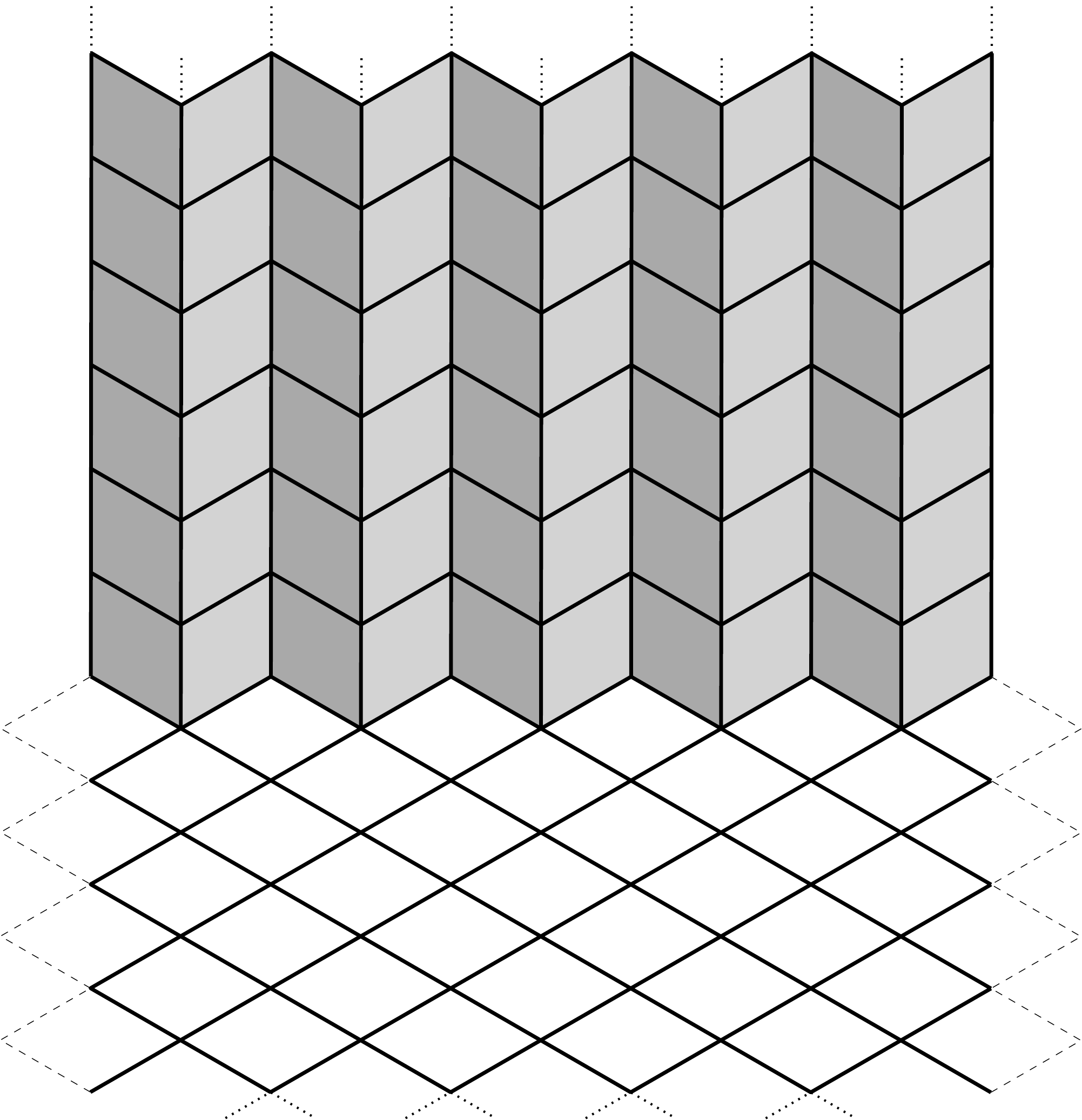} \\
\rotatebox{90}{ }
& Plane partition 
$\quad\quad$
& Skew plane partition  
$\quad\quad$
& Cylindric partition\\
\end{tabular}
\end{center}
 \caption{An example of a plane partition, a skew plane partition with a jagged wall, and a cylindric partition (first row from left to right), together with their stack of unit boxes representation (second row) in corresponding empty rooms (third row).}\label{Fig:lots_of_cubes}
\end{figure}


It is very natural to stack boxes in rooms with different geometries and consider the corresponding questions about limit shapes and fluctuations of the height function. For instance, one may consider a room with a jagged wall that has several corners, as in~\Cref{Fig:lots_of_cubes}, middle column; these are called \emph{skew plane partitions} and were considered in~\cite{OR07,BMRT12,Mkr11,Ahn20}. Taking liberties not only with the shape of the room but with the space it inhabits, one may also consider \emph{cylindric partitions}~\cite{gessel1997cylindric}, i.e. stacks of boxes in a room with a jagged wall and periodic boundary conditions, see \Cref{Fig:lots_of_cubes} right column. 
This gives rise to tilings of a multiply connected domain, topologically equivalent to an annulus. In this paper, we study the case of random cylindric partitions. We restrict our setting to cylindric partitions corresponding to a room with an alternating wall as in~\Cref{Fig:lots_of_cubes}, but more general choices of this `boundary profile' are accessible to the same methods, see~\cite{Bor07}.

Since the set of plane partitions in a given room is typically infinite, one cannot consider a uniform measure. One natural measure on the set of plane partitions $\pi$, with some choice of room, is the $q^{\mvol}$ measure, introduced for ordinary plane partitions in~\cite{vershik1996statistical}, which sets
\[ \PP(\pi) \propto q^{\mvol(\pi)}, \]
where $q \in (0,1)$ is fixed and $\mvol(\pi)$ is the sum of the numbers $n_{i,j}$, equivalently the actual volume of the stack of unit cubes in the corner. 
Here, $q$ is a soft volume constraint parameter which may be interpreted as a preference for plane partitions with volumes in a range determined by $q$. As $q$ approaches~$1$, the $q^{\mvol}$ measure tends to sample plane partitions with more volume. In the case of skew plane partitions, the limit shapes, local statistics and fluctuations of the $q^{\mvol}$ measure have been extensively studied~\cite{OR03,OR07,BMRT12,Ahn20}. Note that if the soft volume constraint is replaced with a strict one by considering uniformly random plane partitions with fixed volume, the limit shape does not change~\cite{CK01}.

A foundational paper~\cite{OR03} computed the local statistics for these measures by reinterpreting them as special cases of \emph{Schur processes}, a certain family of measures on sequences of integer partitions introduced in the same paper. Schur processes, later generalized to Macdonald processes in~\cite{BC14}, have proven to be a versatile tool for analyzing many probabilistic models. In particular, they were used in~\cite{Ahn20} to show that for the $q^\mvol$ measure and Macdonald process generalizations, fluctuations about the limit shape converge to the Gaussian free field. 

The Schur process admits another generalization, the \emph{periodic Schur process} introduced in~\cite{Bor07}, which allows measures on \emph{cylindric} plane partitions to be analyzed by similar techniques. This was used in~\cite{Bor07} and~\cite{betea2019periodic} to study the local bulk and edge fluctuations for cylindric partitions, but global fluctuations around the limit shape for any measure on cylindric partitions have so far not been studied.

In this paper we use the periodic Schur process machinery to establish the limit shape and Gaussian free field fluctuations for the $q^\mvol$ measure on cylindric partitions, in the conformal structure conjectured in~\cite{KO07}. There are actually two versions of the fluctuation result, with two different limit objects, because there are two natural versions of the $q^\mvol$ measure for cylindric partitions.  We introduce these next.

\subsection{Lozenge tilings and cylindric partitions}

Along with the periodic Schur process, another variant was introduced in~\cite{Bor07}, called the \emph{shift-mixed periodic Schur process}. The latter has the advantage that its correlation functions are determinantal. 
The usual periodic Schur process specializes to the $q^{\mvol}$ measure on cylindric partitions introduced above, while the shift-mixed version specializes to another measure on lozenge tilings of the cylinder, which we call the \emph{shift-mixed $q^{\mvol}$ measure}. 

To introduce this measure, consider the two lozenge tilings of the cylinder in \Cref{fig:shift_mixed}. The one on the right has the same tiles as the one on the left, but shifted down by $1$ unit. Viewing them as stacks of boxes in a cylindrical empty room, they define the same stack of boxes, but the stack on right is in a different room in which the boundary between wall and floor is shifted down by $1$ unit. Note that in the box-stacking viewpoint, one cannot transform one picture to the other by adding and removing any finite number of boxes.

The usual (unshifted) $q^{\mvol}$ measure is supported on cylindric partitions, while the shift-mixed $q^{\mvol}$ measure is supported on the larger set of all lozenge tilings of the cylinder such that all lozenges are of type $\hloz$ very far down, and all lozenges are of alternating types $\lloz,\rloz$ very far up. All such tilings are given by shifting a cylindric partition vertically by some integer $S$, as shown in \Cref{fig:shift_mixed} (right) for $S=-1$. The shift-mixed $q^{\mvol}$ measure assigns to each such lozenge tiling of the cylinder a probability proportional to 
\begin{equation}\label{eq:shift_probs}
    \left(u^S q^{N S^2}\right) q^\mvol,
\end{equation}
where $u>0$ is another parameter, $2N$ is the number of columns of the cylinder, and $S$ is the vertical shift of the wall-floor interface described above. The specific choice of weight $u^S q^{NS^2}$ modifying the $q^{\mvol}$ may at first appear ad hoc, but as mentioned it results in determinantal structure. In \Cref{Sec:Dimers} we show additionally that the shift-mixed measure comes from a dimer model with local edge weights, which may be viewed as the origin of this determinantal structure. A different but in some sense related explanation of the determinantal structure of the shift-mixed measure, in terms of a grand canonical ensemble of noninteracting fermions, is given in~\cite{betea2019periodic}.

\begin{figure}[ht]
    \centering
    \includegraphics[scale=0.8]{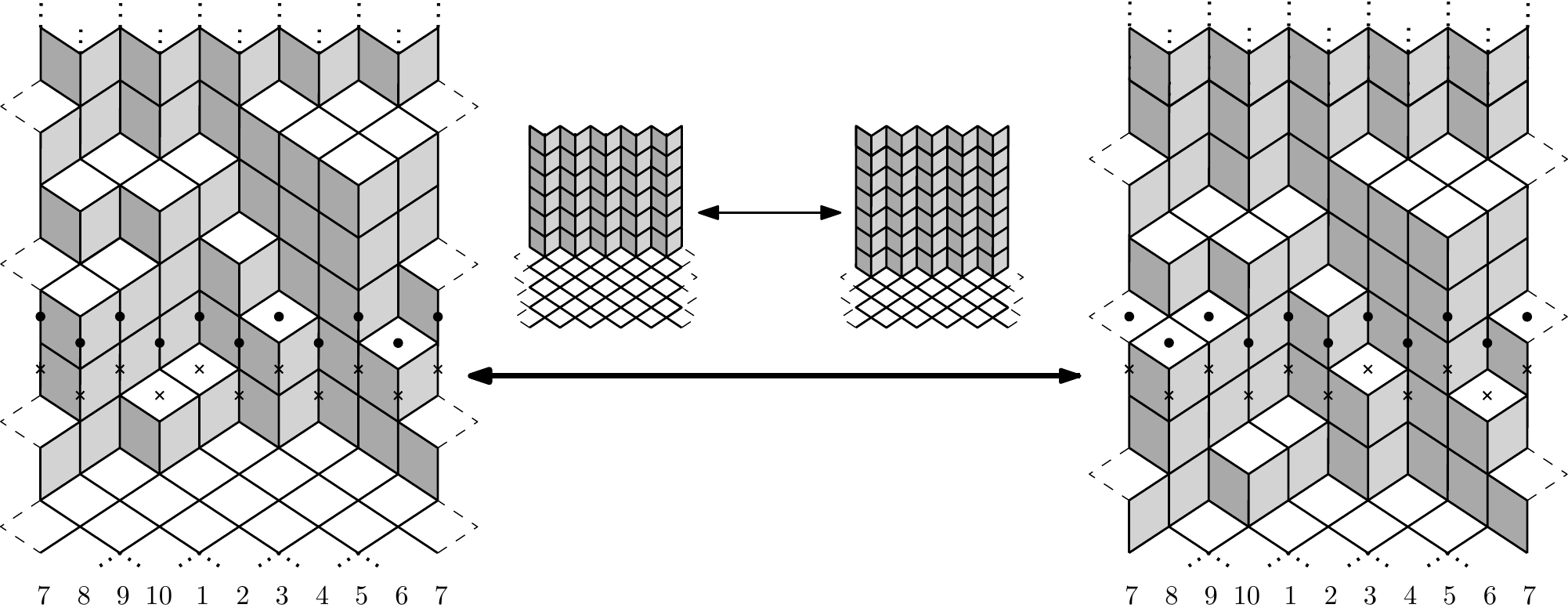}
    \caption{Two lozenge tilings of a cylinder which correspond to the same cylindric partition, but with shifts differing by $1$. For our coordinate system, the midpoint of each vertical edge of the underlying triangular lattice is given coordinates $(\tau^\sharp,y^\sharp),\tau^\sharp \in \{1,\ldots,2N\},$ $y^\sharp \in \Z+1/2$. The horizontal coordinate $\tau^\sharp$ is shown below the figure, and we indicate the vertical coordinate $y^\sharp$ by plotting $\bullet$ (resp. $\times$) at each point $(\tau^\sharp,1/2)$ (resp. $(\tau^\sharp,-1/2)$). The jagged horizontal coordinate is natural from the perspective of Schur processes, see \Cref{Sec:model}.}
    \label{fig:shift_mixed}
\end{figure}

In contrast, the unshifted measure does not correspond to a dimer model with local edge weights. We note that this feature of a random shift $S$ appears in the context of cylindric partitions because the interface between the wall and floor of the room is not a part of the `boundary' from the perspective of a tiling model. The shift does not appear for ordinary and skew plane partitions, because the wall-floor interface is `pinned' to the boundary at infinity.

As is typical in tiling/dimer models, our limit shape and fluctuation results will be expressed in terms of the \emph{height function} of a lozenge tiling of the cylinder. This is a function defined on lattice points of the cylinder, and its value at a point is the number of `absences of a horizontal lozenge' below this point, i.e.
\begin{equation} \label{eq:height_function_intro}
h(\tau^\sharp,y^\sharp) := \sum_{x^\sharp\in\mathbb{Z}+\tfrac{1}{2}: x^\sharp < y^\sharp} \bbone[\mbox{there is no lozenge of type $\hloz$ at $(\tau^\sharp,x^\sharp)$}],
\end{equation}
where the coordinates $(\tau^\sharp,x^\sharp)$ of horizontal lozenges are as described on~\Cref{fig:shift_mixed}. For a tiling of the cylinder satisfying our boundary conditions very far down and far up, the height function $h(\tau^\sharp,y^\sharp)$ vanishes for all sufficiently negative~$y^\sharp$ and $h(\tau^\sharp,y^\sharp) = y^\sharp + S + \tfrac{1}{2}$ for all sufficiently large positive~$y^\sharp$, for a shift~$S$ which depends on the tiling but not on the horizontal coordinate $\tau^\sharp$ (see \Cref{fig:loz_empt} later for an example). This~$S$ is exactly the vertical coordinate of the wall-floor interface alluded to earlier, and conditioning on $S=0$ yields the unshifted $q^{\mvol}$ measure. In particular, the behavior of $h$ at $\pm \infty$ is deterministic for the unshifted $q^{\mvol}$ measure, but has an additional random shift for the shift-mixed $q^{\mvol}$ measure.

\subsection{Main results}
\label{subsec:main_res}

Our main results give the height fluctuations for the unshifted and shift-mixed $q^\mvol$ measures. Before stating these, we give the limit shape, which was computed previously in~\cite[Proposition 7.1]{Bor07}. While concentration was not shown there, we expect our result could be deduced from that one without too much difficulty, though we give an independent argument in \Cref{sec:limit_shape}.

For this section fix $t\in (0, 1)$ and let $q := q(N) := t^{1/N}$. Denote by~$\hh$ the random height function (as in~\eqref{eq:height_function_intro}) 
associated to the $q^{\mvol}$ measure on the cylinder $\Z/(2N\Z) \times (\Z+1/2)$. 
Fix a parameter~$u \in \R_{>0}$ of the shift-mixed $q^{\mvol}$ measure~\eqref{eq:shift_probs}, and let ~$\hs$ denote the random height function associated to the corresponding shift-mixed model, as defined in the previous subsection.


We show that the rescaled random height function $\hh$ converges to an explicit deterministic function on the cylinder as $N$ tends to infinity ($q = t^{1/N}$ tends to $1$). Due to the rotational invariance (in distribution) of our model, the limiting height function must be invariant under rotation. Thus the limiting height function is determined by its values along vertical slices where we fix the coordinate corresponding to the periodic direction. 

\begin{definition}\label{def:limit_shape}
Define a function $\cH: \R \to \R$ by 
\[ \cH'(y) = \frac{2\arctan\left(\sqrt{4 t^{-2y} - 1}\right)}{\pi} \bbone(0 < t^y < 2) 
\quad\text{and}\quad
 \lim_{y \to -\infty} \cH(y) = 0. \]
\end{definition}

We use coordinates $(\tau, y) \in (0,1] \times \R$ for points on the cylinder by identifying $(0,1]$ with the circle, and typically use superscripts $\sharp$ on $\tau$ and $y$ as in the previous subsection for the discrete prelimit coordinates.

\begin{theorem}\label{thm:limit_shape_intro}
Let $t \in (0,1)$, $\tau \in (0,1]$ a horizontal coordinate on the cylinder, and $\cH$ be as above. Then the height function~$\hh$ defined above converges in probability to $\cH$ uniformly, i.e. 
\begin{align*} 
\lim_{N\to\infty} \PP\left( \sup_{y \in \frac{1}{2N} \Z} \left| \frac{1}{2N} \hh(\lfloor 2N \tau \rfloor, 2N y) - \cH(y) \right| \ge \e \right) = 0
\end{align*}
for each $\e > 0$.
\end{theorem}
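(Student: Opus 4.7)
The plan is to route everything through the shift-mixed $q^{\mvol}$ measure, whose horizontal-lozenge positions form a determinantal point process with an explicit correlation kernel $K_N$ from~\cite{Bor07}, and then transfer the limit shape from $\hs$ to $\hh$. The starting identity is
\[ \E\bigl[\hs(\tau^\sharp, y^\sharp)\bigr] = \sum_{x^\sharp < y^\sharp}\bigl(1-K_N(\tau^\sharp,x^\sharp;\tau^\sharp,x^\sharp)\bigr), \]
so both the expectation and the variance of the height function are expressible in terms of $K_N$.

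The first step is a classical steepest descent analysis of the double contour integral for $K_N(\tau^\sharp,x^\sharp;\tau^\sharp,x^\sharp)$ under the scaling $q=t^{1/N}$, $x^\sharp=2Ny$. In the liquid region $\{y:0<t^y<2\}$, the saddle-point equation has a complex conjugate pair of critical points on the unit circle, and a residue-plus-saddle computation should identify the limiting one-point density of horizontal lozenges as $1-\cH'(y)=1-\frac{2\arctan(\sqrt{4t^{-2y}-1})}{\pi}$. In the two frozen regions, the saddles collapse onto the real axis and direct asymptotics give density $1$ (when $t^y\ge 2$) or density $0$ (as $y\to+\infty$), matching $\cH'\equiv 0$ and $\cH'\equiv 1$ respectively. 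A Riemann-sum argument in $y^\sharp$ then yields
\[ \tfrac{1}{2N}\,\E\bigl[\hs(\lfloor 2N\tau\rfloor,\lfloor 2Ny\rfloor)\bigr] \longrightarrow \cH(y) \]
pointwise in $y$, with no dependence on $\tau$ by rotational invariance of the model.

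Next, concentration: the variance of a one-dimensional linear statistic of a determinantal process equals $\sum_{x<y^\sharp}K_N(x,x)-\sum_{x,z<y^\sharp}|K_N(x,z)|^2$, and the off-diagonal decay of $K_N$ coming from the same steepest-descent input should bound this by $O((\log N)^2)$, as in analogous Schur-process settings. Dividing by $(2N)^2$ and applying Chebyshev delivers pointwise convergence in probability. To upgrade to uniform convergence in $y$, I would exploit that $\hs(\tau^\sharp,\cdot)$ is monotone nondecreasing while $\cH$ is continuous and nondecreasing: squeezing the rescaled height function at an arbitrary $y$ between its values on a fine $\e$-grid gives a Glivenko--Cantelli-type bound, and uniformity in $\tau$ is automatic by rotational invariance. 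Finally, under the shift-mixed measure the shift $S$ has the $N$-independent distribution proportional to $u^S t^{S^2}$ (since $q^{NS^2}=t^{S^2}$), so $\Pr(S=0)$ is bounded below independently of $N$; conditioning on $\{S=0\}$ recovers the unshifted $q^{\mvol}$ measure, and convergence in probability is preserved under conditioning on events of uniformly positive probability, giving the stated convergence for $\hh$.

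The main obstacle is the first step. In the periodic setting the correlation kernel contains theta-function factors, so the integrand of the double contour integral is not meromorphic with a small number of poles but carries a doubly-periodic structure; locating the saddle points and choosing deformable contours through them while controlling the theta contributions --- and handling the coalescence of the two saddles at the arctic boundary $t^y=2$ --- is the most delicate piece. The variance bound and the shift-to-unshifted transfer are comparatively routine once the kernel asymptotics and off-diagonal decay are in hand.
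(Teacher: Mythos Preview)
Your outline is viable and takes a genuinely different route from the paper. The paper never touches steepest descent on the correlation kernel for this theorem; instead it derives contour-integral formulas for the exponential observables $\cF_r(\lambda^{(\tau)})$ of the \emph{unshifted} process (Corollary~4.4), reads off the first moment (Proposition~5.2) and the $O(1)$ covariance (Proposition~5.3) directly from those formulas, matches the moments against $\int \cH(y)t^{ky}\,dy$ via a Wallis-integral identity, and then upgrades moment convergence to pointwise and uniform convergence using Lipschitz/monotonicity arguments (Proposition~6.2, Steps~3--5). In particular, the paper works with the unshifted measure throughout and never passes through the shift-mixed process or a conditioning argument.

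Your kernel-based plan is the ``classical'' alternative, and the paper explicitly acknowledges it: the one-point density you are after is exactly \cite[Proposition~7.1]{Bor07}, and the authors write that concentration ``could be deduced from that one without too much difficulty, though we give an independent argument.'' So the piece you flag as the main obstacle---the theta-function steepest descent for $K_N(\tau^\sharp,x^\sharp;\tau^\sharp,x^\sharp)$---is already in the literature and you should cite it rather than redo it. What remains is (i) a variance bound for the linear statistic, which for a determinantal process follows from off-diagonal decay of $K_N$ (your displayed formula is slightly off: the diagonal term is $\sum_x K(x,x)(1-K(x,x))$ and the off-diagonal term is $\sum_{x\neq z}K(x,z)K(z,x)$, and you should not assume Hermiticity of the shift-mixed kernel at equal times); (ii) care with the infinite Riemann sum $\frac{1}{2N}\sum_{x^\sharp<2Ny}(1-K_N)$, which requires a tail estimate in the frozen region $t^{x^\sharp/2N}\ge 2$; and (iii) the monotonicity upgrade and the conditioning on $\{S=0\}$, both of which are routine as you say (the $S$-distribution is indeed $N$-independent, with $\Pr(S=x)\propto u^x t^{x^2/2}$). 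For the limit shape alone your route is arguably more direct; the paper's observable method is chosen because it scales cleanly to the GFF fluctuation theorems, where the kernel approach becomes substantially heavier.
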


\begin{remark}
  As already mentioned, the limit shape in \Cref{thm:limit_shape_intro} was computed in~\cite[Proposition~7.1]{Bor07}. In the notation~\cite{Bor07}, the asymptotic density of horizontal lozenges $\rho(\gamma)$ corresponds to our limit shape via
\[ \cH'(y) = 1 - \rho(\gamma) \]
with $t^y = e^{-\gamma}$. In particular, it is shown that
\[ \rho(\gamma) = \begin{cases}
\frac{2}{\pi} \arcsin\left( \frac{e^{-\gamma}}{2} \right) & \gamma \ge -\log 2 \\
1 & \gamma < - \log 2
\end{cases} \]
from which the proposed relation between $\cH'(y)$ and $\rho(\gamma)$ can be readily checked. 
\end{remark}

\begin{remark}\label{rmk:same_limit_shape}
The shift-mixed $q^{\mvol}$ measure has the same limit shape above, as the distribution of the shift is independent of the tiling and is finite-order independent of $N$.
\end{remark}

Our first main result is about the fluctuations of the random height function. To state this result, we introduce the region on which the fluctuations are nontrivial.

Let $p_{\hloz} := p_{\hloz}(\tau,y)$, $p_{\lloz} := p_{\lloz}(\tau,y), p_{\rloz} := p_{\rloz}(\tau,y)$ denote the asymptotic proportions of the lozenges of types $\hloz,\lloz,\rloz$ respectively in a neighborhood of size $o(N)$ around $(\tau,y)$. In~\Cref{Sec:Dimers} we interpret our model as a dimer model, and so the following definition of the liquid region coincides with that of~\cite{KOS06} in their classification of phases of dimer models.

\begin{definition} \label{def:liquid}
The \emph{liquid region} $\sL$ is the subset of $(\tau,y) \in (0,1]\times \R$ where $p_{\hloz}, p_{\lloz}, p_{\rloz}$ are all positive.
\end{definition}

Due to the symmetry in our model, we have $p_{\lloz} = p_{\rloz}$. The following relation holds
\[ \cH'(y) = 1 - p_{\hloz} \]
by \Cref{thm:limit_shape_intro} and because the vertical increments $h(\tau^\sharp,y^\sharp+1) - h(\tau^\sharp,y^\sharp)$ of the height function are $0$ exactly when there is a lozenge of type $\hloz$ at $(\tau^\sharp,y^\sharp)$ by~\eqref{eq:height_function_intro}. From \Cref{def:limit_shape}, we obtain the following description for the liquid region:
\begin{align}\label{eq:def_Liquid}
    \sL = \{(\tau,y) \in (0,1] \times \R: 0 < t^{2y} < 4 \} = \{(\tau,y) \in (0,1] \times \R: y > \tfrac{\log 2}{\log t} \}.
\end{align} 
To describe the limiting Gaussian free field on $\sL$, we map it onto the finite cylinder $\cC = (0,\tfrac{1}{2}) \times \R/\tfrac{|\log t|}{2\pi} \Z$, which we identify with its fundamental domain $(0,\frac{1}{2}) + \tfrac{|\log t|}{2\pi} \bi (0,1]$ in $\C$. This map is given by
\begin{equation}\label{eq:eta_intro}
    \eta(\tau,y) = \frac{1}{2\pi\bi} \log\left(t^\tau \frac{2 - t^{2 y} + \bi \sqrt{4 t^{2y} - t^{4y}}}{2}\right),
\end{equation}
where we choose the branch of the logarithm so that the imaginary part is in $(0,\pi]$. It is a natural identification of $\sL$ with $\cC$: for example, we note that $\eta$ takes vertical segments $\{\tau\} \times (\tfrac{\log 2}{\log t},\infty)$ in $\sL$ to the horizontal segment $(0,\frac{1}{2}) + \tfrac{\tau|\log t|}{2\pi} \bi \subset \cC$. 

We now briefly define the Gaussian free field on $\cC$; further details can be found in \Cref{GFF_def}. The Gaussian free field $\Phi$ on $\cC$ with $0$-Dirichlet boundary conditions is the random measure identified by the property that for any smooth test functions $\varphi_1,\ldots,\varphi_k$ on $\cC$, the random vector $(\langle \Phi, \varphi_1 \rangle,\ldots,\langle \Phi, \varphi_k \rangle)$ is a centered Gaussian vector with covariances
\[ \cov\left( \langle \Phi, \varphi_i \rangle, \langle \Phi, \varphi_j \rangle \right) = \int_{\cC} |dz|^2 \int_{\cC} |dw|^2 \varphi_i(z) \varphi_j(w) G(z,w) , \quad \quad i,j = 1,\ldots,k. \]
Here
\[ G(z,w) = -\frac{1}{2\pi} \log\left| \frac{\Theta(z - w\,|\,\omega)}{\Theta(z + \overline{w}\,|\,\omega)} \right| \]
is the Green's kernel for the Laplace operator on $\cC$ with $0$-Dirichlet boundary conditions, where
\[
\Theta(\eta\,|\,\omega) := \sum_{m=-\infty}^\infty (-1)^m e^{2 \pi \bi \left((m+1/2) \eta + \frac{m(m+1)}{2} \omega\right)}
\]
is the Jacobi theta function, and $\omega$ is defined by $t = e^{2\pi\bi \omega}$, see \Cref{sec:GFF_and_Green} for further details.

\begin{theorem}\label{thm:unshifted_gff_convergence_intro}
Fix $t \in (0,1)$. Then the normalized height function of the unshifted $q^{\mvol}$ measure converges as $N \to \infty$ to the $\eta$-pullback of the Gaussian free field $\Phi$ on the cylinder $\cC = (0,\tfrac{1}{2}) \times \R/\tfrac{|\log t|}{2\pi}\Z$ with $0$-Dirichlet boundary conditions, where $\eta: \sL \to \cC$ is given in~\eqref{eq:eta_intro}.

More precisely, for any $\tau_1,\ldots,\tau_n \in (0,1]$ and integers $k_1,\ldots,k_n > 0$, the random vector
\begin{align} \label{eq:prelimit_vector}
\left( \frac{1}{2N} \sum_{y \in \frac{1}{2N} (\Z+\frac{1}{2})} \Bigg(\hh(\floor{2N\tau_i},2Ny) - \E[\hh(\floor{2N\tau_i},2Ny)]\Bigg) t^{k_iy} \right)_{1 \le i \le n}
\end{align}
converges in distribution to the Gaussian random vector
\[ \left( \frac{1}{\sqrt{\pi}}\int_{\frac{\log 2}{\log t}}^\infty \Phi(\eta(\tau_i,y)) t^{k_iy} \, dy \right)_{1 \le i \le n} \]
as $N\to\infty$.
\end{theorem}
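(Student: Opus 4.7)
The plan is a moment/cumulant method, exploiting the determinantal structure of the shift-mixed periodic Schur process together with the fact that a Gaussian random vector is determined by its moments. It suffices to show that the joint cumulants of order $\geq 3$ of the prelimit vector~\eqref{eq:prelimit_vector} vanish in the $N\to\infty$ limit, and that the second cumulants converge to the GFF pairings on the right-hand side. First I would pass from the unshifted $q^{\mvol}$ measure to the shift-mixed $q^{\mvol}$ measure~\eqref{eq:shift_probs}, whose correlations are determinantal by~\cite{Bor07}, whereas the unshifted ones are not. Since the unshifted measure is precisely the shift-mixed measure conditioned on $S=0$, and since $S$ has an $O(1)$-in-$N$ distribution that decouples asymptotically from the interior statistics modulo a rigid linear functional, a fluctuation theorem proved under the shift-mixed measure will imply the unshifted one after verifying that the ``discrete Gaussian shift'' component of the limiting covariance vanishes upon conditioning on $S=0$.

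Under the shift-mixed measure, the horizontal-lozenge positions form a determinantal point process whose correlation kernel $K^{\mathrm{sh}}$ admits an explicit double-contour-integral representation involving the Jacobi theta function, reflecting the cylinder's periodicity. By summation by parts, each entry of~\eqref{eq:prelimit_vector} is a centered linear statistic in this determinantal process with weight proportional to $t^{k_i y}$. The joint $k$-th cumulants then take the standard determinantal form $\sum_{\sigma} \Tr(K^{\mathrm{sh}} M_{\sigma(1)} \cdots K^{\mathrm{sh}} M_{\sigma(k)})$ over cyclic permutations, where the $M_j$ are multiplication operators by the test functions. Substituting the contour representation of $K^{\mathrm{sh}}$ along with $q = t^{1/N}$ produces a $k$-fold contour integral whose integrand has a dominant exponential factor of the form $\exp\bigl(N\,\Psi(z_1,\ldots,z_k)\bigr)$.

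Steepest-descent analysis of these contour integrals is the technical heart of the argument. The single-variable saddle equation for $\Psi$ has, for each $(\tau,y)\in\sL$, a pair of conjugate critical points parametrized precisely by the map $\eta$ of~\eqref{eq:eta_intro}; this is how the liquid region and the cylinder $\cC$ emerge from the algebra. For $k\geq 3$, after deforming contours to steepest-descent paths, the standard Schur-process identity summing over orderings of the $z_i$ produces cancellations giving a vanishing cumulant (as in~\cite{Ahn20,BuG18}). For $k=2$, the surviving contribution is a double contour integral localized near conjugate saddle pairs which, using the quasi-periodicity of the theta function, reassembles into the alternating series defining $\Theta(z-w\mid\omega)$ and $\Theta(z+\overline{w}\mid\omega)$, reproducing the Green's kernel $G(\eta(\tau_i,y_1),\eta(\tau_j,y_2))$. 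Integrating against $t^{k_i y_1} t^{k_j y_2}$ and collecting the $1/\pi$ normalization recovers the stated covariance.

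The main obstacle is the explicit residue and series-summation step inside the $k=2$ computation: matching the $K^{\mathrm{sh}} \otimes K^{\mathrm{sh}}$ double integral to the theta-function Green's kernel requires tracking the branch of the logarithm in~\eqref{eq:eta_intro} (the choice of imaginary part in $(0,\pi]$ is what enforces the Dirichlet boundary condition via the reflection $w\mapsto -\overline{w}$) and invoking a nontrivial Jacobi theta identity that collapses the double integral into a ratio of theta values. The secondary obstacle is the shift-mixed-to-unshifted reduction, where one must verify that the coupling between $S$ and each linear statistic becomes a deterministic linear functional of $\tau_i$ in the limit, so that conditioning on $S=0$ produces exactly the cancellation of the discrete-Gaussian covariance correction.
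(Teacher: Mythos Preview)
Your proposal outlines a genuinely different route from the one the paper takes, and there are two points where it would run into trouble.

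\textbf{Direction of the shift-mixed/unshifted reduction.} You propose to prove the fluctuation result first for the shift-mixed measure (which is determinantal) and then deduce the unshifted result by ``conditioning on $S=0$.'' The paper does exactly the opposite. It uses the determinantal structure of the shift-mixed process only once, to write down an exact $n$-fold contour-integral formula for joint moments of the observables $\cF_{r_i}(\lambda^{(\tau_i)})$ (Proposition~\ref{thm:shifted_observable_formula}); it then extracts the corresponding formula for the \emph{unshifted} process exactly, by taking the $u^0$ coefficient (Corollary~\ref{thm:observable_formula}), and all asymptotic analysis is performed on the unshifted formula. The shift-mixed fluctuation theorem is then deduced \emph{from} the unshifted one by adding back the independent shift (Proposition~\ref{thm:shift-mixed_fluctuations}). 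Your direction is problematic as stated: weak convergence of the mixture does not give weak convergence of the conditional laws, and ``verifying that the discrete Gaussian covariance correction vanishes upon conditioning'' is not a well-defined operation at the level of limits without an additional uniformity or tightness argument that you have not supplied.

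\textbf{Nature of the asymptotic analysis.} You anticipate a steepest-descent analysis of integrands with a dominant factor $\exp(N\Psi(z_1,\ldots,z_k))$. The paper avoids this entirely. Its key device is to specialize the parameters to $r_i = q^{2k_i}$, which makes the ratio $F(\tau_i,z_i)/F(\tau_i,r_i^{-1}z_i)$ a \emph{finite} product (see~\eqref{eq:F_quotient}) and hence entire and uniformly bounded in $N$ on fixed contours. Consequently the integrands in Corollary~\ref{thm:analytic_r_moments} have no exponential growth in $N$; the asymptotics come from an elementary Taylor expansion of the theta cross-term (Lemma~\ref{thm:theta-1}), which contributes $O(N^{-2})$ per pair, and the vanishing of cumulants of order $m\geq 3$ is obtained by a M\"obius/connected-graph expansion (Proposition~\ref{thm:cumulant_asymptotics}) rather than by cancellations among saddle-point contributions. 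The covariance computation (Proposition~\ref{thm:covariance_asymptotics} and the proof in \S\ref{sec:main_proofs}) is then a change of variables $\eta_i = \eta(\tau_i,y_i)$ and an integration by parts, not a residue/saddle matching. The authors note in \S1.4 that they first attempted the correlation-kernel route you describe and found it considerably more technical precisely because of the discrete shift component; your sketch does not address those difficulties.
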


\begin{remark}\label{rmk:gff_slices}
We note that the Gaussian free field is defined as a distribution on a $2$-dimensional domain, and refer to \Cref{GFF_def} for an explanation of the meaning of integrating it along $1$-dimensional slices. Convergence to the Gaussian free field in the sense of $1$-dimensional slices was also considered previously, for example for tiling and random matrix models, see e.g.~\cite{borodin2014clt1,borodin2014clt2,BuG18}. We also note the similar setting of integrating the Gaussian free field over circular arcs was considered in~\cite{duplantier2011liouville}. In \Cref{GFF_def} we give more detail on the Gaussian free field, in particular why the above statement can be interpreted as convergence of the height function to it on a certain space of test function. 
\end{remark}

It was conjectured in~\cite{KO07} that the fluctuations of the height function of a planar bipartite biperiodic dimer model (which includes the case of tiling models) on a simply connected domain converge to the Gaussian free field on the liquid region with respect to an explicit conformal structure in terms of the limit shape. Previously, the conjecture has been shown for many cases of simply connected domains for lozenge tilings (e.g.~\cite{Ken08,BF14,Pet15,BuG18,berestycki2020dimers,huang2020height}) and domino tilings (e.g.~\cite{Ken01,Rus18,Rus20,bufetov2018asymptotics}). In~\cite{CLR20, chelkak2021bipartite}, Gaussian free field fluctuations are shown for a general family of planar bipartite graphs on simply connected domains, not necessarily regular as in~\cite{KO07}, where the conformal structure is given in terms of a new class of embeddings.

For our multiply connected setting, \Cref{thm:unshifted_gff_convergence_intro} establishes that the limiting height fluctuations of the unshifted $q^{\mvol}$ model are given by the Gaussian free field on the liquid region~$\sL$. Furthermore, we check in \Cref{sec:conf_str} that the conformal structure on $\sL$ is precisely the conformal structure predicted in~\cite{KO07}, verifying the natural extension of their conjecture to our domain. We mention also the analogous result of~\cite{BuG18} on Gaussian free field convergence for uniform lozenge tilings of a hexagonal domain with a hole (which like the cylinder is topologically equivalent to annulus), conditioned such that the height function takes a fixed value at the hole. In \Cref{sec:hol_hex} we will discuss more thoroughly the analogy between our results and those of~\cite{BuG18}.

Our final main result establishes the convergence of the height fluctuations of the shift-mixed $q^{\mvol}$ measure. For the shift-mixed model, there is an additional component to the height fluctuations given by a discrete Gaussian random variable; these were introduced in~\cite{lisman1972note}, see also~\cite{agostini2019discrete,kemp1997characterizations,szablowski2001discrete}.

\begin{definition}\label{def:disc_gauss}
A \emph{discrete Gaussian} $S \sim \cN_{\operatorname{discrete}}(m,C)$ is the $\Z$-valued random variable defined by 
\[
\Pr(S = x) \propto e^{-C(x-m)^2},
\]
where $m \in \R, C \in \R_{>0}$.
\end{definition} 

The parameters $m,C$ in the above definition are related to the mean and variance but do not correspond to them exactly as they do for usual Gaussians. The appearance of discrete Gaussian shifts is quite natural, as the probabilities 
\[
\frac{1}{Z}\left(u^S q^{N S^2/2}\right) q^\mvol
\]
of a shift-mixed $q^\mvol$ cylindric partition may be interpreted as giving an unshifted $q^\mvol$ plane partition, together with an independent random shift $S \in \Z$ with $\Pr(S=x) = u^x t^{x^2/2}$, i.e. 
\begin{equation}\label{eq:specific_disc_gaus}
  S \sim \cN_{\operatorname{discrete}}\left(\frac{\log u}{\log t}, \frac{|\log t|}{2}\right).  
\end{equation}

\begin{theorem}\label{thm:shifted_gff_intro}
Fix $u \in \R_{>0}$ and $t\in(0, 1)$, set $q := q(N) := t^{1/N}$. Then the normalized height function~$\hs$ of the shift-mixed $q^{\mvol}$ measure converges to the $\eta$-pullback of the Gaussian free field with a discrete Gaussian shift,
\[ \hs(\floor{2N\tau},2Ny) - \E[\hh(\floor{2N\tau},2Ny)] \xrightarrow{N \to \infty} \Phi(\eta(\tau,y))-S \cH'(y).\]
Here $S$ is as in~\eqref{eq:specific_disc_gaus}, $\hh$ is the unshifted height function, and $\eta,\cH$ are as in~\eqref{eq:eta_intro} and \Cref{def:limit_shape}. Specifically, for any $\tau_1,\ldots,\tau_n \in (0,1]$ and integers $k_1,\ldots,k_n > 0$, the random vector 
\begin{equation*}
    \left( \frac{1}{2N} \sum_{y \in \frac{1}{2N} (\Z+\frac{1}{2})} \Bigg(\hs\left(\lfloor 2N\tau_i\rfloor,y\right) - \E[\hh\left(\lfloor 2N\tau_i\rfloor,y\right)]\Bigg) t^{k_iy}\right)_{1 \leq i \leq n} 
\end{equation*}
converges in distribution as $N \to \infty$ to the random vector
\begin{equation*}
    \left( \int_{\frac{\log 2}{\log t}}^\infty \left( \frac{1}{\sqrt{\pi}}\Phi(\eta(\tau_i,y)) - S \cH'(y) \right) t^{k_iy} dy \right)_{1 \le i \le n}.
\end{equation*} 
\end{theorem}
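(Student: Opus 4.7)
The plan is to reduce the claim to \Cref{thm:unshifted_gff_convergence_intro} by exploiting the product structure of the shift-mixed measure. By construction, the shift-mixed $q^{\mvol}$ measure samples an unshifted cylindric partition (with height function $\hh$) together with an independent discrete Gaussian shift $S$ of distribution~\eqref{eq:specific_disc_gaus}, and the shift-mixed height function $\hs$ is obtained from $\hh$ by a rigid translation of the vertical coordinate encoding $S$; with the convention matching the $-S\cH'(y)$ in the limit, this reads $\hs(\tau^\sharp, y^\sharp) = \hh(\tau^\sharp, y^\sharp - S)$ with $\hh$ independent of $S$. The key point is that $S$ is of order one, so the induced shift $S/(2N)$ of the rescaled coordinate is negligible on the fluctuation scale but produces a nontrivial deterministic correction to the expectation.

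I would decompose the centered prelimit quantity as
\begin{align*}
\hs(\tau^\sharp, y^\sharp) - \E[\hh(\tau^\sharp, y^\sharp)]
&= \big(\hh(\tau^\sharp, y^\sharp - S) - \E[\hh(\tau^\sharp, y^\sharp - S) \mid S]\big) \\
&\quad + \big(\E[\hh(\tau^\sharp, y^\sharp - S) \mid S] - \E[\hh(\tau^\sharp, y^\sharp)]\big)
\end{align*}
into a fluctuation piece and a deterministic-shift piece. For the fluctuation piece, the change of variable $w = y - S/(2N)$ in the Riemann-sum pairing with $t^{k_i y}$ produces a prefactor $t^{-k_i S/(2N)} \to 1$ times the prelimit statistic of \Cref{thm:unshifted_gff_convergence_intro} applied to $\hh$; since $\hh$ is independent of $S$, conditioning on $S$ and then invoking \Cref{thm:unshifted_gff_convergence_intro} gives convergence in distribution of this piece to the GFF integral $\int \tfrac{1}{\sqrt{\pi}} \Phi(\eta(\tau_i, w)) t^{k_i w}\, dw$, jointly with $S$. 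For the deterministic-shift piece, the telescoping identity
\begin{align*}
\E[\hh(\tau^\sharp, y^\sharp - S) \mid S] - \E[\hh(\tau^\sharp, y^\sharp)]
= -\sum_{j=0}^{S-1} \Pr\big[\text{no lozenge of type }\hloz\text{ at }(\tau^\sharp, y^\sharp - j - 1)\big]
\end{align*}
(with the analogous expression when $S < 0$), together with the asymptotic horizontal-lozenge density $\Pr[\text{no }\hloz\text{ at }(\lfloor 2N\tau\rfloor, 2Ny^\sharp)] \to 1 - p_{\hloz}(\tau, y^\sharp/(2N)) = \cH'(y^\sharp/(2N))$, gives a total contribution of $-S \cH'(y) + o_N(1)$ at the rescaled point; Riemann-summing against $t^{k_i y}$ with weight $1/(2N)$ then yields the deterministic limit $-S \int \cH'(y) t^{k_i y}\, dy$.

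The main technical hurdle is upgrading the pointwise convergence of lozenge densities to a form sufficient to integrate over the entire vertical range: one needs uniform-on-compacts convergence of single-row horizontal-lozenge probabilities to $\cH'$ together with integrable tail control in $y$. Both ingredients follow from the limit shape arguments of \Cref{sec:limit_shape}, combined with the super-polynomial decay of $t^{k_i y}$ as $y \to +\infty$ and the fact that $\cH'(y) = 0$ for $y \le \log 2/\log t$, which bounds the support of the correction from below. Once this is in hand, the joint convergence of the prelimit random vector to the stated limit, with the same discrete Gaussian $S$ appearing in every component, is immediate from the independence of $\hh$ and $S$ via the continuous mapping theorem applied to the Gaussian limit of \Cref{thm:unshifted_gff_convergence_intro}.
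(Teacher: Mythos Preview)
Your decomposition into a fluctuation piece and a deterministic-shift piece, together with the change of variable $y\mapsto y-\tfrac{S}{2N}$ on the fluctuation piece, is exactly what the paper does. After reindexing, the fluctuation piece becomes $t^{k_iS/(2N)}$ times the prelimit statistic of \Cref{thm:unshifted_gff_convergence_intro}; the paper then further splits this into $B_N^{(i)}+C_N^{(i)}$ with $C_N^{(i)}=(t^{k_iS/(2N)}-1)B_N^{(i)}$ and argues via joint cumulants, whereas you appeal directly to Slutsky and continuous mapping. Either argument is fine.

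The difference is in the deterministic-shift piece, and here there is a gap. You reduce it, via telescoping, to a sum of one-point lozenge probabilities and then claim that uniform-on-compacts convergence of $\Pr[\text{no }\hloz\text{ at }(\lfloor 2N\tau\rfloor,2Ny)]\to\cH'(y)$ (with integrable tails) follows from the limit-shape arguments of \Cref{sec:limit_shape}. It does not: \Cref{sec:limit_shape} establishes convergence of $\tfrac{1}{2N}\hh$ to $\cH$ (uniformly in probability) and of the integrated observables $\tfrac{1}{(2N)^2}\sum_y \hh\, t^{ky}$, but neither implies convergence of the discrete $y$-derivative of $\E[\hh]$ at individual lattice points. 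Such density convergence is true (it can be extracted from the kernel asymptotics in \cite{Bor07}), but it is an additional input, not a consequence of the limit-shape section.

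The fix is to avoid the telescoping argument entirely: apply the \emph{same} change of variable $y\mapsto y-\tfrac{S}{2N}$ to the deterministic piece. Summing $\E[\hh(\tau^\sharp,2Ny-S)]-\E[\hh(\tau^\sharp,2Ny)]$ against $t^{k_iy}$ and reindexing gives $(t^{k_iS/(2N)}-1)\cdot\tfrac{1}{2N}\sum_w t^{k_iw}\E[\hh(\tau^\sharp,2Nw)]$, which is exactly the paper's $A_N^{(i)}$. Now $t^{k_iS/(2N)}-1\sim \tfrac{k_i\log t}{2N}S$, while the sum is $2N$ times the observable of \Cref{thm:moment_asymptotics}, and an integration by parts converts $\int\cH(y)t^{k_iy}\,dy$ into $-\tfrac{1}{k_i\log t}\int\cH'(y)t^{k_iy}\,dy$. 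This uses only what is already proved and is precisely the route the paper takes.
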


As alluded to in \Cref{sec:background}, the unshifted $q^{\mvol}$ model does not correspond to a dimer model with local edge weights, in contrast with the shift-mixed model. From the perspective of dimer models, \Cref{thm:unshifted_gff_convergence_intro} and \Cref{thm:shifted_gff_intro} show that the height fluctuations of the conditioned (unshifted) model are given solely by the Gaussian free field whereas the unconditioned (shift-mixed) model has an additional discrete Gaussian component. Conjecturally, the height fluctuations of tiling models on multiply connected domains have discrete Gaussian components, one for each hole, see e.g.~\cite[Conjecture 24.1, Conjecture 24.2]{gorin_2021}. A proof for certain domains is expected to appear in~\cite{borot2021unpublished}. In fact, these conjectures predict an explicit value of the `variance' parameter $C$ of the discrete Gaussians which appear. In \Cref{sec:hol_hex} we show that \Cref{thm:shifted_gff_intro} confirms this prediction. 

To our knowledge, \Cref{thm:shifted_gff_intro} is the first result to establish limiting height fluctuations for an unconditioned dimer model with local edge weights on a domain topologically equivalent to an annulus (as mentioned, for a conditioned dimer model such results were shown previously in~\cite{BuG18}). The only other non-simply connected domain for which we are aware of such results is the torus, in which the limiting height fluctuations are also given by a Gaussian free field on the torus and an additional discrete Gaussian component coming from the height change from winding around the torus. Convergence of the discrete component to a discrete Gaussian was shown in~\cite{boutillier2009loop} for lozenge tilings, and the discrete and Gaussian free field components for domino tilings were studied previously in~\cite{dubedat2015dimers,dubedat2015asymptotics,berestycki2019dimer}. The limit pair given by the Gaussian free field and discrete component is often referred to as the \emph{compactified Gaussian free field}, see~\cite[Section 2.1.3]{dubedat2015dimers} (also~\cite[Lecture 1.4]{gawedzki1997lectures} and~\cite[Sections 6.3.5 and 10.4.1]{francesco2012conformal}), and the discrete component is called the \emph{instanton component}. 

It is worth noting that a discrete component was found much earlier in~\cite{pastur2006limiting} for random matrix models whose limiting spectral measure has disconnected support. These discrete shifts should appear for similar reasons, though they were not explicitly linked to the discrete Gaussian.

\subsection{Methods} 

Our approach is based on new formulas for joint exponential moments of the height function of periodic Schur processes, for both the shift-mixed and unshifted models. For the unshifted $q^{\mvol}$ model, the joint moment formulas are particularly amenable to asymptotic analysis. The asymptotics of these formulas form the basis of the proofs of \Cref{thm:limit_shape_intro,thm:unshifted_gff_convergence_intro}. For the shift-mixed $q^{\mvol}$ model, we deduce \Cref{thm:shifted_gff_intro} from \Cref{thm:unshifted_gff_convergence_intro} by a general argument which applies for any shift distribution satisfying a moment condition, see \Cref{thm:shift-mixed_fluctuations}. In particular, this argument does not rely on the shift being a discrete Gaussian.

The formulas for the joint moments were inspired by~\cite{koshida2020free}, which obtained formulas for observables for \emph{periodic Macdonald processes}. Here, the periodic Macdonald process generalizes the periodic Schur process in the same manner as the Macdonald process (see~\cite{BC14}) generalizes the Schur process. While the results of~\cite{koshida2020free} do not imply our formulas, there is agreement between the two in certain special cases.

Previous works as in~\cite{Bor07,betea2019periodic} on periodic Schur processes studied asymptotic questions via direct asymptotic analysis of contour integral formulas for the correlation kernel. For simply connected tiling models with determinantal structure, various works such as~\cite{duits2013gaussian,BF14,Pet15} have used similar methods to establish Gaussian free field fluctuations of the height function. Based on these two considerations, it seems that a natural way to access fluctuations for the shift-mixed $q^{\mvol}$ model is to use the correlation kernels, and indeed this was the first approach we considered.

However, due to the discrete shift term in the fluctuations, which is not present in tilings of simply connected domains, there are additional difficulties in this approach. If we consider the unshifted model instead, the fluctuations no longer have the discrete shift, but now the model is non-determinantal. We believe it should be possible to study the fluctuations of the unshifted $q^{\mvol}$ measure via direct analysis of the correlation functions, but that this approach would be considerably more technical than the moment-based one in this paper.

Our moment-based approach is closely related to those of~\cite{BG15,GZ18,Ahn20}, which establish Gaussian free field fluctuations for various models arising as degenerations of Macdonald process. In these works, there is no determinantal/free fermionic structure, and the analogous moment formulas are obtained through certain difference operators acting on symmetric functions. We expect that our formulas can be obtained via difference operators, using the methods from~\cite{BCGS16}. With this approach, the determinantal structure for the shift-mixed model manifests itself through the presence of a richer family of operators. However, we instead obtain these moment formulas using the generating series of the correlation kernel of the shift-mixed periodic Schur processes derived in~\cite{Bor07}, see \Cref{thm:shifted_observable_formula}. This is the only place that the determinantal structure enters our analysis: we then use these formulas and conditioning to obtain the corresponding formulas for the (non-determinantal) unshifted periodic Schur process in \Cref{thm:observable_formula}, which are the starting point for the asymptotic analysis. 

Since moment formulas like ours can be derived from determinantal structure but exist in settings where this structure does not, it might seem that they would be more difficult to use than directly studying the correlation kernel. However, for showing convergence to the Gaussian free field, we found that packaging the determinantal structure of the shift-mixed model into these moment formulas led to much simpler analysis than approaches which used the correlation kernel directly.

\subsection{Organization.} The paper is organised as follows. 
The definitions and basic properties of the Gaussian free field and Green's function are given in Section~\ref{sec:GFF_and_Green}.  
In Section~\ref{Sec:model} we introduce the periodic Schur process, an integral formula for its correlation kernel due to~\cite{Bor07}, and an equivalent dimer model. Section~\ref{sec:moment_formula} is devoted to a contour integral formula for certain Laplace transform observables of the~$q^{\mvol}$ height function.
In Section~\ref{sec:asymptotics} we compute asymptotics of the cumulants of these observables for the unshifted $q^{\mvol}$ measure.
Section~\ref{sec:main_proofs} uses these to prove the limit shape and fluctuations results.
Finally, in Section~\ref{sec:conf_str} we match our main results with existing conjectures on height fluctuations in dimer models.

\bigskip

\addtocontents{toc}{\protect\setcounter{tocdepth}{1}}
\subsection*{Acknowledgements}  The authors are deeply grateful to Alexei Borodin and Vadim Gorin for suggesting the problem and many helpful discussions, Leonid Petrov for answering questions about~\cite{Pet15}, and the anonymous referees for helpful comments.
MR~was supported by the Swiss NSF grants P400P2-194429 and P2GEP2-184555 and also partially supported by the NSF Grant DMS-1664619. RVP was supported by the National Science Foundation Graduate Research Fellowship under Grant No. \#$1745302$.
\addtocontents{toc}{\protect\setcounter{tocdepth}{2}}

\section{Gaussian free field and Green's function on cylinder}\label{sec:GFF_and_Green} 

\subsection{Gaussian free field} \label{GFF_def}

In this section we define the two-dimensional Gaussian free field on a cylinder with Dirichlet boundary conditions.

Consider the Laplace operator on a cylinder $\mathcal{C}:=(0,\alpha)\times \R/\beta\Z$
\[
\Delta=\frac{\partial^2}{\partial x^2}+\frac{\partial^2}{\partial y^2} = 4 \partial_z \partial_{\bar{z}}
\]
where for the latter definition we identify $\cC$ with $\{x+\bi y \in \C: 0 < x < \alpha\}/\bi \beta \Z$.

\begin{definition}\label{def:Green_1}
The Green's function $G(z_1,z_2)$ on a cylinder $\mathcal{C}$ with zero Dirichlet boundary conditions is a function $G:\mathcal{C}\times\mathcal{C}\to\mathbb{R}$, such that it is a kernel of the Laplace operator
\begin{align*}
     \Delta_{z_1}G(z_1,z_2)=\Delta_{z_2}G(z_1,z_2)=\delta(z_1=z_2)
\end{align*}
and it satisfies zero Dirichlet boundary conditions
\[ G(z_1,z_2) = 0, \quad \quad z_2 \in \partial \cC.
\]
\end{definition}

Before defining the Gaussian free field, we begin with some informal motivation; we stick to the cylinder for this discussion, but it holds also for general Riemann surfaces. The \emph{Gaussian free field on a cylinder $\mathcal{C}$} with Dirichlet boundary conditions is informally the random `function' $\Phi$ on $\mathcal{C}$ such that $\Phi(z)$ is a Gaussian random variable for every $z \in \mathcal{C}$, with covariances
\[ \cov(\Phi(z_1),\Phi(z_2)) = G(z_1,z_2), \]
for $z_1,z_2 \in \cC$, where $G$ is the Green's function on $\mathcal{C}$ with zero Dirichlet boundary conditions. To make rigorous sense of this definition, one defines $\Phi$ as a random distribution on a suitable class of test functions $\varphi: \cC \to \R$, by mandating that $\int_{\cC} \varphi(z) \Phi(z) \, dx \, dy$ is a centered Gaussian random variable with
\[ \var\left( \int_{\cC} \varphi(z) \Phi(z) \, dx \, dy \right) = \int_{\cC} \int_{\cC} \varphi(z_1) \varphi(z_2) G(z_1,z_2) \, dx_1 \, dy_1\,dx_2\, dy_2 \]
where $z = x + \bi y$, and similarly for $z_1,z_2$. Note that by the polarization identity, the above equation determines covariances as well, and below we will similarly only specify the variances.



A natural space of test functions on which to define $\Phi$ is the set of compactly supported smooth functions $\cD(\cC)$ on $\cC$. However, the statement of \Cref{thm:unshifted_gff_convergence_intro} corresponds to pairing the Gaussian free field with test `functions' which come from integrating exponential functions along ($1$-dimensional) vertical slices, and are actually distributions of the form $\varphi(x,y) = \delta(y=y_0)f(x)$ for fixed $y_0$ and smooth $f: \R \to \R$ which in our case are exponential functions. These live inside the negative order Sobolev space\footnote{We refer to the excellent exposition of the Gaussian free field in \cite[Chapter 1]{berestycki2015introduction} for definitions and an elaboration of our discussion.} $\mathbb{H}_0^{-1}(\cC)$, and in fact form a dense subset of it. Hence we want an appropriate notion of testing the Gaussian free field against elements of $\mathbb{H}_0^{-1}(\cC)$, which we do in the following manner. Let $(\Omega, \cF, \PP)$ denote the common probability space on which the random variables in $\left\{ \int_\cC \varphi(z) \Phi(z) \, dx \, dy \right\}_{\varphi \in \cD(\cC)}$ live. We view the Gaussian free field as a map from $\cD(\cC)$ to $L^2(\Omega,\cF,\PP)$ via $\varphi \mapsto \int_\cC \varphi(z) \Phi(z) \, dx \, dy$. Endowing $\cD(\cC)$ with the $\mathbb{H}_0^{-1}(\cC)$ norm, we see that $\Phi$ is an isometry and therefore extends to $\mathbb{H}_0^{-1}(\cC)$, by density of $\cD(\cC)$ in this Sobolev space. We now give a precise definition.

\begin{definition}\label{def:GFF}
The \emph{Gaussian free field $\Phi$ on a cylinder $\mathcal{C}$} with zero Dirichlet boundary conditions and test space $\mathbb{H}_0^{-1}(\cC)$ is a jointly Gaussian family of random variables indexed by the elements of $\mathbb{H}_0^{-1}(\cC)$, with covariances determined by
\[ \var\left( \int_{\cC} \varphi(z) \Phi(z) \, dx \, dy \right) = \int_{\cC} \int_{\cC} \varphi(z_1) \varphi(z_2) G(z_1,z_2) \, dx_1 \, dy_1\,dx_2\, dy_2 \]
where $z = x + \bi y$ and similarly for $z_1,z_2$, and $G$ is the Green's function on $\cC$ with zero Dirichlet boundary conditions.
\end{definition}

In addition, we can also consider pullbacks of the Gaussian free field. Given some map $\Omega:D \to \cC$, where~$D$ is some domain, then $\Phi \circ \Omega$ is informally the random Gaussian field on $D$ such that
\[ \cov\left( \Phi\circ \Omega(w_1), \Phi \circ \Omega(w_2) \right) = G(\Omega(w_1), \Omega(w_2)). \]
As above, we can formally define $\Phi \circ \Omega$ as a Gaussian family indexed by test functions on $D$.

Our main result for the unshifted $q^\mvol$ measure, \Cref{thm:unshifted_gff_convergence_intro}, was stated earlier in elementary terms as convergence of certain explicit random vectors to Gaussian ones with explicit covariance. However, it may be equivalently stated as the fact that the centered height function converges to the pullback of the Gaussian free field by the map $\eta: \sL \to \cC$ of \eqref{eq:eta_intro}, on the subset
\[
\cS = \mbox{linear combinations of }\{ \sL \ni (\tau,y) \mapsto \delta(\tau = \tau_0) t^{k y}: k \in \Z_{\ge 1}\}
\]
of $\mathbb{H}_0^{-1}(\sL)$. This explains the statement in \Cref{thm:unshifted_gff_convergence_intro} that the height function converges to a Gaussian free field; the analogous remark applies to \Cref{thm:shifted_gff_intro}, taking into account the discrete shift.

\begin{remark}
While the set $\cS$ above is clearly more restrictive than $\mathbb{H}_0^{-1}(\sL)$, we note that the Gaussian free field is still determined by this subset in the following sense. Suppose $\Psi_i: \mathbb{H}_0^{-1}(\sL) \to L^2(\Omega_i,\cF_i,\PP_i)$ for $i = 1,2$ are two continuous linear maps. Let us say that $\Psi_1$ and $\Psi_2$ \emph{agree in distribution on a subset} $A \subset \mathbb{H}_0^{-1}(\sL)$ if $\{\Psi_1(\varphi)\}_{\varphi \in A}$ and $\{\Psi_2(\varphi)\}_{\varphi \in A}$ agree in finite dimensional distributions. Since $\cS$ is dense in $\mathbb{H}_0^{-1}(\sL)$, if $\Psi_1$ and $\Psi_2$ agree in distribution on $\cS$, then $\Psi_1$ and $\Psi_2$ agree in distribution on $\mathbb{H}_0^{-1}(\sL)$.

Thus our main result proves convergence to the Gaussian free field on a dense subset of $\mathbb{H}_0^{-1}(\sL)$. While this does not imply the convergence on the entirety of $\mathbb{H}_0^{-1}(\sL)$, it does uniquely determine the limit by the reasoning above.
\end{remark}

For our applications, we will work with $\cC$ through its fundamental domain $(0,\alpha) + \bi (0,\beta] \subset \C$. Functions on $\cC$ lift to functions $f$ on the covering space $(0,\alpha) + \bi \R \subset \C$ satisfying the periodicity condition
\[ f(z) = f(z + \bi \beta). \]
We often identify such periodic $f$ with the restriction $f|_{(0,\alpha) + \bi (0,\beta]}$ to the fundamental domain.

\subsection{Computing the Green's function.}

In this section we give an explicit formula of the Green's function on a cylinder in terms of Jacobi theta functions. 
Let us first give a characterization which can be seen as an equivalent definition of the Green's function, see e.g.~\cite[Lemma 3.7--3.8]{werner2020lecture}.  

\begin{definition}\label{def:green}
The Green's function on a cylinder $\mathcal{C}:=(0,\alpha)\times \R/\beta\Z$ with zero Dirichlet boundary conditions is the unique function $G:\mathcal{C}\times\mathcal{C}\to\mathbb{R}$ satisfying the following conditions
\begin{enumerate}
    \item[(i)] $G$ is real-valued and $G(z_1,z_2)=G(z_2,z_1)$;
    \item[(ii)] Harmonicity: $\Delta_{z_1}G(z_1,z_2)=0$ for any $z_1\neq z_2$;
    \item[(iii)] Singularity: for a fixed $z_1\in \mathbb{C}^{+}$ one has $G(z_1,z_2)=-\frac{1}{2\pi}\cdot \log|z_1-z_2|+O(1)$ as $z_2\to z_1$;
    \item[(iv)] Boundary conditions: for a fixed $z_1\in \C^+$
    \[
    \begin{cases} 
    G(z_1,z_2+\bi \beta) = G(z_1,z_2) & \mbox{(periodicity)} \\
    G(z_1,\bi y) = G(z_1, \alpha+\bi y) = 0 & \mbox{($0$-Dirichlet)}.
    \end{cases}\]
\end{enumerate}
\end{definition}

We recall the infinite $q$-Pochhammer symbol, defined by
\[
(a;t)_\infty = \prod_{n \geq 1} (1-a t^n).
\]
For $t \in (0,1)$, let $\theta_1$ be the Jacobi theta function (see e.g.~\cite{erdelyibook}) defined as
\begin{align}
\theta_1(z;t) := \sum_{m=-\infty}^{\infty}(-1)^m t^{m(m+1)/2}z^{m+1/2}=(z^{1/2}-z^{-1/2}) (t;t)_\infty (tz;t)_\infty (t/z;t)_\infty, \label{eq:theta1}
\end{align}
which is holomorphic on $\C^\times.$  
Define
\begin{align}\label{def:Theta}
\Theta(\eta\,|\,\omega) := \theta_1(e^{2\pi\bi \eta},e^{2\pi \bi \omega}).
\end{align}

\begin{proposition}\label{prop:green}
Let $t=e^{2\pi\bi\omega} \in (0,1)$. The function $\widetilde G(\eta_1,\eta_2):=-\frac{1}{2\pi}\log \left|
\frac{\Theta(\eta_1-\eta_2\,|\,\omega)}
{\Theta(\eta_1+\overline\eta_2\,|\,\omega)}
\right|$ is the Green's function with zero boundary conditions on a cylinder $(0,\tfrac{1}{2}) \times \R/\tfrac{|\log t|}{2\pi}\Z$.
\end{proposition}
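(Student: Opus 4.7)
My plan is to verify the four characterizing properties of Definition~\ref{def:green}; by the uniqueness clause there, this will identify $\widetilde G$ with the Green's function. The essential tools are the classical symmetries of $\Theta(\cdot\,|\,\omega)$ readable off from~\eqref{eq:theta1}: the parity $\Theta(-\eta\,|\,\omega) = -\Theta(\eta\,|\,\omega)$, the reality relation $\overline{\Theta(\eta\,|\,\omega)} = \Theta(-\overline\eta\,|\,\omega)$ (valid since $t \in (0,1)\subset\R$), the unit quasi-periodicity $\Theta(\eta+1\,|\,\omega) = -\Theta(\eta\,|\,\omega)$ coming from the half-integer exponent in~\eqref{eq:theta1}, and the modular quasi-period $\Theta(\eta+\omega\,|\,\omega) = -e^{-\pi\bi(2\eta+\omega)}\,\Theta(\eta\,|\,\omega)$ obtained by shifting the summation index in~\eqref{eq:theta1}.

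The first three conditions follow essentially formally. Real-valuedness is built in. Symmetry $\widetilde G(\eta_1,\eta_2)=\widetilde G(\eta_2,\eta_1)$ drops out of the parity identity (for the numerator) and the reality relation combined with parity (for the denominator). Harmonicity is the statement that $\log|f|$ is harmonic where $f$ is holomorphic and nonzero; zeros of $\Theta(\cdot\,|\,\omega)$ lie on the lattice $\Z+\omega\Z$ and so have integer real part, while for $\eta_1,\eta_2$ in the fundamental domain $(0,\tfrac{1}{2})+\bi(0,\tfrac{|\log t|}{2\pi}]$ the denominator argument $\eta_1+\overline{\eta_2}$ has real part in $(0,1)$, so the denominator never vanishes and harmonicity holds off the diagonal. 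The logarithmic singularity at $\eta_1=\eta_2$ is extracted from the simple zero of $\Theta$ at $\eta=0$, whose derivative is the nonzero constant $2\pi\bi\,\theta_1'(1;t)$; so $|\Theta(\eta_1-\eta_2\,|\,\omega)| \sim C\,|\eta_1-\eta_2|$ and the divergence of $-\tfrac{1}{2\pi}\log$ matches condition~(iii).

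The crux is condition (iv). The denominator $\Theta(\eta_1+\overline{\eta_2}\,|\,\omega)$ is a classical reflection-image factor, engineered so that on both vertical sides of the strip its modulus equals that of the numerator. On $\Re(\eta_1)=0$, one has $\overline{\eta_1}=-\eta_1$, and the reality relation directly gives $|\Theta(\eta_1+\overline{\eta_2}\,|\,\omega)|=|\Theta(\eta_1-\eta_2\,|\,\omega)|$; on $\Re(\eta_1)=\tfrac{1}{2}$, one first applies the $\eta\mapsto\eta+1$ identity to shift by $1$ (changing only a sign) and then uses the reality relation to reduce to the numerator. For the vertical periodicity $\eta_1\mapsto\eta_1+\omega$, the quasi-periodicity phases of numerator and denominator combine to a factor $e^{2\pi\bi(\eta_2+\overline{\eta_2})}=e^{4\pi\bi\,\Re(\eta_2)}$, which is unimodular, so the ratio of moduli is preserved.

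The main obstacle, modest in scope, is the bookkeeping needed to check that the quasi-periodicity prefactors from numerator and denominator cancel in absolute value; this cancellation relies crucially on $\omega$ being purely imaginary, so that $\omega$-shifts contribute real exponents to the prefactor magnitudes while the cross-terms involving $\overline{\eta_2}$ produce imaginary exponents and drop out. Once (i)--(iv) are established, uniqueness of the solution to Definition~\ref{def:green}, a standard consequence of the maximum principle applied to the difference of two candidates on the bounded-in-one-direction cylinder, concludes the proof.
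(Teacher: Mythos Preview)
Your proposal is correct and follows essentially the same route as the paper: verify properties (i)--(iv) of Definition~\ref{def:green} using the parity, reality, and quasi-periodicity identities for $\Theta$, then invoke uniqueness. The only cosmetic differences are that the paper checks the Dirichlet condition in the second variable (using the substitution $\zeta_j=e^{2\pi\bi\eta_j}$ and noting $\zeta_2\in\R$ when $\Re\eta_2=\tfrac12$) rather than the first, and is terser about the periodicity check; your slightly more explicit bookkeeping of the quasi-periodicity prefactors and the non-vanishing of the denominator is a welcome elaboration.
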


\begin{proof} Let us check that $\widetilde G$ satisfies all conditions listed in Definition~\ref{def:green}.
Note that $-\Theta(\eta\,|\,\omega)=\Theta(-\eta\,|\,\omega)$, therefore $\widetilde G(\eta_1,\eta_2)=\widetilde G(\eta_2,\eta_1).$  

Recall that $\theta_1(z;t)$ is holomorphic on $\C^\times$ as a function of $z$ with zeros at $z=t^n$, where $n\in\mathbb{Z}$. Hence the function $\Theta(\eta \,|\, \omega)$ is holomorphic with zeros at 
$\eta=n+\bi\tfrac{\log t}{2\pi}m$ with $n,m \in\mathbb{Z}$. This together with a definition of the function $\widetilde G$ imply that $\widetilde G(\eta_1,\eta_2)$ is harmonic (as a real part of holomorphic function) as a function of $\eta_1 \in \C^\times$ for any $\eta_1\neq \eta_2$ and has a correct singularity as $\eta_2\to\eta_1$. 

To check the periodicity it is enough to note that $\theta_1(t^{\pm 1}z;t)=-t^{-1/2}z^{\mp 1}\theta_1(z;t)$.
Finally, let us check that it satisfies zero Dirichlet boundary conditions. Note that $-\bi y=\overline{\bi y}$, therefore 
\[
\left|\frac{\Theta(\eta_1-\bi y\,|\,\omega)}
{\Theta(\eta_1+\overline{\bi y}\,|\,\omega)}\right|=1
\] 
and hence $G(z_1,\bi y)=0$. To see that $G(z_1,\tfrac12+\bi y)=0$
 it is enough to note that 
 \[\frac{\theta_1(\zeta_1/\zeta_2; t)}{\theta_1(\zeta_1/\overline{\zeta}_2; t)} = \frac{\Theta(\eta_1-\eta_2\,|\,\omega)}
{\Theta(\eta_1+\overline\eta_2\,|\,\omega)},
\]
where $\zeta_j := e^{2\pi\bi \eta_{j}}$ for $j=1,2$. Indeed, note that $\zeta_{2} = e^{2\pi\bi \left(\bi y + \tfrac12\right)}$ is real and therefore $\left|\tfrac{\theta_1(\zeta_1/\zeta_2; t) }{ \theta_1(\zeta_1/\overline{\zeta}_2; t)}\right| =1$.
\end{proof}

\section{The model}\label{Sec:model}

\subsection{Periodic Schur processes}

We begin with some preliminary definitions from symmetric function theory. We say that a sequence of nonnegative integers $\lambda = (\lambda_1 \ge \lambda_2 \ge \cdots)$ is a \emph{partition} if all but finitely many $\lambda_i$ are zero. 

The \emph{length} of the partition $\lambda$ is the number of nonzero elements in the sequence, denoted $\ell(\lambda)$. Let $\Y_N$ denote the set of partitions with length at most $N$, and $\Y$ denote the set of all partitions. Given partitions $\lambda$ of length $N$ and $\mu$ of length $N-1$ we write $\mu\prec\lambda$ if \[
\lambda_1\geq\mu_1\geq\lambda_2\geq\ldots\geq\mu_{N-1}\geq\lambda_N.
\] 
A \emph{skew Schur polynomial indexed by the skew shape $\lambda/\mu$} is defined by
\begin{align}\label{eq:skew}
s_{\lambda/\mu}(x_1,\ldots,x_{N-k}):=
\sum_{\substack{\lambda^{(k+1)},\ldots,\lambda^{(N-1)} \\
\mu=\lambda^{(k)}\prec\lambda^{(k+1)}\prec\ldots\prec\lambda^{(N)}=\lambda}}
x_1^{|\lambda^{(k+1)}|-|\lambda^{(k)}|}
\cdot\ldots\cdot
x_{N-k}^{|\lambda^{(N)}|-|\lambda^{(N-1)}|}
\end{align}
which is a homogeneous symmetric polynomial in $x_1,\ldots,x_N$ of degree $|\lambda| := \sum_{i \ge 1} \lambda_i$. If $\mu = (0,0,\ldots)$ is the zero partition, then $s_{\lambda/\mu}$ is the Schur function indexed by $\lambda$ given by the alternant formula
\[ s_\lambda(x_1,\ldots,x_N) = \frac{\det[x_i^{\lambda_j + N - j}]_{i,j=1}^N}{\det[x_i^{N-j}]_{i,j=1}^N}. \]
We write $\mu \subset \lambda$ if $\lambda_i \ge \mu_i$ for $i \ge 1$. From~\eqref{eq:skew}, observe that $s_{\lambda/\mu} = 0$ whenever $\mu \not \subset \lambda$.

If we denote by $\Lambda_N$ the algebra of symmetric polynomials in $N$ variables over $\C$, there is a natural projection map $\Lambda_{N+1} \to \Lambda_N$ defined by
\[ f(x_1,\ldots,x_{N+1}) \mapsto f(x_1,\ldots,x_N,0). \]
We may then define the algebra $\Lambda$, viewed informally as symmetric functions in infinitely many variables, by taking the inverse limits of each degree $d$ component of $\Lambda_N$ with respect to this projection and recombining them, see~\cite{Mac} for details. It is easy to see that the skew Schur polynomials satisfy the consistency relation
\[ s_{\lambda/\mu}(x_1,\ldots,x_M,0) = s_{\lambda/\mu}(x_1,\ldots,x_{M+1}), \quad \quad M \ge \ell(\lambda). \]
Therefore we may view $s_{\lambda/\mu}$ as an element of $\Lambda$.

Another distinguished set of symmetric functions that we consider are the Newton power sums
\[ p_k = \sum_{i \ge 1} x_i^k, \]
where $k \in \Z_{>0}$. More precisely, $p_k$ is the element of $\Lambda$ whose projection in $\Lambda_N$ is given by
\[ p_k(x_1,\ldots,x_N) = \sum_{i=1}^N x_i^k. \]
Then $\{p_k\}_{k \ge 0}$ forms an algebraic basis of $\Lambda$. In particular, if $\rho$ is \emph{specialization}, i.e. an algebra homomorphism from $\Lambda$ to $\C$, then $\rho$ is uniquely determined by its values of $p_k$. Notationally, we write $f(\rho)$ rather than $\rho(f)$ to denote the value of $\rho$ at $f \in \Lambda$.

We say that a specialization $\rho$ is \emph{Schur-positive} if $s_{\lambda/\mu}(\rho) \ge 0$ for all $\lambda,\mu$. The primary examples of Schur positive $\rho$ are given by the evaluation map
\[ s_{\lambda/\mu}(\rho) = s_{\lambda/\mu}(a_1,\ldots,a_k,0,0,\ldots), \]
where $a_1,\ldots,a_k \ge 0$. In this case, we simply write $\rho = (a_1,\ldots,a_k,0,0,\ldots)$.

Given Schur-positive specializations $a[1], b[2], a[3], \ldots, a[2N-1], b[2N]$  and $0 < t < 1$, define the \emph{periodic Schur process} to be a measure on sequences $\vec{\lambda}=(\lambda^{(1)},\ldots,\lambda^{(2N)})$ of $2N$ partitions with a periodic boundary condition
\begin{align}\label{eq:per_bdry_conditions}
\lambda^{(2N)} \supset \lambda^{(1)} \subset \lambda^{(2)} \supset \cdots \subset \lambda^{(2N-2)} \supset \lambda^{(2N-1)} \subset \lambda^{(2N)} 
\end{align}
with probability proportional to
\[ W(\vec{\lambda}) = t^{|\lambda^{(2N)}|}
\prod_{i=1}^N s_{\lambda^{(2i-2)}/\lambda^{(2i-1)}}(a[2i-1])\, s_{\lambda^{(2i)}/\lambda^{(2i-1)}}(b[2i]),\]
where $s_{\lambda/\mu}$'s are the skew Schur functions, and here and later we take the indices modulo $2N$ if needed. The partition function $\sum_{\vec{\lambda}} W(\vec{\lambda})$ can be explicitly computed (see~\cite[Proposition 1.1]{Bor07}) and is convergent under the assumption that $|t| < 1$ and the series 
\begin{equation}\label{eq:conv_abs}
     \sum_{n \ge 1} n p_n(a[k]) p_n(b[l])
\end{equation}
is absolutely convergent for $k=1,3,\ldots,2N-1$ and $l=2,4,\ldots,2N$ (see~\cite[Remark 1.2]{Bor07}).

Let $\Z' = \Z + \tfrac{1}{2}$. To any sequence of partitions $\vec{\lambda}=(\lambda^{(1)},\ldots,\lambda^{(2N)})$ as in \eqref{eq:per_bdry_conditions}, we associate the point configuration with coordinates of points given by the set
\begin{align} \label{eq:unshift_pt_process}
\bigcup_{\tau=1}^{2N} \{(\tau, \lambda^{(\tau)}_i - i + \tfrac{1}{2}): i \geq 1\}.
\end{align}
We therefore identify the periodic Schur process with a measure on such point configurations. Note that this set uniquely determines $\vec{\lambda}$.
Correlation functions of the periodic Schur process are defined as 
\begin{align} \label{def:rho}
\rho_k(\tau_1,x_1;\ldots;\tau_k,x_k) = \operatorname{Pr}\left( x_j\in \{ \lambda_i^{(\tau_j)} - i + \tfrac{1}{2} \}_{i\ge 1} \, \text{ for all } \, j=1,\ldots,k\right).
\end{align}

While the periodic Schur process is not a determinantal point process, if we mix the periodic Schur process with an independent discrete Gaussian shift variable, we obtain a point process which is determinantal. We now introduce this model. It will be convenient to work with another Jacobi theta function $\theta_3(z;t)$, defined by 
\begin{align}\label{eq:theta3}
\theta_3(z;t) :=\sum_{m=-\infty}^{\infty} z^m t^{m^2/2} = (t;t)_\infty \prod_{n=\frac{1}{2},\frac{3}{2},\ldots} (1 + t^n z)(1 + t^n/z).
\end{align}
Note that $\theta_1(z;t) = -z^{-1/2}\theta_3(-zt^{-1/2};t).$

By \Cref{def:disc_gauss} and~\eqref{eq:theta3}, for an auxiliary parameter $u > 0$ the discrete Gaussian
\[
S \sim \cN_{\operatorname{discrete}}\left(\frac{\log u}{\log t}, \frac{|\log t|}{2}\right).  
\]
has distribution explicitly given by
\begin{align} \label{eq:v_distr}
\operatorname{Pr}(S=x) = \frac{1}{\theta_3(u;t)} u^x t^{x^2/2}, \quad 
x\in\Z.
\end{align} 
The fact that the normalizing constant of a discrete Gaussian is given by a Jacobi theta function, and hence has an alternate product form, was observed and used by~\cite{kemp1997characterizations}, see also more recent works~\cite{agostini2019discrete,szablowski2001discrete} for further discussion.

If $S$ and $\vec{\lambda}$ are independent, then $(\vec{\lambda},S)$, viewed as the point process
\begin{align} \label{eq:pt_process}
\{ (1,S + \lambda_i^{(1)} - i + \tfrac{1}{2}) \}_{i\ge 1} \cup \cdots \cup \{(2N,S + \lambda_i^{(2N)} - i + \tfrac{1}{2})\}_{i \ge 1},
\end{align}
is a determinantal point process on $\{1,\ldots,N\} \times \Z'$  due to~\cite{Bor07} which is called the \emph{shift-mixed periodic Schur process} (all points are shifted by an independent integer-valued random variable $S$). The correlation functions are defined as before
\begin{align} \label{def:rho_shift}
\rho_k^{\operatorname{shift},u}(\tau_1,x_1;\ldots;\tau_k,x_k) = 
\operatorname{Pr}\left( x_j\in \{ S + \lambda_i^{(\tau_j)} - i + \tfrac{1}{2} \}_{i\ge 1} \, \text{ for all } \, j=1,\ldots,k\right),
\end{align}
and are given explicitly due to Borodin~\cite{Bor07}. We introduce some notation before stating the formula.

Given Schur-positive specializations $a[1], b[2], a[3], \ldots, a[2N-1], b[2N]$, for each $i \in \{1,2,\ldots,N\}$ and $k \in \Z$ let us define specializations~$\tilde{a}[\,\cdot\,]$ and~$\tilde{b}[\,\cdot\,]$ by
\begin{align}
\tilde{a}[2i-1 + kN] = a[2i-1] t^k, \quad  \tilde{b}[2i + kN] = b[2i] t^{-k},
\end{align}
where for specializations $\rho$ and $w \in \C$ we define $\rho \cdot w$ by 
\[
p_\ell(\rho \cdot w) = w^\ell p_\ell(\rho).
\]

Let $p_k$ be the Newton power sums. Following~\cite{Bor07} let us define
\begin{align}
&H(x,z) := \exp\left( \sum_{k=1}^\infty \frac{1}{k} p_k(x) z^k \right); \\
&F(\tau,z) 
:= \frac{\prod_{j \le \tau} H(\tilde{b}[i],z)}{\prod_{i > \tau} H(\tilde{a}[i],z^{-1})}, \quad \quad j \in 2\Z, \quad i \in 2\Z + 1 \label{eq:def_F}
\end{align}
where $x,z$ are formal variables and $\tau \in \Z$. Note that $F$ depends implicitly on a sequence of specializations. We may now state the contour integral formula for the correlation kernel of the shift-mixed periodic Schur process due to~\cite{Bor07}.

\begin{theorem}[{\cite{Bor07}}] \label{thm:kernel}
Let $t \in (0,1), u \in \R_{>0}$ and $a[k], b[l], 1 \le k,l,\le N$ be specializations such that $p_n(a[k]), p_n(b[l]) = O(R^n)$ for some $0 < R < 1$ and for $1 \le k,l \le N$.
Then the correlation functions of the corresponding shift-mixed periodic Schur process are determinantal, 
\[ \rho_k^{\operatorname{shift},u}(\tau_1,x_1;\ldots;\tau_k,x_k) = \det[K(\tau_i,x_i;\tau_j,x_j)]_{i,j=1}^k, \]
where the correlation kernel $K(\sigma,x;\tau,y)$ is defined by the generating series 
\begin{align} \label{eq:generating_function}
\sum_{x,y \in \Z'} K(\sigma,x;\tau,y) \zeta^x \eta^y = \begin{cases}
\displaystyle \frac{F(\sigma,\zeta)}{F(\tau,\eta^{-1})} \sum_{m\in \Z'} \frac{(\zeta\eta)^m}{1 + (u t^m)^{-1}}, & \sigma \le \tau, \\
\displaystyle -\frac{F(\sigma,\zeta)}{F(\tau,\eta^{-1})} \sum_{m\in \Z'} \frac{(\zeta\eta)^m}{1 + u t^m}, & \sigma > \tau.
\end{cases}
\end{align}
\end{theorem}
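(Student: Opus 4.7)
The plan is to prove this via the free-fermion / infinite-wedge formalism, following the standard approach in the Schur process literature. The first step is to identify partitions with states in the infinite wedge space $\Lambda^{\infty/2}$: for each shift $S \in \Z$, the partition $\lambda$ corresponds to the basis vector indexed by the Maya diagram $\{S + \lambda_i - i + 1/2\}_{i \ge 1}$, which lies in the charge-$S$ sector. Under this identification the skew Schur functions arise as matrix elements of half-vertex operators $\Gamma_{\pm}(x)$: namely $s_{\lambda/\mu}(x) = \langle \lambda | \Gamma_-(x) | \mu \rangle = \langle \mu | \Gamma_+(x) | \lambda \rangle$.

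Next, I would recognize the shift-mixed periodic Schur process as a grand canonical trace on Fock space. The periodic boundary condition \eqref{eq:per_bdry_conditions} with the weight $t^{|\lambda^{(2N)}|}$ naturally produces a cyclic trace alternating the vertex operators with the propagator $t^{L_0}$; commuting $t^{L_0}$ past the vertex operators rescales their spectral parameters by powers of $t$, which is precisely the source of the tilded specializations $\tilde{a}, \tilde{b}$ appearing in \eqref{eq:def_F}. Mixing with the discrete Gaussian $S$ of distribution \eqref{eq:v_distr} then amounts to weighting each charge sector by $u^S t^{S^2/2}/\theta_3(u;t)$, which extends the trace from the charge-zero sector to the full Fock space with an additional insertion $u^C t^{C^2/2}$, where $C$ is the charge operator. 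The decay hypothesis $p_n(a[k]), p_n(b[l]) = O(R^n)$ with $R<1$ ensures absolute convergence of the trace via \eqref{eq:conv_abs}.

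Since the operator inside the trace is quadratic in the fermion modes, the standard Wick theorem for free-fermion ensembles yields that the point process \eqref{eq:pt_process} is determinantal, and that $K(\sigma,x;\tau,y)$ equals the normalized two-point function
\[
K(\sigma,x;\tau,y) = \frac{1}{Z \cdot \theta_3(u;t)}\, \operatorname{Tr}_{\Lambda^{\infty/2}}\!\Bigl(u^C t^{L_0+C^2/2}\,\mathcal{T}\bigl[\psi_x(\sigma)\, \psi^*_y(\tau)\, \Gamma_-(a[1])\cdots \Gamma_+(b[2N])\bigr]\Bigr),
\]
where $\mathcal{T}$ denotes time-ordering. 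To get the generating series \eqref{eq:generating_function} I would introduce the fermion fields $\psi(\zeta) = \sum_{x \in \Z'} \psi_x \zeta^x$ and $\psi^*(\eta) = \sum_{y \in \Z'} \psi^*_y \eta^{-y}$, and apply the standard commutation relations between the vertex operators $\Gamma_\pm(a)$ and these fields---each such commutation contributes a factor of $H(a,\cdot)^{\pm 1}$---to move the fermion fields through all the vertex operators in the trace. The accumulated product of $H$ factors assembles exactly into $F(\sigma,\zeta)/F(\tau,\eta^{-1})$, and what remains is a sum over the charge sectors which evaluates to $\sum_m (\zeta\eta)^m u^m t^{m^2/2}$ modulated by a Fermi-Dirac occupation factor.

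The main obstacle will be the bookkeeping that separates the two cases in \eqref{eq:generating_function}. Whether $\sigma \le \tau$ or $\sigma > \tau$ controls the time-ordering of $\psi(\zeta)$ and $\psi^*(\eta)$ inside the cyclic trace, and this dictates both the overall sign and whether the resulting geometric series over the occupation of each single-particle state produces $(1+ut^m)^{-1}$ (the empty-state contribution) or its shifted counterpart $(1+(ut^m)^{-1})^{-1}$ (the filled-state contribution). Carefully tracking these signs through the fermion reordering, and verifying that the resulting formal power series converges in an appropriate annulus of $\zeta, \eta$, is where the care is needed.
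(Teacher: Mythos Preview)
Your approach is correct but differs substantially from the paper's. The paper does not rederive the result from scratch: it cites \cite[Theorem 2.2]{Bor07}, which already gives the generating series for the correlation kernel, but only for the sub-point process indexed by even $\tau$. To extend this to all $\tau$, the paper embeds the given periodic Schur process into a larger one with specializations $a[1],b[1],a[2],b[2],\ldots,a[2N],b[2N]$ (a $b$ at every index, not just even ones), applies \cite[Theorem 2.2]{Bor07} to that larger process, and then degenerates by setting the extra specializations to be trivial. The padding leaves the kernel formula unchanged except that $\wt{F}$ collapses to $F$.

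Your free-fermion route is the conceptually ``right'' derivation and is essentially how one would prove Borodin's theorem itself (or, more precisely, the variant in \cite{betea2019periodic}): the shift-mixing replaces a charge-zero trace by a full Fock-space trace, which is what makes Wick's theorem applicable and produces the Fermi--Dirac factors. This buys you a self-contained argument and explains \emph{why} the discrete Gaussian weight $u^S t^{S^2/2}$ is the unique choice that restores determinantal structure. The paper's approach is much shorter---a few lines of bookkeeping once \cite{Bor07} is granted---but treats the determinantal structure as a black box. Either is acceptable; just be aware that if you pursue yours, the trace version of Wick's theorem and the careful handling of the two time-orderings (which you correctly flag) are where the real work lies, and you should cite a source for the fermionic Wick formula in the trace setting rather than assert it.
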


\begin{remark} \label{remark:generating_function}
The relation~\eqref{eq:generating_function} should be interpreted with $\zeta$ and $\eta$ as formal variables. We may also view~\eqref{eq:generating_function} as an analytic function via the equality
\[ (t;t)_\infty^3 \frac{-\sqrt{z} \theta_3(uz;t)}{\theta_3(-ut^{-\frac{1}{2}};t) \theta_3(u;t)} = \begin{cases}
\displaystyle\sum_{m \in \Z'} \frac{z^m}{1 + (ut^m)^{-1}}, & 1 < |z| < |t|^{-1}, \\
\displaystyle- \sum_{m \in \Z'} \frac{z^m}{1 + ut^m}, & |t| < |z| < 1,
\end{cases}\]
see~\cite[Remark 2.4]{Bor07}. Note also that the condition $p_n(a[k]), p_n(b[l]) = O(R^n)$ is given to ensure that the sum \eqref{eq:conv_abs} converges absolutely, as above.
\end{remark}

\begin{proof}[Proof of \Cref{thm:kernel}]
In our notation, \cite[Theorem 2.2]{Bor07} gives a formula for the correlation kernel of the point process
\[ \{(2,S + \lambda_i^{(2)} - i + \tfrac{1}{2})\}_{i \ge 1} \cup \{(4,S + \lambda_i^{(4)} - i + \tfrac{1}{2})\}_{i \ge 1} \cup \cdots \cup \{(2N,S + \lambda_i^{(2N)} - i + \tfrac{1}{2})\}_{i \ge 1}. \]
In words, this is the sub-point process of~\eqref{eq:pt_process} corresponding to $\lambda^{(j)}$ for even $j$. To obtain the correlation kernel on the full point process~\eqref{eq:pt_process}, we need to degenerate from a larger shift-mixed periodic Schur process in order to apply~\cite[Theorem 2.2]{Bor07}. With this in mind, we introduce the following:

Let $a[1],a[2],\ldots,a[2N], b[1],\ldots,b[2N]$ be Schur-positive specializations, and define $\tilde{a}[i],\tilde{b}[j]$ as above for $i,j \in \Z$.

Consider the shift-mixed periodic Schur process which we label as
\[ \tilde{\lambda}^{(2N)} = \tilde{\lambda}^{(0)} \supset \mu^{(1)} \subset \tilde{\lambda}^{(1)} \supset \cdots \subset \tilde{\lambda}^{(2N-1)} \supset \mu^{(2N)} \subset \tilde{\lambda}^{(2N)} = \tilde{\lambda}^{(0)} \]
where the probability on $(\vec{\lambda},\vec{\mu},S)$ is proportional to
\[ u^{S} t^{S^2/2}  \cdot t^{|\tilde{\lambda}^{(0)}|} \prod_{i=1}^{2N} s_{\tilde{\lambda}^{(i-1)}/\mu^{(i)}}(a[i]) s_{\tilde{\lambda}^{(i)}/\mu^{(i)}}(b[i]). \]
By~\cite[Theorem 2.2]{Bor07}, the point process
\begin{align} \label{extended_pt_process}
\{(1,S + \wt{\lambda}_i^{(1)} - i + \tfrac{1}{2})\}_{i \ge 1} \cup \cdots \cup \{(2N,S + \tilde{\lambda}_i^{(2N)} - i + \tfrac{1}{2})\}_{i \ge 1}
\end{align}
is determinantal, and the generating series of the correlation kernel $\wt{K}(\sigma,x;\tau,y)$ is given by
\[ \sum_{x,y \in \Z'} \wt{K}(\sigma,x;\tau,y) \zeta^x \eta^y = \begin{cases}
\displaystyle \frac{\wt{F}(\sigma,\zeta)}{\wt{F}(\tau,\eta^{-1})} \sum_{m\in \Z'} \frac{(\zeta\eta)^m}{1 + (z t^m)^{-1}}, & \sigma \le \tau, \\
\displaystyle -\frac{\wt{F}(\sigma,\zeta)}{\wt{F}(\tau,\eta^{-1})} \sum_{m\in \Z'} \frac{(\zeta\eta)^m}{1 + z t^m}, & \sigma > \tau.
\end{cases} \]
where
\[ \wt{F}(\tau,z) = \frac{\prod_{i \le \tau} H(\tilde{b}[i],z)}{\prod_{i > \tau} H(\tilde{a}[i],z^{-1})}, \quad \quad i \in \Z.  \]
If we take $a[i]$ for even $i$ and $b[j]$ for odd $j$ to be trivial, then~\eqref{extended_pt_process} degenerates to~\eqref{eq:pt_process} and~$\wt{F}(\tau,z)$ to~$F(\tau,z)$. This completes the proof.
\end{proof}

\subsection{Lozenge tilings}\label{Sec:Dimers} 
We focus on the case where the specializations are given by a single-variable alpha specialization. In other words, $a[i],b[j]$ are the specializations corresponding to evaluation at $(a_i,0,\ldots,0)$, $(b_j,0,\ldots,0)$ respectively for some $a_i,b_j > 0$. With this choice, $\lambda^{(2i-1)} \prec \lambda^{(2i)} \succ \lambda^{(2i+1)}$ for each $i$, so the periodic Schur process is a measure on the set
\begin{align}
    \cP^{(2N)} := \{(\lambda^{(1)},\ldots,\lambda^{(2N)}) \in \Y^{2N}: \lambda^{(1)} \prec \lambda^{(2)} \succ \cdots \prec \lambda^{(2N)} \succ \lambda^{(1)} \}.
\end{align}
We will typically denote elements of $\cP^{(2N)}$ by $\vec{\lambda}$, and often identify them with point configurations on $\{1,\ldots,2N\} \times \Z'$ as in~\eqref{eq:unshift_pt_process}. These may also be identified with tilings of an infinite cylinder by lozenges of three types $\lloz,\rloz,\hloz$. Let us introduce this perspective in greater detail.

Consider the triangular lattice on a cylinder with finite circumference and biinfinite vertical height, such that each triangle has one vertical edge. The vertical edges, viewed as line segments, are all contained in a finite union of disjoint parallel lines along the cylinder; let the number of such lines be $2N$. A lozenge is a union of two triangles sharing an edge in this lattice, which can be of three types $\lloz,\rloz,\hloz$. Note that a tiling of a domain by lozenges of three types is determined by the positions of its horizontal lozenges. Let $\mathcal{T}_N$ denote the set of tilings in this domain such that all lozenges are eventually of type $\hloz$ sufficiently far down, and sufficiently far up all lozenges are alternating types $\lloz$ and $\rloz$ in odd and even columns respectively, see \Cref{fig:loz_empt}. If we view a point configuration on $\{1,\ldots,2N\} \times \Z'$ as in~\eqref{eq:unshift_pt_process} as giving the positions of horizontal lozenges, with the horizontal lozenges having coordinates as in \Cref{fig:loz_empt}, this determines a lozenge tiling on this cylinder. Thus any measure on interlacing point configurations on $\{1,\ldots,2N\} \times \Z'$, in particular the periodic Schur process with single-variable alpha specializations and its shift-mixed version, defines a measure on $\cT_N$.

\begin{figure}[ht]
    \centering
    \includegraphics[scale=0.3]{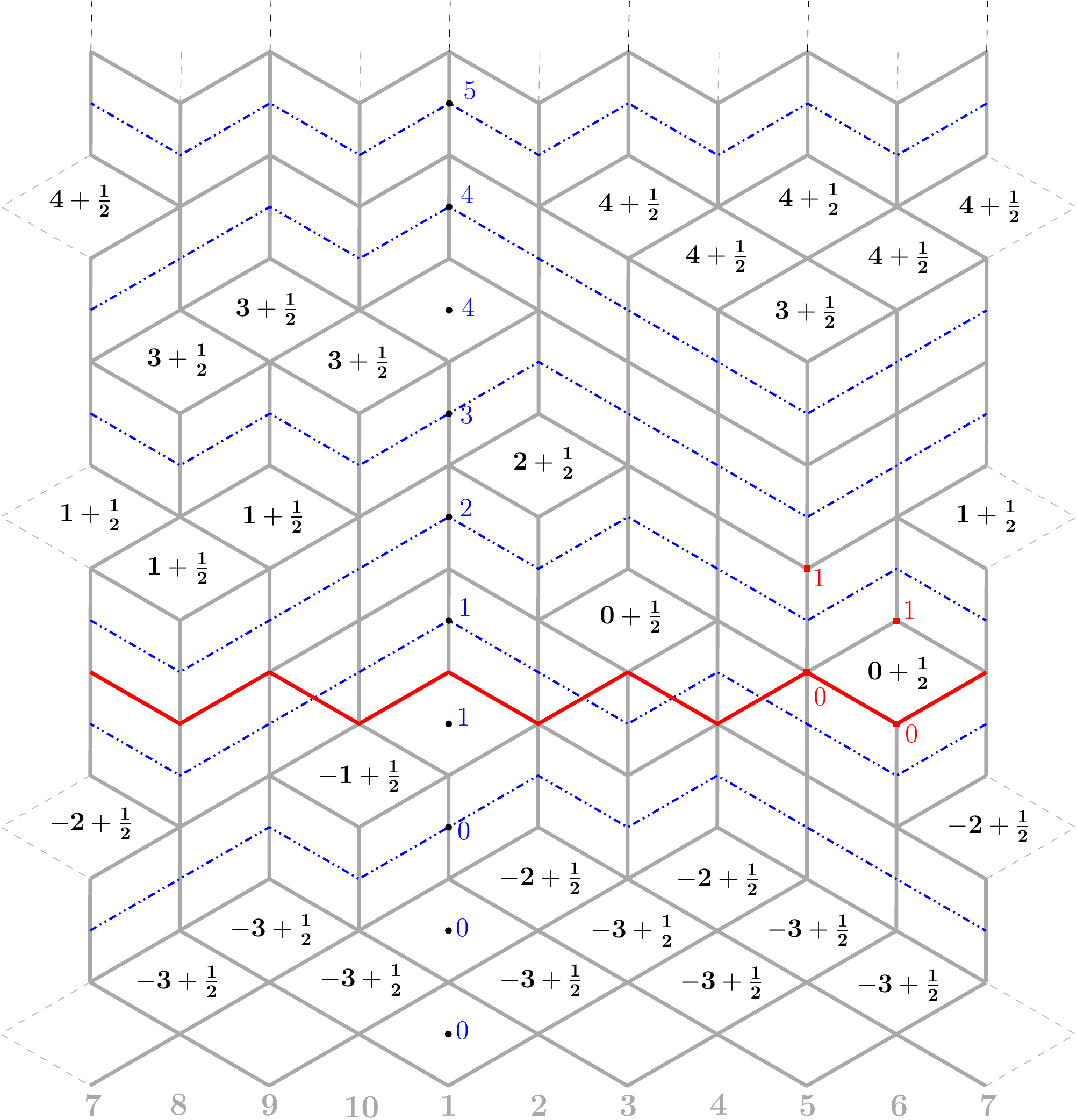}
    $\quad\quad\quad\quad\quad\quad$
     \includegraphics[scale=0.3]{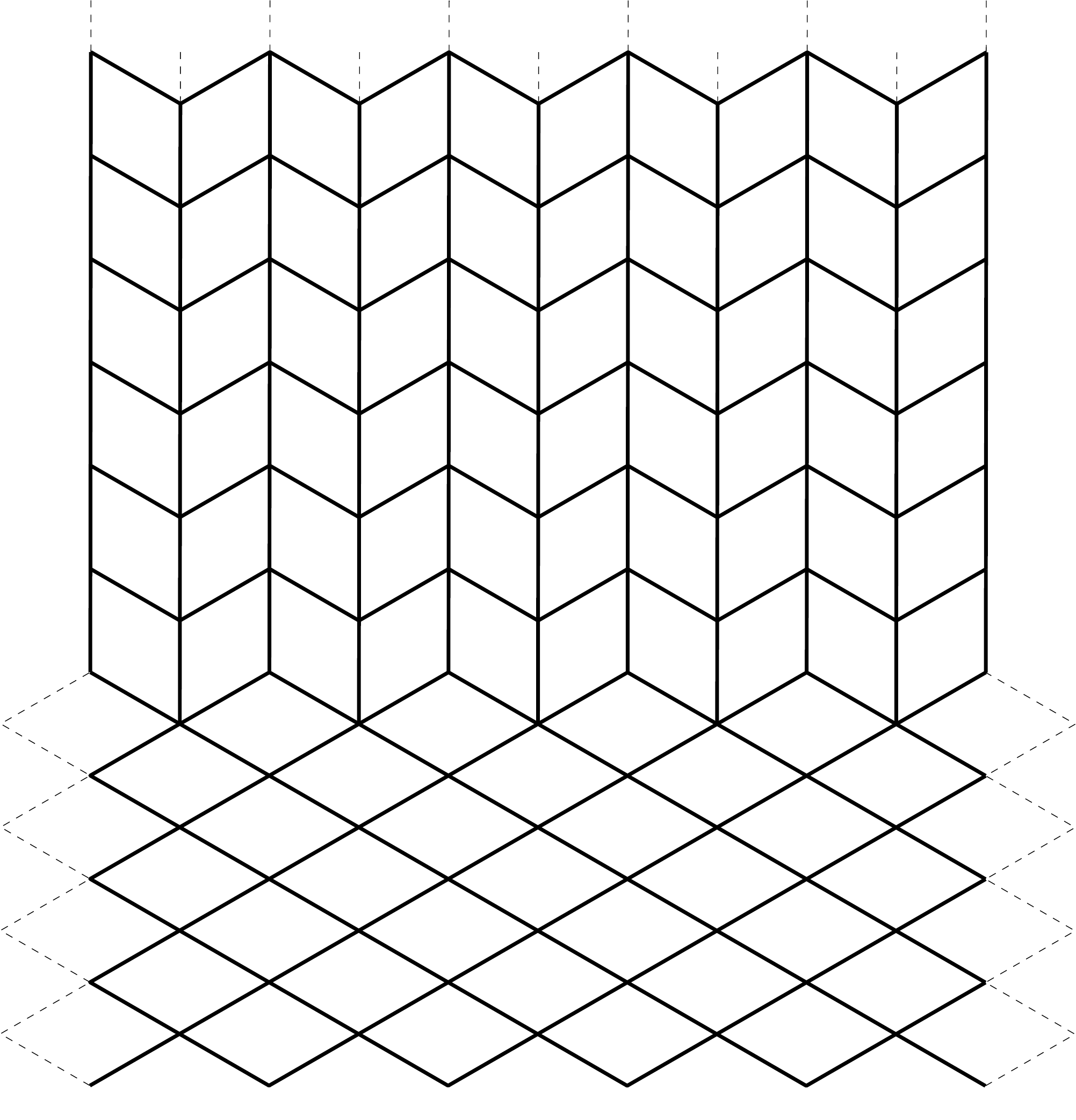}
    \caption{An example of a lozenge tiling of a cylinder (with $2N$ columns labeled in grey) and an empty room configuration (on the right). This example corresponds to $S=0.$ The vertical positions $x_i^j$ of the horizontal lozenges are shown in black, to get a corresponding pair $(\vec{\lambda},S)$ one should set $\lambda_i^{(j)}=x^j_i-\tfrac12-S+i$. We define coordinates of a horizontal lozenge at a position $x_i^j$ in column $j$ by $(j,x_i^j)$. The level lines of the height function shown in blue; the values of the height function are shown (in blue) in the first column. The red line on the left indicates the $y=0$ `axis', and the red $0$'s and $1$'s indicate points of $y$-coordinate $0$ and $1$ to highlight this.
    }
    \label{fig:loz_empt}
\end{figure}

However, the measure coming from the periodic Schur process without shift-mixing is in fact supported on a specific subset of $\cT_N$, as not all elements of $\cT_N$ correspond to elements of $\cP^{(2N)}$. The set $\cT_N$ is in fact in bijection with shifted cylindric partitions $\cP^{(2N)} \times \Z$, for reasons we now describe. Define the \emph{empty room} $M_{\operatorname{empty}} \in \cT_N$ to be the tiling in which all horizontal lozenges with negative vertical coordinate are present and no others, equivalently the tiling corresponding to the sequence of zero partitions $\vec{\lambda} = (\vec{0},\ldots,\vec{0}) \in \cP^{(2N)}$, shown on the right in \Cref{fig:loz_empt}. Then for each $M \in \cT_N$, there exists a constant $S(M)$ such that for any column $C$ one has
\[|\{\hloz\in (M \cap C) \,|\,\hloz\not\in (M_{\operatorname{empty}} \cap C )\}|-|\{\hloz\in (M_{\operatorname{empty}} \cap C) \,|\,\hloz\not\in (M \cap C) \}|=S(M)\]
independent of $C$. We invite the reader to check this on several columns of the tiling on the left of \Cref{fig:loz_empt}, for which $S(M) = 0$. If instead one considered the tiling given by translating this tiling up by $k$ units, the result would be a tiling with $S(M) = k$. Given a tiling $M$, viewed as a point configuration $\{x_i^j\}_{(i,j) \in \{1,\ldots,2N\} \times \Z_{\geq 0}}$ given by the positions $x_i^j$ of horizontal lozenges in the $j\tth$ column, there is a unique $\vec{\lambda} = (\lambda^{(1)},\ldots,\lambda^{(2N)}) \in \cP^{(2N)}$ such that 
\[
\lambda_i^{(j)} - i + \tfrac{1}{2} + S(M) = x_i^j
\]
for every $i,j$, so $S(M)$ is identified with the shift $S$ of an element $(\vec{\lambda},S) \in \cP^{(2N)} \times \Z$. Hence the map $M \mapsto (\vec{\lambda},S(M))$ defines a map $ \cT_N \to \cP^{(2N)} \times \Z$. Conversely, any $(\vec{\lambda},S) \in \cP^{(2N)} \times \Z$ specifies the positions of horizontal lozenges of an element of $\cT_N$ by~\eqref{eq:pt_process}. This defines a bijection $ \cT_N \leftrightarrow \cP^{(2N)} \times \Z$. 

Similarly, letting 
\[
\cT_N^0 := \{M \in \cT_N: S(M) = 0\},
\]
the above yields a bijection
\begin{align*}
    \cT_N^0 & \leftrightarrow \cP^{(2N)}.
\end{align*}
Hence the periodic Schur process with single-variable alpha specializations corresponds to a measure on $\cT_N^0$, while the shift-mixed periodic Schur process yields a measure on $\cT_N$. From the perspective of lozenge tilings of the cylinder, the set $\cT_N$, and hence the shift-mixed periodic Schur process, are arguably the more natural objects.

In what follows we often view elements of $\cT_N$ through their height functions, defined on midpoints of vertical edges of the triangular lattice, see \Cref{fig:loz_empt}. 
\begin{definition}\label{def:height_fn}
Given a fixed lozenge tiling $M \in \cT_N$, the corresponding \emph{height function} $h: \{1,\ldots,2N\} \times \Z' \to \Z_{\geq 0}$ is defined by
\begin{align} \label{eq:height_function}
h(\tau^\sharp,y^\sharp) := \sum_{x^\sharp\in\mathbb{Z}': x^\sharp < y^\sharp} \bbone[ \mbox{there is no lozenge of type $\hloz$ at $(\tau^\sharp,x^\sharp)$}],
\end{align}
where the coordinates of lozenges are as in \Cref{fig:loz_empt}.
\end{definition}

By the above discussion, the height function has the property that as $y^\sharp \to \infty$, $h(\tau^\sharp,-y^\sharp)$ stabilizes to~$0$ and $h(\tau^\sharp,y^\sharp) - y^\sharp - 1/2$ stabilizes to $S(M)$; this in fact gives an alternative definition of $S(M)$, which was how we introduced $S(M)$ in \Cref{sec:intro}. 

We now show the fact mentioned in \Cref{sec:intro} that the shift-mixed periodic Schur process with single-variable alpha specializations is equivalent to a local dimer model. Each lozenge is just a pair of adjacent equilateral triangles on a triangular lattice, and so we may identify these triangles with vertices of a hexagonal lattice on a cylinder, and their edges with the edges of this graph. Under this identification, $\cT_N$ corresponds to the set $\cM$ of dimer covers such that sufficiently far down on the cylinder every dimer is horizontal, and sufficiently far up, all dimers are of the other two diagonal types, northwest and northeast in odd and even columns respectively. The `empty room' corresponds to the dimer cover $M_{\empty}$ in which all horizontal edges~$e(\sigma^\sharp,x^\sharp)$ with $x^\sharp<0$, in the coordinates of \Cref{fig:dimer_weights}, are present, and all other edges are northwest and northeast in odd and even columns respectively. The dimer covers $\cM$ may be equivalently defined as those which differ from $M_{\empty}$ by a finite number of edges.

\begin{figure}[ht]
    \centering
    \includegraphics[scale=0.8]{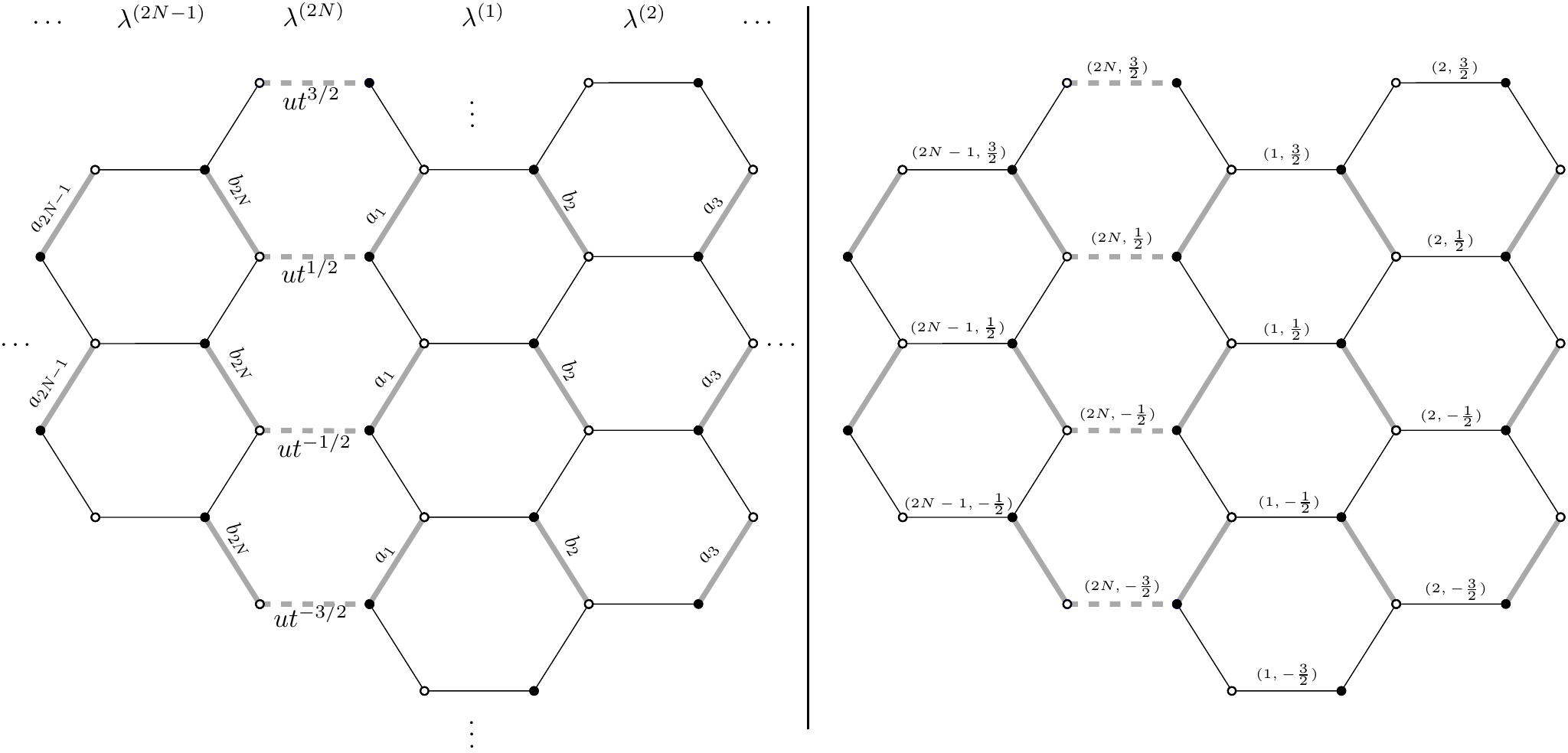}
    \caption{(Left) Dimer weights on honeycomb lattice; all weights not shown are equal to~$1$. (Right) Coordinates of horizontal edges on honeycomb lattice.}
    \label{fig:dimer_weights}
\end{figure}

Let $a_1,b_2,\ldots,a_{2N-1},b_{2N},t\in(0,1)$ and $u\in\mathbb{R}_{>0}$. Denote by $G_{2N} = G_{2N}(a_1,b_2,\ldots,a_{2N-1},b_{2N},t,u)$ the bipartite graph with edge weights as in \Cref{fig:dimer_weights}, and define the weight of a dimer cover $M \in \mathcal{M}$ to be
\begin{equation}\label{eq:weights}
    \nu(M) = \prod_{\text{edges }e \in G_{2N}} \nu(e)^{\bbone(e \in M) - \bbone(e \in M_{\operatorname{empty}})}.
\end{equation}
Given these weights, we define a probability measure on $\cM$ by $\Pr(M) = \frac{1}{Z}\nu(M)$ where 
\[Z = Z(a_1,a_2,\ldots,a_{2N-1},b_{2N},t,u) = \sum_{M \in \cM} \nu(M).\] 

\begin{remark}
Typically, probability measures in dimer models are defined in terms of weights $\prod_{e \in M}\operatorname{wt}(e)$ for some edge weights $\operatorname{wt}(e)$, but such probability measures are unchanged if all weights are multiplied by the same nonzero constant. Informally, our weights~\eqref{eq:weights} should be viewed as the weights
\[
\prod_{e \in M}\nu(e),
\]
which are divergent infinite products, regularized by the overall constant
\[
\prod_{e \in M_{\operatorname{empty}}}\nu(e)
\]
which is also a divergent infinite product.
\end{remark}

It is easy to check from \Cref{fig:dimer_weights} and~\eqref{eq:skew} that the weight $\nu(M)$ of a dimer configuration corresponding to $(\vec{\lambda},S) \in \cP^{(2N)} \times \Z$ is exactly
\begin{align*}
& u^{S}t^{|\lambda^{(2N)}|+S^2/2} s_{\lambda^{(2N)}/\lambda^{(1)}}(a_1)s_{\lambda^{(2)}/\lambda^{(1)}}(b_2) \cdots s_{\lambda^{(2N)}/\lambda^{(2N-1)}}(b_{2N}) \\
& \quad \quad =  u^{S}t^{|\lambda^{(2N)}|+S^2/2} \prod_{i=1}^N a_{2i-1}^{|\lambda^{(2i - 2)}| - |\lambda^{(2i - 1)}|} b_{2i}^{|\lambda^{(2i)}| - |\lambda^{(2i-1)}|}.
\end{align*}
Hence the dimer model on the hexagonal lattice on a cylinder with edge weights shown on~\Cref{fig:dimer_weights} defines the same measure on cylindric partitions as the shift-mixed periodic Schur process with the corresponding parameters. We note that for dimer models on finite domains, Kasteleyn theory yields that the edges of a dimer cover form a determinantal point process~\cite{kenyon1997local}, see also~\cite{kenyon2009lectures,gorin_2021}. 

\subsection{{$q^{\mvol}$} Measure on cylindric partitions}

In the remainder of the paper, we will be concerned with the $q^{\mvol}$ measure on cylindric partitions and its shift-mixed
version, which turns out to be very special cases of the (shift-mixed) periodic Schur process with single-variable alpha specializations. We will identify cylindric partitions $M \in \cT_N$ with sequences of partitions $(\vec{\lambda},S) \in \cP^{(2N)} \times \Z$, and identify $M \in \cT_N^0$ with $\vec{\lambda} \in \cP^{(2N)}$. We define the volume of a cylindric partition by 
\[ \operatorname{vol}(\vec{\lambda},S) := \sum_{i=1}^{2N} \left(|\lambda^{(i)}| + S^2/2 \right) \]
and write 
\[
\opvol(\vec{\lambda}) := \opvol(\vec{\lambda},0) =  \sum_{i=1}^{2N} |\lambda^{(i)}|
\]
for the volume of a cylindric partition with zero shift.

\begin{definition}
Let $q \in (0,1)$. The \emph{$q^{\mvol}$ measure} is the probability measure on $\cP^{(2N)}$ with
\[
\Pr(\vec{\lambda}) \propto q^{\mvol(\vec{\lambda})}.
\]
For $u \in \R_{>0}$, the \emph{shift-mixed $q^{\mvol}$ measure} with shift parameter~$u$ is the probability measure on $\cP^{(2N)} \times \Z$ with
\[
\Pr(\vec{\lambda},S) \propto u^S q^{\mvol(\vec{\lambda},S)}.
\]
\end{definition}

The usual and shift-mixed $q^\mvol$ measures are special cases of the usual and shift-mixed periodic Schur process. The analogue of this fact for ordinary plane partitions, and the tools it furnished for studying the $q^{\mvol}$ measure, was in fact the first application of Schur processes when they were introduced in~\cite{Oko00}.

\begin{lemma}\label{thm:q^vol}
Fix $q \in (0,1)$ and $u \in \R_{>0}$ as above. Then the $q^{\mbox{vol}}$ measure is the periodic Schur process with specializations
\[ a_{2i-1} = q^{2i - \frac{3}{2}}, \quad b_{2i} = q^{-(2i - \frac{1}{2})}, \quad t = q^{2N} \quad \quad i = 1,\ldots,N, \]
and the shift-mixed $q^{\mbox{vol}}$ measure with shift parameter $u$ is the shift-mixed periodic Schur process with the above specializations and shift parameter $u$.
\end{lemma}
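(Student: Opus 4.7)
The plan is a direct calculation: substitute the proposed specializations into the definition of the periodic Schur process and show the weight $W(\vec\lambda)$ collapses to $q^{\mvol(\vec\lambda)}$, up to the overall shift factor in the shift-mixed case. The key simplification is that for a single-variable alpha specialization $\rho = (a,0,0,\ldots)$, the skew Schur polynomial degenerates to
\[ s_{\lambda/\mu}(\rho) = a^{|\lambda|-|\mu|} \cdot \bbone(\mu \prec \lambda), \]
so under our specializations the periodic Schur weight on $\vec\lambda \in \cP^{(2N)}$ becomes
\[ W(\vec\lambda) = t^{|\lambda^{(2N)}|} \prod_{i=1}^N a_{2i-1}^{|\lambda^{(2i-2)}|-|\lambda^{(2i-1)}|}\, b_{2i}^{|\lambda^{(2i)}|-|\lambda^{(2i-1)}|} \]
with the convention $\lambda^{(0)} = \lambda^{(2N)}$.

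Setting $n_j := |\lambda^{(j)}|$ and plugging in $a_{2i-1} = q^{2i-3/2}$, $b_{2i} = q^{-(2i-1/2)}$, $t = q^{2N}$, the total exponent of $q$ in $W(\vec\lambda)$ becomes
\[ 2N n_{2N} + \sum_{i=1}^N (2i-\tfrac{3}{2})(n_{2i-2}-n_{2i-1}) - \sum_{i=1}^N (2i-\tfrac12)(n_{2i}-n_{2i-1}). \]
I will then verify that the coefficient of each $n_j$ equals $1$: for odd $j=2i-1$ the two sums contribute $-(2i-3/2)+(2i-1/2)=1$; for even $j=2k$ with $1\le k\le N-1$, the contributions are $(2(k+1)-3/2)$ from the first sum (at $i=k+1$) and $-(2k-1/2)$ from the second (at $i=k$), summing to $1$; and for $j=2N$ the $2N$ coming from $t^{n_{2N}}$ cancels against the second-sum contribution $-(2N-1/2)$ at $i=N$, leaving $1/2+1/2 = 1$ after adding the first-sum contribution at $i=1$ (using $\lambda^{(0)}=\lambda^{(2N)}$). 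Thus $W(\vec\lambda) = q^{\sum_j n_j} = q^{\mvol(\vec\lambda)}$ as required, proving the unshifted claim.

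For the shift-mixed statement, recall that by definition the shift-mixed periodic Schur process assigns $(\vec\lambda,S)$ probability proportional to $\Pr(S=x)\cdot W(\vec\lambda)$ with $S$ an independent discrete Gaussian of mass $u^{S} t^{S^2/2}/\theta_3(u;t)$ as in~\eqref{eq:v_distr}. Inserting $t = q^{2N}$ gives $t^{S^2/2} = q^{N S^2}$, so the joint weight is proportional to $u^S q^{N S^2}\cdot q^{\mvol(\vec\lambda)}$; since $\mvol(\vec\lambda,S) = \mvol(\vec\lambda) + N S^2$ by the definition $\mvol(\vec\lambda,S) = \sum_{i=1}^{2N}(|\lambda^{(i)}| + S^2/2)$, this equals $u^S q^{\mvol(\vec\lambda,S)}$, matching the shift-mixed $q^{\mvol}$ measure.

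There is really no obstacle here beyond bookkeeping; the only subtle point is the cyclic index shift $\lambda^{(0)} = \lambda^{(2N)}$, which is what makes the $t$ factor necessary to produce the correct coefficient of $n_{2N}$. The choice of exponents $a_{2i-1} = q^{2i-3/2}$ and $b_{2i}=q^{-(2i-1/2)}$ is dictated precisely by requiring the telescoping sum of coefficients to be identically $1$.
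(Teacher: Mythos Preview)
Your proof is correct and follows exactly the approach the paper indicates: a direct computation using the branching rule~\eqref{eq:skew}, which for a single-variable specialization gives $s_{\lambda/\mu}(a) = a^{|\lambda|-|\mu|}\bbone(\mu\prec\lambda)$. The paper's proof is just the one-line remark ``Direct computation by the branching rule~\eqref{eq:skew} for Schur functions,'' and you have simply carried out that computation in full, including the cyclic bookkeeping for the coefficient of $n_{2N}$ and the shift-mixed case.
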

\begin{proof}
Direct computation by the branching rule~\eqref{eq:skew} for Schur functions.
\end{proof}

In light of the previous subsection, it follows from \Cref{thm:q^vol} that the shift-mixed $q^{\mvol}$ measure is also given by a dimer model, though we do not use this fact. A fact that we will use, however, is that with the specializations in \Cref{thm:q^vol} one has
\begin{align} \label{eq:F_function}
F(\tau,z) = \frac{\prod_{i > \tau} (1 - q^i z^{-1})}{\prod_{j < \tau} (1 - q^{-j} z)}, \quad \quad i \in 2\Z + \tfrac{1}{2}, \quad j \in 2\Z - \tfrac{1}{2}
\end{align}
where $F$ is as in~\eqref{eq:def_F}.

\section{Moment formula}\label{sec:moment_formula}

In this section we derive a contour integral formula for certain Laplace transform observables of the~$q^{\mvol}$ height function, see \Cref{thm:analytic_r_moments}, using the general cylindric Schur process correlation kernel given previously in \Cref{thm:kernel}.
For a partition $\lambda$ and an integer $S$, define
\[
\tcF_r(\lambda,S) = \sum_{i \geq 1} r^{\lambda_i - i + 1 + S}
\]
and
\[
\tcF_r(\lambda) = \sum_{i \geq 1} r^{\lambda_i - i + 1 }
\]
if no $S$ is present in the argument.

\begin{proposition} \label{thm:shifted_observable_formula}

Let $u,t$ be real with $u \neq 0, -t^{\pm \frac{2\ell+1}{2}}$ for $\ell \in \Z$ and $t\in(0,1)$. Let {$a[1],b[2],\ldots,a[2N-1],$} $b[2N]$ be each finite sequences of real parameters in $(0,1)$. Then there exists a constant $\delta > 0$ in terms of these parameters such that, for all $r_1,\ldots,r_n \in \C$ with $1 < |r_i| < 1+\delta$,
\begin{align}\label{eq:shift-mixed_formal_moments}
\begin{split}
& \E_{\operatorname{shift}}\left[ \prod_{i=1}^n \tcF_{r_i}(\lambda^{(\tau_i)},S) \right] =  \left( \prod_{i=1}^n \frac{(t;t)_\infty^2}{(r_i t;t)_\infty (r_i^{-1}t;t)_\infty (1 - r_i^{-1})} \right) \\
& \quad \quad \times \frac{\theta_3(u \prod_{i=1}^n r_i; t)}{\theta_3(u;t)} \frac{1}{(2 \pi \bi)^n} \oint \cdots \oint \left(\prod_{1 \le i < j \le n} \frac{\theta_1(z_i/z_j;t) \theta_1(r_jr_i^{-1}z_i/z_j;t)}{\theta_1(r_jz_i/z_j;t) \theta_1(r_i^{-1}z_i/z_j;t)} \prod_{i=1}^n \frac{F(\tau_i,z_i)}{F(\tau_i,r_i^{-1}z_i)}\right) \frac{dz_i}{z_i}
\end{split}
\end{align}
where the contours are chosen such that 
\begin{align}\label{eq:contour_conditions}
\begin{split}
    \max(|r_j^{-1}z_j|, |r_i z_j|) < |z_i| < t^{-1} \min(|r_j^{-1}z_j|,|r_iz_j|) \quad \quad \quad \text{for all }1 \leq i < j \leq n\\
    |r_i| \max_{\substack{j > \tau_i \\ a \in a[j]}} a  < |z_i| < \min_{\substack{j \leq  \tau_i\\ b \in b[j]}}  b^{-1} \quad \quad \quad \text{for all }1 \leq i \leq n.
    \end{split}
\end{align}
\end{proposition}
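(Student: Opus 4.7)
The strategy is to leverage the determinantal structure of the shift-mixed periodic Schur process given by Theorem~\ref{thm:kernel}, together with the contour-integral representation of the correlation kernel in terms of $F$ and the theta-like function $\Phi(z):=(t;t)_\infty^3\cdot(-\sqrt{z})\,\theta_3(uz;t)/[\theta_3(-ut^{-1/2};t)\theta_3(u;t)]$ from Remark~\ref{remark:generating_function}. After the substitution $x = S+\lambda_j-j+\tfrac12$, each observable is a linear statistic of the shifted point process $X$ on $\{1,\ldots,2N\}\times\Z'$,
\[
\tcF_{r_i}(\lambda^{(\tau_i)},S) \;=\; r_i^{1/2}\sum_{x:(\tau_i,x)\in X}r_i^{\,x},
\]
so the desired expectation is a joint moment of $n$ linear statistics of a determinantal point process. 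Applying the moment--cumulant formula $\E[L_1\cdots L_n]=\sum_{\pi\in P_n}\prod_{B\in\pi}\kappa_B$ and the fact that cumulants of DPP linear statistics are signed sums over cyclic orderings of traces $\operatorname{Tr}(M_{f_{j_1}}KM_{f_{j_2}}K\cdots M_{f_{j_{|B|}}}K)$ with $M_f$ the operator of multiplication by~$f$, the left-hand side becomes a combinatorial sum of such traces. Coincidences (when several $\tau_i$ agree) can be handled either by a small perturbation of the $r_i$'s followed by analytic continuation, or by collapsing $\mathbbold{1}^2=\mathbbold{1}$ within blocks of constant $\tau$ and tracking the resulting reduction of correlation functions.

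The next step converts each trace into a contour integral using the generating series~\eqref{eq:generating_function}. For a single factor one has
\[
\sum_{x}r^{\,x}K(\tau,x;\tau,x) \;=\; \frac{\Phi(r)}{2\pi\bi}\oint\frac{F(\tau,z)}{F(\tau,r^{-1}z)}\frac{dz}{z},
\]
obtained by substituting $\zeta=z,\eta=r/z$ in~\eqref{eq:generating_function}, using that $\zeta\eta=r$ lies in the convergence annulus $1<|r|<|t|^{-1}$ guaranteed by the hypothesis $1<|r_i|<1+\delta$, and extracting the $x=y$ residue via $\oint dz/z$. Chaining the same substitution cyclically around a $k$-cycle $(j_1,\ldots,j_k)$ produces a $k$-fold contour integral whose integrand is $\Phi(r_{j_1})\cdots\Phi(r_{j_k})$ times a product of ratios $F(\tau_{j_a},z_a)/F(\tau_{j_{a+1}},r_{j_a}^{-1}z_{a+1})$. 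The double inequalities in~\eqref{eq:contour_conditions} arise precisely from combining the convergence conditions for $F(\tau,\cdot)$ (dictated by the specializations $a[j],b[j]$) with the conditions $1<|\zeta\eta|<|t|^{-1}$ for each pair $(\zeta,\eta)$ arising in the substitution.

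The final step is to recombine the sum over set partitions and cyclic orderings into the single $n$-fold integrand of~\eqref{eq:shift-mixed_formal_moments}. This is carried out via a Frobenius--Cauchy identity for $\theta_1$ of the form
\[
\det\!\left[\frac{\theta_1(u\cdot y_i/w_j;t)}{\theta_1(u;t)\,\theta_1(y_i/w_j;t)}\right]_{i,j=1}^{n} = \frac{\theta_1\!\big(u\prod_iy_i/\prod_jw_j;t\big)\prod_{i<j}\theta_1(y_i/y_j;t)\,\theta_1(w_j/w_i;t)}{\theta_1(u;t)^{\,n-1}\prod_{i,j}\theta_1(y_i/w_j;t)},
\]
applied with $y_i=z_i$ and $w_i=r_i^{-1}z_i$; this collapses the combinatorial sum to the Cauchy-like ratio $\prod_{i<j}\theta_1(z_i/z_j;t)\theta_1(r_jr_i^{-1}z_i/z_j;t)/[\theta_1(r_jz_i/z_j;t)\theta_1(r_i^{-1}z_i/z_j;t)]$ visible in~\eqref{eq:shift-mixed_formal_moments}. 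The prefactor $\theta_3(u\prod_ir_i;t)/\theta_3(u;t)$ is obtained from collecting the $n$ copies of $\Phi(r_i)$ and using the identity $\theta_3(-ut^{-1/2};t)=(t;t)_\infty(u;t)_\infty(t/u;t)_\infty$ together with the quasi-periodicity $\theta_3(ux;t)/\theta_3(u;t)$ that emerges when shifting the argument of $\theta_3$ by $\log(\prod r_i)$; the Pochhammer factors $(t;t)_\infty^2/[(r_it;t)_\infty(r_i^{-1}t;t)_\infty(1-r_i^{-1})]$ arise from rewriting $r_i^{1/2}\Phi(r_i)$ via $\theta_1(r_i;t)=(r_i^{1/2}-r_i^{-1/2})(t;t)_\infty(r_it;t)_\infty(r_i^{-1}t;t)_\infty$ from~\eqref{eq:theta1}.

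The \emph{main obstacle} is the theta-Frobenius identification in the last paragraph: one must match the sum over set partitions of cyclic-trace integrals with the expansion of a single determinant in the Frobenius identity, keeping careful track of signs and of the reduction from many $k$-fold integrals to a single $n$-fold integral. A useful consistency check along the way is the $n=1$ case, in which only the trivial partition contributes and the formula reduces to the direct computation sketched above for $\E[\tcF_r(\lambda^{(\tau)},S)]$; verifying this case pins down the precise form of the scalar prefactor and confirms that the contour conditions in~\eqref{eq:contour_conditions} are non-empty (giving the existence of $\delta>0$).
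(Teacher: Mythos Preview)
Your approach is sound and lands at the same Frobenius step as the paper, but the moment--cumulant detour is unnecessary. Since the $\tau_i$ are first taken distinct, the points $(\tau_i,x_i)$ never collide, so the paper writes directly
\[
\E_{\operatorname{shift}}\Big[\prod_i\tcF_{r_i}(\lambda^{(\tau_i)},S)\Big]
=\sqrt{r_1\cdots r_n}\sum_{x_1,\ldots,x_n\in\Z'}\Big(\prod_i r_i^{x_i}\Big)\det\big[K(\tau_i,x_i;\tau_j,x_j)\big]_{i,j},
\]
expands the determinant over $\sigma\in S_n$, substitutes $\zeta=z_i$, $\eta=r_{\sigma(i)}z_{\sigma(i)}^{-1}$ into the generating series~\eqref{eq:generating_function} for each factor, and recombines the $\sum_\sigma\sgn(\sigma)$ back into the determinant
\[
\det\Big[\tfrac{\theta_3(u\,r_jz_i/z_j;t)}{\theta_3(-t^{-1/2}r_jz_i/z_j;t)\,\theta_3(u;t)}\Big]_{i,j=1}^n,
\]
which the elliptic Cauchy (Frobenius) identity evaluates. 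Your path through cumulants is the same sum reorganized by cycle type: because $\sgn(\sigma)=\prod_{\text{cycles }c}(-1)^{|c|-1}$, summing over set partitions times cyclic orderings within blocks reproduces the full $\sum_{\sigma\in S_n}$, which you must then recollapse into the above determinant to invoke Frobenius. So the cumulant step is a round trip; it buys nothing over the direct determinant expansion.

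Two points in your sketch need repair. First, the claim that the factor $\theta_3(u\prod_i r_i;t)/\theta_3(u;t)$ comes from ``collecting the $n$ copies of $\Phi(r_i)$'' is not right: for $\sigma\neq\mathrm{id}$ the argument of $\Phi$ is $r_{\sigma(i)}z_i/z_{\sigma(i)}$, which depends on the integration variables and sits inside the integral as an entry of the determinant. The scalar $\theta_3(u\prod r_i;t)/\theta_3(u;t)$ appears only \emph{after} Frobenius is applied, via the numerator $\theta$-value at $\prod y_i/\prod w_i=\prod r_i$; the Pochhammer prefactors then come from the diagonal $\theta_1(r_i;t)$ factors produced by Frobenius, exactly as in the paper's final simplification. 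Second, handling coinciding $\tau_i$ by ``a small perturbation of the $r_i$'s'' does not work: the obstruction is that $\rho_n=\det K$ fails when two of the lattice points $(\tau_i,x_i)$ coincide, and perturbing $r_i$ does nothing to separate them. The paper instead pads the specialization sequence with two trivial specializations at each collision site, making the $\tau_i$ formally distinct while leaving $F(\tau_i,\cdot)$---hence both sides of~\eqref{eq:shift-mixed_formal_moments}---unchanged.
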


\begin{remark}
    \noindent(i) The reason for the condition $1 < |r_i| < 1+\delta$ is that if $|r_i| \leq 1$ the series defining $\tcF$ does not converge, and if the $|r_i|$ are too large then there may not exist contours satisfying~\eqref{eq:contour_conditions}. It would be possible to write more explicit sufficient conditions on how large $\delta$ can be in terms of the parameters based on conditions used in the proof, but since we only need existence of $\delta$ we have avoided this for simplicity.\\
    \noindent(ii) The reason for the conditions~\eqref{eq:contour_conditions} is to ensure that certain Laurent series expansions encountered in the proof are valid.\\
    \noindent(iii) In the condition $\max(|r_j^{-1}z_j|, |r_i z_j|) < |z_i| < t^{-1} \min(|r_j^{-1}z_j|,|r_iz_j|) $ it is readily apparent that the maximum and minimum are always achieved by the second and first argument respectively when $|r_i|,|r_j| > 1$. The reason for this apparent redundancy is that later, in \Cref{thm:observable_formula} we will analytically continue to $r_i$ with $|r_i| \leq 1$, and the other conditions will become relevant.
\end{remark}

\begin{proof}[Proof of {\Cref{thm:shifted_observable_formula}}]
We first prove the statement for the shift-mixed Schur process in the restricted setting $1 \leq \tau_1 < \cdots < \tau_n \leq 2N$. It is straightforward from the definition of $\tcF$ that
\[
\tcF_{r_i}(\lambda^{(\tau_i)},S) = \sum_{x_i \in \Z'} \bbone(\tau_i,x_i) r_i^{x_i+1/2}
\]
where we use the shorthand $\bbone(\tau_i,x_i) = \bbone(\text{there exists a horizontal lozenge at }(\tau_i,x_i))$. Then 
\begin{align}\label{eq:observable_to_cor_fn}
\begin{split}
\E_{\operatorname{shift}}\left[ \prod_{i=1}^n \tcF_{r_i}(\lambda^{(\tau_i)},S) \right] &= \sum_{x_1,\ldots,x_n \in \Z'} \E_{\operatorname{shift}}\left[\prod_{i=1}^n r_i^{x_i+1/2}\bbone(\tau_i,x_i)\right]\\ 
&= \sum_{x_1,\ldots,x_n \in \Z'} \prod_{i=1}^n r_i^{x_i+1/2} \rho^{shift}_n(\tau_1,x_1; \ldots; \tau_n,x_n)
\end{split}
\end{align}
where in the first equality we have used that the sum is absolutely convergent.
 
Expanding the correlation function in terms of the correlation kernel by \Cref{thm:kernel}, this is equal to 
\begin{align*}
& \sum_{x_1,\ldots,x_n \in \Z'} \prod_{i=1}^n r_i^{x_i+1/2} \det\left(K(\tau_i,x_i; \tau_j,x_j)\right)_{1 \leq i,j \leq n} \\
&= \sum_{\sigma \in S_n} \sgn(\sigma) \sum_{x_1,\ldots,x_n \in \Z'} \prod_{i=1}^n r_i^{x_i+1/2} K(\tau_i,x_i; \tau_{\sigma(i)},x_{\sigma(i)}) \\
 &= \sqrt{r_1 \cdots r_n}\sum_{\sigma \in S_n} \sgn(\sigma) \left(\prod_{i=1}^n \sum_{y_i,x_i \in \Z'} z_i^{x_i} (r_{\sigma(i)}z_{\sigma(i)}^{-1})^{y_{\sigma(i)}} K(\tau_i,x_i; \tau_{\sigma(i)}, y_{\sigma(i)})\right)[z_1^0 \cdots z_n^0]
 \end{align*}
 where $[z_1^0 \cdots z_n^0]$ denotes the constant term in the Laurent series. By \Cref{thm:kernel} and \Cref{remark:generating_function}, we have that if $\zeta,\eta$ satisfy the conditions
 \begin{equation}\label{eq:anal_conditions_theta}
 \begin{cases}
 1 < |\zeta \eta| < |t|^{-1} & \sigma \leq \tau \\
 |t| < |\zeta \eta| < 1 & \sigma > \tau 
 \end{cases}
  \end{equation}
  and 
\begin{align}\label{eq:zeta_F_restriction}
 |\zeta| < b^{-1} \text{ for all }b \in b[j], j \leq \sigma \\
 |\eta| < a^{-1} \text{ for all }a \in a[j], j > \tau \label{eq:eta_F_restriction}
\end{align}
then
 \[
 \sum_{x,y \in \Z'} K(\sigma,x;\tau,y)\zeta^x \eta^y = -\frac{F(\sigma,\zeta)}{F(\tau,\eta^{-1})} (t;t)_\infty^3 \frac{\sqrt{\zeta \eta} \theta_3(u\zeta \eta;t)}{\theta_3(-t^{-1/2}\zeta \eta;t) \theta_3(u;t)}.
 \]
 The analytic conditions above are required for certain Laurent series expansions to be valid. Specifically,~\eqref{eq:anal_conditions_theta} guarantees the Laurent series expansion of \Cref{remark:generating_function} for 
 \[
 \frac{\sqrt{\zeta \eta} \theta_3(u\zeta \eta;t)}{\theta_3(-t^{-1/2}\zeta \eta;t) \theta_3(u;t)}
 \]
while~\eqref{eq:zeta_F_restriction} (resp.~\eqref{eq:eta_F_restriction}) guarantees that $\zeta$ (resp. $\eta^{-1}$) lies outside a circle containing the poles of $F(\sigma,\zeta)$ (resp. $F(\tau,\eta^{-1})^{-1})$).

Substituting this in with $\zeta = z_i, \eta = r_{\sigma(i)}z_{\sigma(i)}^{-1}$ for $z_i$ complex numbers satisfying the hypotheses~\eqref{eq:contour_conditions} on $|z_i|$, one has that $\zeta,\eta$ satisfy the conditions above. For $\delta$ sufficiently small so that $|r_i|$ is sufficiently close to $1$, it is easy to see that such $|z_i|$ must exist---though again we note as a caution that if the $r_i$ are too large this may \emph{not} be true, hence the necessity to have $|r_i| < 1+\delta$. Hence we obtain 
 \begin{align}
 &\sum_{\sigma \in S_n} \sgn(\sigma) \frac{1}{(2 \pi \bi)^n} \oint \cdots \oint \prod_{i=1}^n \frac{F(\tau_i,z_i)}{F(\tau_{\sigma(i)},r_{\sigma(i)}^{-1}z_{\sigma(i)})} \frac{-r_i (t;t)_\infty^3\theta_3(u r_{\sigma(i)}z_i/z_{\sigma(i)};t)}{\theta_3(-t^{-1/2}r_{\sigma(i)}z_i/z_{\sigma(i)};t)\theta_3(u;t) } \frac{dz_i}{z_i} \\ \label{eq:substitute_det}
 &= (-1)^n (t;t)_\infty^{3n} \frac{1}{(2 \pi \bi)^n} \oint \cdots \oint \prod_{i=1}^n r_i \frac{F(\tau_i,z_i)}{F(\tau_i,r_i^{-1}z_i)} \det\left(\frac{\theta_3(u r_j z_i/z_j;t)}{\theta_3(-t^{-1/2}r_j z_i/z_j;t)\theta_3(u;t)}\right)_{1 \leq i,j \leq n} \frac{dz_i}{z_i}
 \end{align}
 where the contours are as in the proposition statement. 
 By the elliptic Cauchy determinant identity of Frobenius~\cite{frobenius1882ueber} as used in~\cite[p. 18]{Bor07}, we have
 \begin{align*}
     &\det\left(\frac{\theta_3(u  r_jz_i/z_j;t)}{\theta_3(-t^{-1/2}r_jz_i/z_j;t)\theta_3(u;t)}\right)_{1 \leq i,j \leq n} \\
     & \quad \quad \quad \quad 
     =  \frac{(-1)^n\theta_3(u \prod_{i=1}^n r_i; t)}{\theta_3(u;t) (r_1 \cdots r_n)^{1/2}} \prod_{1 \leq i < j \leq n} \frac{\theta_1(z_i/z_j;t) \theta_1(\frac{r_i}{r_j}z_j/z_i;t)}{\theta_1(r_jz_i/z_j;t)\theta_1(r_iz_j/z_i;t)} \prod_{i=1}^n \frac{1}{\theta_1(r_i;t)}.
 \end{align*}
Substituting this into~\eqref{eq:substitute_det} we obtain
 \begin{align}\label{eq:final_shifted_observable}
 \begin{split}
     \E_{\operatorname{shift}}\left[ \prod_{i=1}^n \tcF_{r_i}(\lambda^{(\tau_i)},S) \right] &= \frac{\theta_3(u \prod_{i=1}^n r_i; t)}{\theta_3(u;t)} \left( \prod_{i=1}^n \frac{(t;t)_\infty^3 r_i^{1/2}}{\theta_1(r_i;t)} \right) \\
     & \times \frac{1}{(2 \pi \bi)^n} \oint \cdots \oint \prod_{i=1}^n \frac{F(\tau_i,z_i)}{F(\tau_i,r_i^{-1}z_i)}  \prod_{1 \leq i < j \leq n} \frac{\theta_1(z_i/z_j;t) \theta_1(\frac{r_i}{r_j}z_j/z_i;t)}{\theta_1(r_jz_i/z_j;t)\theta_1(r_iz_j/z_i;t)} \frac{dz_i}{z_i}.
     \end{split}
 \end{align}
 The final result~\eqref{eq:shift-mixed_formal_moments} follows upon noting that 
 \[
 \frac{(t;t)_\infty^3 r_i^{1/2}}{\theta_1(r_i;t)} = \frac{(t;t)_\infty^2}{(r_it;t)_\infty (r_i^{-1}t;t)_\infty (1-r_i^{-1})}
 \]
 and applying $\theta_1(z^{-1};t) = -\theta_1(z;t)$ to half of the $\theta_1$ functions in the product over $i<j$.
Having shown~\eqref{eq:shift-mixed_formal_moments} when $\tau_1 < \ldots < \tau_n$, we illustrate how to extend to the case where $\tau_\ell = \tau_{\ell+1}$ for some $1 \leq \ell \leq n-1$ and all other $\tau_i$ are distinct. The general case $\tau_1 \leq \cdots \leq \tau_n$ follows by induction with essentially the same manipulation. Let
 \[
 \ttau_i = \ttau_i(\ell) = 
 \begin{cases}
 \tau_i & 1 \leq i \leq \ell \\
 \tau_i + 2 & \ell+1 \leq i \leq n
 \end{cases}.
 \]
 Recall that $F(\sigma,z)$ depends implicitly on the sequence of specializations $a[1],b[2],\ldots,a[2N-1],b[2N]$, and let $\tF(\sigma,z)$ be the same function defined with specializations padded with two extra zeroes at the $\ell\tth$ place, i.e. specializations $a[1],b[2],\ldots,a[\tau_\ell],0,0,b[\tau_\ell+1],\ldots,a[2N-1],b[2N]$ if $\ell$ is odd and $a[1],b[2],\ldots,b[\tau_\ell],0,0,a[\tau_\ell+1],\ldots,a[2N-1],b[2N]$ if $\ell$ is even. Since $H(0,z)=1$, it follows from the definition of $F$ that
 \[ \tF(\ttau_i,z) = F(\tau_i,z).\]
 Note that the functions $F$ are the only part of~\eqref{eq:final_shifted_observable} which depend on the sequence of specializations $a[1],b[2],\ldots,a[2N-1],b[2N]$. The proof that~\eqref{eq:shift-mixed_formal_moments} holds for $1 \leq \tau_1 < \cdots < \tau_\ell = \tau_{\ell+1} < \cdots < \tau_n \leq 2N$ is now complete upon noticing that $\lambda^{(\tau_\ell)} = \lambda^{(\tau_\ell+2)}$ when the sequence of specializations contains zeros in the places $\ell+1,\ell+2$ as above. The general case where many $\tau_i$ are equal follows by the same padding with trivial specializations argument.
 \end{proof}
 
 It is easy to deduce from \Cref{thm:shifted_observable_formula} the analogue for the unshifted Schur process.
 
 \begin{corollary}\label{thm:observable_formula}
 Let partitions $\lambda^{(i)}, 1 \leq i \leq 2N$ be distributed by the unshifted Schur process with specializations as in \Cref{thm:shifted_observable_formula}. Then 
\begin{align}\label{eq:unshifted_formal_moments}
\begin{split}
& \E\left[ \prod_{i=1}^n \tcF_{r_i}(\lambda^{(\tau_i)}) \right] =  \left( \prod_{i=1}^n \frac{(t;t)_\infty^2}{(r_i t;t)_\infty (r_i^{-1}t;t)_\infty (1 - r_i^{-1})} \right) \\
& \quad \quad \times \frac{1}{(2 \pi \bi)^n} \oint \cdots \oint  \left(\prod_{1 \le i < j \le n} \frac{\theta_1(z_i/z_j;t) \theta_1(r_jr_i^{-1}z_i/z_j;t)}{\theta_1(r_jz_i/z_j;t) \theta_1(r_i^{-1}z_i/z_j;t)} \prod_{i=1}^n \frac{F(\tau_i,z_i)}{F(\tau_i,r_i^{-1}z_i)}\right) \frac{dz_i}{z_i}
\end{split}
\end{align}
with the same contours as in \Cref{thm:shifted_observable_formula}.
 \end{corollary}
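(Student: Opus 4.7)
The plan is to deduce the unshifted formula from the shift-mixed one in \Cref{thm:shifted_observable_formula} by exploiting the product structure of the shift-mixed measure. Recall that under the shift-mixed periodic Schur process, the random pair $(\vec{\lambda},S)$ is distributed as the product of the unshifted periodic Schur process on $\vec{\lambda}$ and an independent discrete Gaussian $S$ with $\Pr(S=x) = u^x t^{x^2/2}/\theta_3(u;t)$, as stated in \Cref{Sec:model}. The unshifted expectation should then be extracted by dividing out the contribution of $S$.

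First I would observe from the definition $\tcF_r(\lambda,S) = \sum_{i \ge 1} r^{\lambda_i - i + 1 + S}$ the identity
\[
\tcF_{r_i}(\lambda^{(\tau_i)},S) = r_i^S \, \tcF_{r_i}(\lambda^{(\tau_i)}),
\]
which holds provided $|r_i|>1$ so that both sums converge absolutely. Multiplying these identities over $i=1,\ldots,n$ gives
\[
\prod_{i=1}^n \tcF_{r_i}(\lambda^{(\tau_i)},S) \;=\; (r_1 \cdots r_n)^S \prod_{i=1}^n \tcF_{r_i}(\lambda^{(\tau_i)}).
\]
Next, using the independence of $S$ and $\vec{\lambda}$ under the shift-mixed measure, I would factor the expectation as
\[
\E_{\operatorname{shift}}\!\left[\prod_{i=1}^n \tcF_{r_i}(\lambda^{(\tau_i)},S)\right] = \E\!\left[(r_1 \cdots r_n)^S\right] \cdot \E\!\left[\prod_{i=1}^n \tcF_{r_i}(\lambda^{(\tau_i)})\right],
\]
where the second expectation on the right is with respect to the unshifted Schur process. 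Then a direct computation using the product form~\eqref{eq:theta3} of $\theta_3$ (or rather the series form) yields
\[
\E\!\left[(r_1 \cdots r_n)^S\right] = \sum_{x \in \Z} \frac{u^x t^{x^2/2}}{\theta_3(u;t)} (r_1 \cdots r_n)^x = \frac{\theta_3(u r_1 \cdots r_n;t)}{\theta_3(u;t)}.
\]

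Finally I would substitute these into~\eqref{eq:shift-mixed_formal_moments} and observe that the prefactor $\theta_3(u \prod_i r_i;t)/\theta_3(u;t)$ on the right-hand side exactly cancels, so that after dividing we obtain~\eqref{eq:unshifted_formal_moments} with the same contours. There is no real obstacle here: the only point requiring care is ensuring that all series (the $\tcF_{r_i}$, the theta series, and the sum defining the expectation) converge absolutely in the parameter regime, so that Fubini can be applied to justify the factorization of the expectation. This is immediate in the regime $1 < |r_i| < 1+\delta$ with $t \in (0,1)$ and the hypothesis $u \ne 0, -t^{\pm(2\ell+1)/2}$ (which ensures $\theta_3(u;t)\ne 0$ so the division is legitimate).
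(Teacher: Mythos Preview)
Your argument is correct. The route differs slightly from the paper's, so a brief comparison is in order.

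The paper invokes \cite[Prop.~2.1]{Bor07}, which says that the unshifted correlation function is the constant term in $u$ of $\theta_3(u;t)$ times the shift-mixed one. Passing to the moment identity~\eqref{eq:observable_to_cor_fn} and substituting~\eqref{eq:shift-mixed_formal_moments}, the only $u$-dependence sits in the factor $\theta_3(u\prod_i r_i;t)$, whose constant term in $u$ is $1$; this kills exactly the prefactor $\theta_3(u\prod_i r_i;t)/\theta_3(u;t)$ and gives~\eqref{eq:unshifted_formal_moments}.

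You instead argue probabilistically: use $\tcF_{r_i}(\lambda^{(\tau_i)},S)=r_i^S\tcF_{r_i}(\lambda^{(\tau_i)})$, then factor the shift-mixed expectation via the independence of $S$ and $\vec{\lambda}$, compute $\E[(\prod_i r_i)^S]=\theta_3(u\prod_i r_i;t)/\theta_3(u;t)$, and divide. This is equally short and arguably more transparent; it bypasses the correlation-function formalism entirely. The one extra care point in your version is that the division requires $\theta_3(u\prod_i r_i;t)\ne 0$. Since the unshifted statement does not involve $u$, you may simply fix any $u>0$; then for $r_i$ real with $1<r_i<1+\delta$ the product $u\prod_i r_i$ is positive and $\theta_3$ is strictly positive there, after which analyticity in the $r_i$ extends the identity to complex $r_i$ in the stated annuli. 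The paper's constant-term extraction sidesteps this issue, but at the cost of quoting an external result.
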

 \begin{proof}
 
By~\cite[Prop. 2.1]{Bor07} we have 
 \begin{equation*}
     \rho_n(\tau_1,x_1; \ldots; \tau_n,x_n) = (\text{constant term in $u$ of }\theta_3(u;t)\rho_n^{shift}(\tau_1,x_1; \ldots; \tau_n,x_n)).
 \end{equation*}
 Hence for any $1 \leq \tau_1 \leq \cdots \leq \tau_n \leq 2N$, by~\eqref{eq:observable_to_cor_fn} (which holds for both shift-mixed and unshifted measures) we have
\[
 \E\left[ \prod_{i=1}^n \tcF_{r_i}(\lambda^{(\tau_i)}) \right] = \left(\text{constant term in $u$ of }\theta_3(u;t)\E_{\operatorname{shift}}\left[ \prod_{i=1}^n \tcF_{r_i}(\lambda^{(\tau_i)}) \right]\right).
 \]
 Substituting~\eqref{eq:shift-mixed_formal_moments} in for $\E_{\operatorname{shift}}\left[ \prod_{i=1}^n \tcF_{r_i}(\lambda^{(\tau_i)}) \right]$ and noting that the constant term in~$u$ of~$\theta_3(u \prod_{i=1}^n r_i; t)$ is~$1$ yields~\eqref{eq:unshifted_formal_moments}.

\end{proof}

We wish to use the observables $\tcF$ to study the height function of a random $q^{\mvol}$ cylindric partition along vertical slices. However, it will be more convenient to extend the integral formula~\eqref{eq:unshifted_formal_moments} to the case when $|r_i|<1$. In this range the power series defining $\tcF$ no longer converge, but we may work around this issue by defining a slightly different version. Let 
\begin{equation}\label{eq:def_cF}
 \cF_r(\lambda) := \sum_{i=1}^{\ell(\lambda)} r^{\lambda_i - i + 1} + \frac{r^{-\ell(\lambda)}}{1 - r^{-1}}.    
\end{equation}
 
Clearly $\cF_r(\lambda) = \tcF_r(\lambda)$ when $|r|>1$, but $\cF_r(\lambda)$ makes sense when $|r| < 1$.
The following lemma shows that these observables are essentially a certain exponential transform of the height function.
\begin{lemma} \label{thm:height_function_observable}
Let $\hh(\tau,x)$ be a height function and $\cF_r$ defined as above. Then for  $\tau\in[\![2N]\!]$ and $0 < r < 1$ one has
\[ \sum_{x \in \Z'} \hh(\tau,x) r^x = - \frac{r^{1/2}}{1 - r} \cF_r(\lambda^{(\tau)}). \]
\end{lemma}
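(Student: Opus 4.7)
The plan is to verify the identity by a direct computation starting from the definition of $\hh$, swapping the order of summation, and then matching the result to the explicit form of $\cF_r(\lambda^{(\tau)})$.

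Write $a(y) := \bbone[\text{there is a horizontal lozenge at }(\tau,y)]$, so that $a(y) = 1$ precisely when $y \in \{\xi_i\}_{i\ge 1}$, where $\xi_i := \lambda_i^{(\tau)} - i + 1/2$. Since the sequence $(\xi_i)_{i\ge 1}$ is strictly decreasing and satisfies $\xi_i = -i+1/2$ for $i>\ell(\lambda^{(\tau)})$, we have $a(y)=1$ for all $y\le -\ell(\lambda^{(\tau)})-1/2$, and above this threshold there are exactly $\ell(\lambda^{(\tau)})$ positions where $a=1$, namely $\xi_1,\ldots,\xi_{\ell(\lambda^{(\tau)})}$. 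By the definition of the height function (dropping $\tau$ from the notation and abbreviating $\ell=\ell(\lambda^{(\tau)})$),
\[
\sum_{x\in\Z'}\hh(\tau,x)\,r^x \;=\; \sum_{x\in\Z'}r^x \sum_{y\in\Z',\,y<x}(1-a(y)).
\]

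The first step of the proof is to interchange the order of summation. For $0<r<1$ the double sum converges absolutely because $(1-a(y))$ vanishes for all $y\le -\ell-1/2$ and the geometric series in $x$ converge at $+\infty$. Swapping gives
\[
\sum_{x\in\Z'}\hh(\tau,x)\,r^x
\;=\; \sum_{y\in\Z'}(1-a(y))\sum_{\substack{x\in\Z'\\x>y}} r^x
\;=\; \frac{r}{1-r}\sum_{y\in\Z'}(1-a(y))\,r^y.
\]

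The second step is to evaluate the remaining sum. Since $1-a(y)=0$ for $y\le -\ell-1/2$, and on $\{y\in\Z':y\ge -\ell+1/2\}$ the positions with $a(y)=1$ are exactly $\xi_1,\ldots,\xi_\ell$,
\[
\sum_{y\in\Z'}(1-a(y))r^y \;=\; \sum_{\substack{y\in\Z'\\y\ge -\ell+1/2}}r^y \;-\; \sum_{i=1}^{\ell}r^{\lambda_i^{(\tau)}-i+1/2}
\;=\; \frac{r^{-\ell+1/2}}{1-r} \;-\; r^{1/2}\sum_{i=1}^{\ell}r^{\lambda_i^{(\tau)}-i}.
\]

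The third and final step is algebraic: multiply by $r/(1-r)$ and compare with $-\tfrac{r^{1/2}}{1-r}\cF_r(\lambda^{(\tau)})$, using the identity $\tfrac{r^{-\ell}}{1-r^{-1}}=-\tfrac{r^{1-\ell}}{1-r}$ to rewrite the tail term in the definition \eqref{eq:def_cF} of $\cF_r$. Both sides reduce to $\tfrac{r^{-\ell+3/2}}{(1-r)^2}-\tfrac{r^{3/2}}{1-r}\sum_{i=1}^{\ell}r^{\lambda_i^{(\tau)}-i}$, proving the lemma. There is no real obstacle here; the only mild subtlety is keeping track of the half-integer shifts and justifying the interchange of summation, which is immediate from the observation that $1-a(y)$ has support bounded below.
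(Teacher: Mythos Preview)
Your proof is correct and follows essentially the same route as the paper's: the paper uses summation by parts where you swap the order of summation, but both immediately reduce to the same intermediate sum $\frac{r}{1-r}\sum_{y\in\Z'}(1-a(y))r^y$ and finish with the identical algebraic identification with $\cF_r$.
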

\begin{proof}
Summation by parts gives us
\[\sum_{x \in \Z'} \hh(\tau,x) r^x = 
\left(
\hh(\tau,x)\frac{r^x}{1-r}\Bigg|_{x=-\infty}
-\hh(\tau,x)\frac{r^{x+1}}{1-r}\Bigg|_{x=+\infty}
\right)
+\sum_{x \in \Z'}\frac{r^{x}}{1-r}\big(1-\bbone(\tau,x-1)\big),\]
where 
$\bbone(\tau,x)=
\bbone\big[
x=\lambda_i^{(\tau)} - i + \tfrac12 
\text{  for some } i 
\big].$ Note that for $x<0$ small enough $\hh(\tau,x)=0$ and for $r<1$  one has $\lim\limits_{x \to +\infty} h(\tau,x)r^{x+1}=0$, therefore 
\[
\hh(\tau,x)\frac{r^x}{1-r}\Bigg|_{x=-\infty}
-\hh(\tau,x)\frac{r^{x+1}}{1-r}\Bigg|_{x=\infty}
=0.
\] Let us compute the remaining term:
\begin{align*}
&\sum_{x \in \Z'}\frac{r^{x}}{1-r}\big(1-\bbone(\tau,x-1)\big)\\
&\quad\quad\quad\quad
= \frac{1}{1-r}\sum_{x=-\ell\left(\lambda^{(\tau)}\right)+\tfrac32}^\infty \big(1-\bbone(\tau,x-1)\big)r^{x}
= \frac{1}{1-r}
\left(
\sum_{x=-\ell\left(\lambda^{(\tau)}\right)+\tfrac32}^\infty r^x - \sum_{i=1}^{\ell\left(\lambda^{(\tau)}\right)} r^{\lambda_i^{(\tau)} - i + \tfrac32}
\right),
\end{align*}
where the sum starts from $-\ell(\lambda^{(\tau)}) - \tfrac{3}{2}$ because $1 - \bbone(\tau,x) = 0$ for $x \in \{\lambda_i^{(\tau)} - i + \tfrac{1}{2}\}_{i\ge 1}$ and we have $\lambda_i^{(\tau)} = 0$ for $i > \ell(\lambda^{(\tau)})$. Now,  note that 
\[\sum_{x=-\ell\left(\lambda^{(\tau)}\right)+\tfrac32}^\infty r^x 
- \sum_{i=1}^{\ell\left(\lambda^{(\tau)}\right)} r^{\lambda_i^{(\tau)} - i + \tfrac32} = -r^{\tfrac12}
\left(
\frac{r^{-\ell\left(\lambda^{(\tau)}\right)}}{1 - r^{-1}}
+\sum_{i=1}^{\ell\left(\lambda^{(\tau)}\right)} r^{\lambda^{(\tau)}_i - i + 1}
\right)\]
to conclude. \end{proof}

We conclude with a particularly nice specialization of \Cref{thm:observable_formula} to the case of the $q^{\mvol}$ model. The key observation is that
\begin{align} \label{eq:F_quotient}
\frac{F(\tau,z)}{F(\tau,q^{-2k}z)} = \prod_{\tau < i \le \tau + 2k} (1 - q^i z^{-1}) \prod_{\tau \le j < \tau + 2k} (1 - q^{-j} z), \quad \quad i \in 2\Z + \tfrac{1}{2}, \quad j \in 2\Z - \tfrac{1}{2}
\end{align}
is entire for any positive integer $k$, where we recall the expression~\eqref{eq:F_function} for $F$.

\begin{corollary}\label{thm:analytic_r_moments}
Suppose $\lambda^{(2N)} \succ \lambda^{(1)} \prec \cdots \succ \lambda^{(2N-1)} \prec \lambda^{(2N)} $ is distributed as the unshifted $q^{\mvol}$ model with $q \in (0,1)$. If $r_i = q^{2k_i}$ for some positive integer $k_i$ and $1 \le i \le n$, then
\begin{align} \label{eq:analytic_r_moments}
\begin{split}
& \E\left[ \prod_{i=1}^n \cF_{r_i}(\lambda^{(\tau_i)}) \right] = \left( \prod_{i=1}^n \frac{(t;t)_\infty^2}{(r_i t;t)_\infty (r_i^{-1}t;t)_\infty (1 - r_i^{-1})} \right) \\
& \quad \quad \times \frac{1}{(2\pi\bi)^n} \oint \cdots \oint \prod_{1 \le i < j \le n} \frac{\theta_1(z_i/z_j;t) \theta_1(r_jr_i^{-1}z_i/z_j;t)}{\theta_1(r_jz_i/z_j;t) \theta_1(r_i^{-1}z_i/z_j;t)} \prod_{i=1}^n \frac{F(\tau_i,z_i)}{F(\tau_i,r_i^{-1}z_i)} \frac{dz_i}{z_i}
\end{split}
\end{align}
where the contours are concentric positively oriented circles around $0$ such that
\[ r_j^{-1}|z_j| < |z_i| < |t|^{-1} r_i|z_j|, \quad \quad 1\le i < j \le n. \] 
\end{corollary}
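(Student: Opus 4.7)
My plan is to deduce \Cref{thm:analytic_r_moments} from \Cref{thm:observable_formula} by analytic continuation in the parameters $r_1, \dots, r_n$. The first observation is that for $|r|>1$, $\tcF_r(\lambda) = \cF_r(\lambda)$, since the geometric tail $\frac{r^{-\ell(\lambda)}}{1-r^{-1}} = \sum_{j \geq 0} r^{-\ell(\lambda)-j}$ agrees with $\sum_{i > \ell(\lambda)} r^{\lambda_i - i + 1}$ (because $\lambda_i = 0$ there). Hence \eqref{eq:unshifted_formal_moments} gives \eqref{eq:analytic_r_moments} with $\cF$ in place of $\tcF$ for $r_i$ in $(1, 1+\delta)$, and the task reduces to analytically continuing both sides to $r_i = q^{2k_i}$.

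For the left-hand side, I would show that $(r_1, \dots, r_n) \mapsto \E\bigl[\prod_i \cF_{r_i}(\lambda^{(\tau_i)})\bigr]$ is jointly analytic on a connected neighborhood of both the starting region and the target point, avoiding only the singular locus $r_i = 1$. For each fixed $\lambda$, $\cF_r(\lambda)$ is analytic in $r \in \C^\times \setminus \{1\}$, and passing through the expectation uses dominated convergence with the pointwise bound $|\cF_r(\lambda)| \leq C_r \cdot \ell(\lambda) \cdot |r|^{-\ell(\lambda)}$ (valid uniformly in $r$ on compacta bounded away from $0,1$), together with an exponential moment bound on $\ell(\lambda^{(\tau)})$ under the $q^{\mvol}$ measure. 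The latter is implied by the absolute convergence of the periodic Schur process partition function \eqref{eq:conv_abs}, since the marginal distribution of $\lambda^{(\tau)}$ admits a subgeometric tail in $\ell(\lambda^{(\tau)})$.

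For the right-hand side, the integrand is meromorphic in $(z_1, \dots, z_n)$ with poles from two sources: (i) the theta factors $\theta_1(r_j z_i/z_j; t)^{-1}$ and $\theta_1(r_i^{-1}z_i/z_j; t)^{-1}$, whose zero loci in $|z_i|/|z_j|$ are $\{r_j^{-1}t^m\}$ and $\{r_i t^m\}$ respectively, and (ii) the ratio $F(\tau_i, z_i)/F(\tau_i, r_i^{-1}z_i)$, whose poles depend on $r_i$. I would continuously deform each $r_i$ along a path in $\C^\times$ from $1+\epsilon$ to $q^{2k_i}$, avoiding $r_i = 1$, and simultaneously deform the $z_i$ contours to keep them inside the region prescribed by the analogue of \eqref{eq:contour_conditions} and away from all such poles. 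The decisive point is that at $r_i = q^{2k_i}$, equation \eqref{eq:F_quotient} identifies $F(\tau_i, z_i)/F(\tau_i, r_i^{-1}z_i)$ as a Laurent polynomial, so the obstructions of type (ii) vanish. Only the constraints (i) remain, and for $r_i, r_j \in (0,1)$ these reduce exactly to $r_j^{-1}|z_j| < |z_i| < t^{-1}r_i|z_j|$ (the max and min in \eqref{eq:contour_conditions} being achieved by their second and first arguments respectively). This is a non-empty open interval provided $r_i r_j < t^{-1}$, which is automatic since $r_i, r_j, t \in (0,1)$, so concentric circular contours suffice. The identity theorem then yields equality at $(q^{2k_1}, \dots, q^{2k_n})$.

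The main obstacle I foresee is the contour-deformation bookkeeping in the third paragraph: as each $r_i$ traverses its path, one must verify that the $z_i$ contours can be adjusted continuously without any pole of the integrand crossing them (equivalently, without picking up spurious residues). This is particularly delicate at intermediate values where the moving $F/F$ poles might pinch against theta-function poles; resolving this requires a careful choice of path, which is possible because of the one-parameter freedom in each $r_i$ and the fact that at the endpoint $r_i = q^{2k_i}$ the $F$-ratio becomes entire, so the final configuration is unobstructed.
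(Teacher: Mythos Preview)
Your strategy of analytic continuation in the $r_i$ is the same as the paper's in spirit, and your observations that $\tcF_r = \cF_r$ for $|r|>1$ and that the $F$-ratio becomes entire at $r_i = q^{2k_i}$ are both correct and central. However, there is a gap at the very first step: you cannot invoke \Cref{thm:observable_formula} directly for the $q^{\mvol}$ measure. That result (via \Cref{thm:shifted_observable_formula}) requires all specialization parameters to lie in $(0,1)$, whereas for the $q^{\mvol}$ measure one has $b_{2i} = q^{-(2i-1/2)} > 1$. Concretely, with these $b$-values the condition $|z_i| < \min_{j \le \tau_i} b_j^{-1}$ in \eqref{eq:contour_conditions} forces $|z_i|$ below roughly $q^{\tau_i - 1/2}$, while the lower bound forces it above $r_i q^{\tau_i + 1/2}$; for $r_i > 1$ this annulus has width only of order $q$, and the nested conditions for several $i$ cannot all be met. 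So there is no $\delta>0$ for which starting contours exist, and the formula you wish to continue from is unavailable.

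The paper's proof addresses exactly this by treating the $b_j$ as \emph{additional} parameters in the continuation. One begins with the $a_j$ already at their $q^{\mvol}$ values (these are in $(0,1)$) but keeps the $b_j$ small, so that \Cref{thm:observable_formula} applies and the upper bound $|z_i| < \min b_j^{-1}$ is harmless. One then continues $r_i \to q^{2k_i}$; at this endpoint the $a$-dependent part of $F(\tau_i,z_i)/F(\tau_i,r_i^{-1}z_i)$ telescopes to a Laurent polynomial, so the lower bound in \eqref{eq:F_quotient_condition} drops out entirely. Only then does one continue $b_j \to q^{-(2j-1/2)}$, and by \eqref{eq:F_quotient} the full $F$-ratio is entire at the endpoint, leaving only the theta constraints \eqref{eq:cross_term_condition}. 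This staged continuation is precisely what circumvents the contour-pinching obstacle you flag in your last paragraph: by decoupling the $r_i$-move from the $b_j$-move, one never faces both families of moving poles at once.
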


\begin{remark}
While the integral formula in \Cref{thm:observable_formula,thm:analytic_r_moments} look identical, there is one key difference. The main point of \Cref{thm:analytic_r_moments} is to choose $r_i$ with special values ($r_i = q^{2k_i}$) so that $\frac{F(\tau_i,z)}{F(\tau_i,r_i^{-1}z)}$ is entire, as in~\eqref{eq:F_quotient}. The entireness then leads to fewer contour conditions in \Cref{thm:analytic_r_moments} compared to \Cref{thm:observable_formula} which will be important for our analysis. We note also that the $|r_i|$ are $>1$ in \Cref{thm:observable_formula} and are always $<1$ in \Cref{thm:analytic_r_moments}.
\end{remark}

\begin{proof}[Proof of \Cref{thm:analytic_r_moments}]
We proceed by an analytic continuation argument. We first suppose $\lambda^{(2N)} \succ \lambda^{(1)} \prec \cdots \succ \lambda^{(2N-1)} \prec \lambda^{(2N)}$ is distributed as the unshifted periodic Schur process with parameters $a_1,b_2,a_3,\ldots,$ $a_{2N-1},b_{2N}$. Then
\[ F(\tau,z) = \frac{\prod_{i > \tau} (1 - a_iz^{-1})}{\prod_{i \le \tau} (1 - b_iz)}, \]
so \Cref{thm:observable_formula} yields
\begin{align} \label{eq:observable_formula_proof}
\begin{split}
& \E\left[ \prod_{i=1}^n \cF_{r_i}(\lambda^{(\tau_i)}) \right] = \left( \prod_{i=1}^n \frac{(t;t)_\infty^2}{(r_i t;t)_\infty (r_i^{-1}t;t)_\infty (1 - r_i^{-1})} \right) \\
& \quad \quad \times \frac{1}{(2\pi\bi)^n} \oint \cdots \oint \prod_{1 \le i < j \le n} \frac{\theta_1(z_i/z_j;t) \theta_1(r_jr_i^{-1}z_i/z_j;t)}{\theta_1(r_jz_i/z_j;t) \theta_1(r_i^{-1}z_i/z_j;t)} \prod_{i=1}^n \frac{F(\tau_i,z_i)}{F(\tau_i,r_i^{-1}z_i)} \frac{dz_i}{z_i}
\end{split}
\end{align}
whenever $1 < |r_i| < 1+\delta$ for some small $\delta > 0$ and $1 \leq i \leq n$.
Here, we may choose the contours to satisfy
\begin{align} \label{eq:cross_term_condition}
\max(r_j^{-1}|z_j|, r_i|z_j|) < |z_i| < |t|^{-1} \min(r_j^{-1}|z_j|, r_i|z_j|), \quad \quad 1 \le i < j \le n
\end{align}
and
\begin{align} \label{eq:F_quotient_condition}
\max_{j > \tau_i} (r_i a_j) < |z_i| < \min_{j \le \tau_i} b_j^{-1}, \quad \quad 1 \le i \le n.
\end{align}
Recall the~\eqref{eq:cross_term_condition} comes from analyticity conditions on the $\theta$ functions and~\eqref{eq:F_quotient_condition} comes from analyticity conditions on $\frac{F(\tau_i,z_i)}{F(\tau_i,r_i^{-1}z_i)}$. Observe that we can always ensure these conditions hold, by decreasing the $b_j$'s to be very small.

Since~\eqref{eq:observable_formula_proof} is analytic in the $a_i,b_j$ and $r_\ell$ wherever we may make sense of these expressions, we can analytically continue in these variables. Ultimately, we want $a_i,b_j$ to take the values corresponding to the $q^{\mvol}$ measure and have $r_\ell = q^{2k_\ell}$ so that $\frac{F(\tau_i,z_i)}{F(\tau_i,r_i^{-1}z_i)}$ is entire. Having that $\frac{F(\tau_i,z_i)}{F(\tau_i,r_i^{-1}z_i)}$ is entire then allows us to drop condition~\eqref{eq:F_quotient_condition}. However, to carry out this analytic continuation, we must do so carefully.

We may choose $a_{2j-1} = q^{2j - \frac{3}{2}}$ for $j = 1,\ldots,N$, as it is for the $q^{\mvol}$ measure, and vary $r_1,\ldots,r_n$ by analytic continuation so that $r_i = q^{2k_i}$ for each $1 \le i \le n$. Then
\[ \frac{F(\tau_i,z_i)}{F(\tau_i,r_i^{-1}z_i)} = \prod_{\tau_i < j \le \tau_i + 2k_i} (1 - q^j z^{-1}) \prod_{j \le \tau_i} \frac{1 - b_j r_i^{-1} z_i}{1 - b_j z_i} \]
where the first product is restricted over $j \in 2\Z + \tfrac{1}{2}$ and the second product is restricted over $j \in 2\Z$. In particular, we note that there is a massive cancellation of poles and zeros with this particular choice of $r_i$ and $a_j$. Thus the condition~\eqref{eq:F_quotient_condition} may be relaxed to
\[ |z_i| < \min_{j \le \tau_i} b_j^{-1}, \quad \quad 1 \le i \le n. \]
We may now vary $b_j$ by analytic continuation to take the value $q^{-(2j - \frac{1}{2})}$ as it is for the $q^{\mvol}$ measure, for each $j = 1,\ldots,N$. The end result is that~\eqref{eq:cross_term_condition} is the only condition on the contours because~\eqref{eq:F_quotient} tells us that $\frac{F(\tau_i,z_i)}{F(\tau_i,r_i^{-1}z_i)}$ is entire. Since $r_1,\ldots,r_n < 1$, we obtain the contour conditions in the statement of the corollary. This completes the proof. 
\end{proof}

\section{Asymptotics}\label{sec:asymptotics}

In this section we compute the mean, variance, and higher cumulants of the observables given in \Cref{thm:analytic_r_moments}. Recall that
$ \Theta(\eta\,|\,\omega) = \theta_1(e^{2\pi\bi \eta},e^{2\pi \bi \omega}). $
Throughout this section, we fix $0 < t < 1$ and set $q = t^{\frac{1}{2N}}$.

\begin{lemma} \label{thm:theta-1}
Fix $\delta,\omega > 0$ and suppose $K$ is a compact subset of $\C \setminus \{0\}$ which avoids $\omega\Z$. Then
\[ \frac{\Theta(\alpha\,|\,\omega)\Theta(\alpha + v_1 - v_2\,|\,\omega)}{\Theta(\alpha - v_2\,|\,\omega)\Theta(\alpha + v_1\,|\,\omega)} - 1 = - v_1v_2 \partial^2 \log \Theta(\alpha\,|\,\omega) + O(N^{-3\delta}) \]
uniformly for $|v_1|,|v_2| < N^{-\delta}$ and $\alpha \in K$, where $\partial^2 \log \Theta$ is the second derivative of $f(z) = \log \Theta(z\,|\,\omega)$.
\end{lemma}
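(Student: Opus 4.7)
The identity is a second-order Taylor expansion statement, so the plan is simply to expand both the numerator and denominator of the ratio around $v_1 = v_2 = 0$ and compare with the derivative of $\log \Theta$. The key observation is that the function
\[
F(v_1,v_2) := \Theta(\alpha\,|\,\omega)\Theta(\alpha+v_1-v_2\,|\,\omega) - \Theta(\alpha-v_2\,|\,\omega)\Theta(\alpha+v_1\,|\,\omega)
\]
vanishes to second order at the origin: direct differentiation shows $F(0,0) = 0$, $\partial_{v_1}F(0,0) = \partial_{v_2}F(0,0) = 0$, $\partial_{v_1}^2F(0,0) = \partial_{v_2}^2F(0,0) = 0$, while the only nonzero second-order coefficient is
\[
\partial_{v_1}\partial_{v_2}F(0,0) = \Theta'(\alpha)^2 - \Theta(\alpha)\Theta''(\alpha).
\]

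The first step in writing this up is to observe that since $\Theta(\cdot\,|\,\omega)$ is holomorphic in $\C$ with zeros contained in the discrete set $\omega\Z + \Z$, and $K$ is a compact set avoiding these zeros (as clarified in the hypothesis), we can pick a bounded open neighborhood $U$ of $K$ on which $\Theta$ is holomorphic and nonvanishing, and such that the closed disk of radius $2N^{-\delta}$ around each point of $K$ (for $N$ large enough) is contained in $U$. All derivatives of $\Theta$ are then uniformly bounded on this enlarged set, so Taylor's theorem with remainder applied in the two variables $v_1,v_2$ yields
\[
F(v_1,v_2) = v_1 v_2\bigl(\Theta'(\alpha)^2 - \Theta(\alpha)\Theta''(\alpha)\bigr) + R(v_1,v_2,\alpha),
\]
where $|R(v_1,v_2,\alpha)| \le C(|v_1|+|v_2|)^3$ uniformly in $\alpha \in K$ for a constant $C$ depending only on $K,\omega$. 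I would derive this by integrating the mixed partials along a path in $(v_1,v_2)$-space and using the Cauchy estimates for derivatives of $\Theta$ on $U$.

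Next, I would relate the coefficient $\Theta'(\alpha)^2 - \Theta(\alpha)\Theta''(\alpha)$ to $\partial^2 \log \Theta(\alpha\,|\,\omega)$ by the elementary identity
\[
\partial^2 \log \Theta(\alpha\,|\,\omega) = \frac{\Theta''(\alpha\,|\,\omega)}{\Theta(\alpha\,|\,\omega)} - \frac{\Theta'(\alpha\,|\,\omega)^2}{\Theta(\alpha\,|\,\omega)^2} = -\,\frac{\Theta'(\alpha)^2 - \Theta(\alpha)\Theta''(\alpha)}{\Theta(\alpha)^2}.
\]
Dividing $F(v_1,v_2)$ by the denominator $\Theta(\alpha-v_2\,|\,\omega)\Theta(\alpha+v_1\,|\,\omega) = \Theta(\alpha)^2 + O(|v_1|+|v_2|)$ (with uniform constants in $\alpha \in K$), and using $|v_1|,|v_2| < N^{-\delta}$, I would conclude
\[
\frac{F(v_1,v_2)}{\Theta(\alpha-v_2)\Theta(\alpha+v_1)} = -v_1v_2\,\partial^2\log\Theta(\alpha\,|\,\omega) + O(N^{-3\delta})
\]
uniformly in $\alpha \in K$, which is the claim. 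The main (and essentially only) subtlety is ensuring that all estimates are uniform in $\alpha \in K$; this is handled via the compactness of $K$ and Cauchy's bounds on a fixed enlarged neighborhood, so there is no serious obstacle.
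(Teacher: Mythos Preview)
Your proposal is correct and follows exactly the approach the paper indicates: the paper's own proof consists of the single sentence ``The proof is an elementary computation, starting from a Taylor expansion of each $\Theta$ term at $\alpha$,'' and your argument is precisely a careful execution of that Taylor expansion, with the uniformity in $\alpha \in K$ handled via compactness and Cauchy bounds as one would expect.
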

\begin{proof}
The proof is an elementary computation, starting from a Taylor expansion of each $\Theta$ term at $\alpha$.
\end{proof}

\begin{proposition} \label{thm:moment_asymptotics}
For any $0 < \tau \le 1$ and any integer $k > 0$, we have
\[ \lim_{N\to\infty} \E \left[ \frac{1}{2N} \sum_{y \in \tfrac{1}{2N} \Z'} \frac{\hh(\lfloor 2N \tau \rfloor, 2N y)}{2N} t^{2k y} \right] = \frac{1}{(2k \log t)^2} \binom{2k}{k}. \]
\end{proposition}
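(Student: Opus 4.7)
The plan would be to reduce the expectation to an explicit contour integral via \Cref{thm:height_function_observable} and \Cref{thm:analytic_r_moments}, and then extract the leading asymptotic. First I would apply \Cref{thm:height_function_observable} with $r = q^{2k}$, which together with the change of variable $x = 2Ny$ converts the sum into
\[
\sum_{y \in \frac{1}{2N}\Z'} \hh(\lfloor 2N\tau \rfloor, 2Ny)\, t^{2ky}
= -\frac{q^k}{1 - q^{2k}}\,\cF_{q^{2k}}\!\bigl(\lambda^{(\lfloor 2N\tau \rfloor)}\bigr).
\]
Since $q = t^{1/(2N)}$, we have $1 - q^{2k} = 1 - t^{k/N} \sim -\tfrac{k\log t}{N}$, so after dividing by $(2N)^2$ the overall prefactor behaves like $\tfrac{1}{-4Nk\log t}$. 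Applying \Cref{thm:analytic_r_moments} with $n=1$ and $r = q^{2k}$ then yields
\[
\E\bigl[\cF_{q^{2k}}(\lambda^{(\tau)})\bigr]
= \frac{(t;t)_\infty^2}{(q^{2k}t;t)_\infty (q^{-2k}t;t)_\infty (1-q^{-2k})}
\cdot \frac{1}{2\pi\bi}\oint \frac{F(\tau,z)}{F(\tau, q^{-2k}z)}\,\frac{dz}{z},
\]
and since $(q^{\pm 2k}t;t)_\infty \to (t;t)_\infty$ while $1 - q^{-2k} \sim \tfrac{k\log t}{N}$, this prefactor tends to $\tfrac{N}{k\log t}$. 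It therefore remains to show that the contour integral converges to $\binom{2k}{k}$.

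For this, I would use the factorization \eqref{eq:F_quotient}:
\[
\frac{F(\tau,z)}{F(\tau, q^{-2k} z)}
= \prod_{\substack{\tau < i \le \tau + 2k \\ i \in 2\Z + 1/2}} (1 - q^i z^{-1})
\prod_{\substack{\tau \le j < \tau + 2k \\ j \in 2\Z - 1/2}} (1 - q^{-j} z),
\]
which contains exactly $k$ factors in each product. Writing $\tau_0$ for the real parameter and $\tau = \lfloor 2N\tau_0 \rfloor$, one has $q^\tau \to t^{\tau_0}$, and $q^{i-\tau} \to 1$ for every bounded $i - \tau$, hence every $q^i \to t^{\tau_0}$ and $q^{-j} \to t^{-\tau_0}$. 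On any fixed circle $\{|z| = \rho\}$ with $\rho > 0$, the integrand is a Laurent polynomial of fixed degree in $z^{\pm 1}$ and each factor converges uniformly, so one may pass the limit inside the integral to obtain
\[
\frac{1}{2\pi\bi}\oint \bigl(1 - t^{\tau_0} z^{-1}\bigr)^k\bigl(1 - t^{-\tau_0} z\bigr)^k\,\frac{dz}{z}.
\]
Substituting $w = z/t^{\tau_0}$ and using the identity $(1-w^{-1})^k(1-w)^k = (-1)^k w^{-k}(1-w)^{2k}$, this becomes
\[
\frac{(-1)^k}{2\pi\bi}\oint \frac{(1-w)^{2k}}{w^{k+1}}\,dw = (-1)^k \cdot (-1)^k \binom{2k}{k} = \binom{2k}{k},
\]
where the penultimate equality extracts the coefficient of $w^k$ in $(1-w)^{2k}$.

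Multiplying the three asymptotic pieces $\tfrac{1}{-4Nk\log t}$, $\tfrac{N}{k\log t}$, and $\binom{2k}{k}$ yields exactly $\tfrac{1}{(2k\log t)^2}\binom{2k}{k}$, as claimed. There is no real analytic obstacle here: because the prelimit integrand is a Laurent polynomial of bounded degree in $z^{\pm 1}$, uniform convergence on a fixed circle is automatic, and the remainder of the argument is careful bookkeeping of signs and factors of $N$ and $\log t$. The only subtlety worth flagging is the sign identity $(1-w^{-1})^k(1-w)^k = (-1)^k w^{-k}(1-w)^{2k}$, which is what makes the two factors of $(-1)^k$ combine to produce the positive answer $\binom{2k}{k}$.
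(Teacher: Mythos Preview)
Your approach is exactly the paper's: apply \Cref{thm:height_function_observable} and \Cref{thm:analytic_r_moments} with $r = q^{2k} = t^{k/N}$, use~\eqref{eq:F_quotient} to identify the limiting integrand, and evaluate the resulting contour integral as $\binom{2k}{k}$. There is one sign slip in your bookkeeping: the first prefactor $-\tfrac{q^k}{(2N)^2(1-q^{2k})}$ tends to $\tfrac{1}{4Nk\log t}$, not $\tfrac{1}{-4Nk\log t}$; with this correction your three pieces $\tfrac{1}{4Nk\log t}$, $\tfrac{N}{k\log t}$, and $\binom{2k}{k}$ do multiply to the positive quantity $\tfrac{1}{(2k\log t)^2}\binom{2k}{k}$ (as stated, their product is negative).
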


\begin{proof}
Set $\tau^\sharp := \lfloor 2N \tau \rfloor$ and $r = t^{k/N}$. By \Cref{thm:height_function_observable} and \Cref{thm:analytic_r_moments}, we have
\begin{align*}
\E \left[ \frac{1}{(2N)^2} \sum_{y^\sharp \in \Z'} \hh(\tau^\sharp,y^\sharp) t^{ky^\sharp/N} \right] &= - \frac{1}{(2N)^2} \frac{r^{1/2}}{1 - r} \E \left[ \cF_r(\lambda^{(\tau^\sharp)}) \right] \\
&= \frac{1}{(2N)^2} \frac{r^{3/2}}{(1 - r)^2} \frac{(t;t)_\infty^2}{(rt;t)_\infty(r^{-1}t;t)_\infty} \frac{1}{2\pi\bi} \oint \frac{F(\tau^\sharp,z)}{F(\tau^\sharp,r^{-1}z)} \frac{dz}{z}
\end{align*}
where the $z$ contour is some simple contour positively oriented around $0$. Note that $1 - r \sim k (\log t)/N$ and
\[ \lim_{N\to\infty} \frac{F(\tau^\sharp,z)}{F(\tau^\sharp,r^{-1}z)} = (1 - t^\tau z)^k (1 - t^{-\tau} z)^k = \frac{(1 - t^{-\tau} z)^{2k}}{(-t^{-\tau}z)^k}, \]
where the first equality follows by~\eqref{eq:F_quotient} since $t=q^{2N}$. Hence our expectation converges to 
\[ \frac{1}{(2k \log t)^2 2\pi\bi} \oint \frac{(1 - t^{-\tau} z)^{2k}}{(-t^{-\tau}z)^k} \frac{dz}{z}. \]
Observing that the contour integral evaluates to the central binomial coefficient $\binom{2k}{k}$ gives us the desired.
\end{proof}

\begin{proposition} \label{thm:covariance_asymptotics}
For any $0 < \tau_1 \le \tau_2 \le 1$ and any integers $k_1,k_2 > 0$, we have
\begin{align*}
& \lim_{N\to\infty} \cov \left( \frac{1}{2N} \sum_{y\in\tfrac{1}{2N}\Z'} \bar{h}_N(\lfloor 2N\tau_1 \rfloor,2Ny) t^{2k_1 y}, \frac{1}{2N} \sum_{y \in \tfrac{1}{2N}\Z'} \bar{h}_N(\lfloor 2N \tau_2 \rfloor,2N y)t^{2k_2y}\right) \\
& \quad \quad = - \frac{1}{4k_1k_2(2\pi\bi \log t)^2} \int_{-\frac{1}{2} + \bi c_1}^{\frac{1}{2} + \bi c_1} \int_{-\frac{1}{2} + \bi c_2}^{\frac{1}{2} + \bi c_2} \partial^2 \log \Theta(\eta_1 - \eta_2\,|\,\omega) \prod_{i=1}^2  (1 - t^{\tau_i} e^{-2\pi\bi \eta_i})^{k_i} (1 - t^{-\tau_i} e^{2\pi\bi \eta_i})^{k_i} d\eta_i,
\end{align*}
where $t = e^{2\pi\bi \omega}$, $c_1,c_2$ are any real numbers such that $c_2 - \tfrac{|\log t|}{2\pi} < c_1 < c_2$, and $\partial^2 \log \Theta$ is the second derivative of $f(z) = \log \Theta(z\,|\,\omega)$.
\end{proposition}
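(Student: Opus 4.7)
The plan is to apply \Cref{thm:analytic_r_moments} with $n=2$ together with the $n=1$ case to write the covariance as a double contour integral whose integrand is a certain $\theta_1$-ratio minus $1$, and then apply \Cref{thm:theta-1} to extract the leading-order asymptotic.

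\emph{Reduction to $\cF$-observables.} Set $r_i = q^{2k_i} = t^{k_i/N}$ and $\tau_i^\sharp = \lfloor 2N\tau_i\rfloor$. By \Cref{thm:height_function_observable}, after the substitution $y^\sharp = 2Ny$, each factor inside the covariance equals
\begin{align*}
-\frac{1}{2N}\,\frac{r_i^{1/2}}{1-r_i}\,\bigl(\cF_{r_i}(\lambda^{(\tau_i^\sharp)}) - \E\cF_{r_i}(\lambda^{(\tau_i^\sharp)})\bigr).
\end{align*}
Since $1-r_i \sim -k_i(\log t)/N$, the overall prefactor in the covariance tends to $\frac{1}{4k_1k_2(\log t)^2}$, reducing the problem to computing $\lim_{N\to\infty}\cov(\cF_{r_1}(\lambda^{(\tau_1^\sharp)}),\cF_{r_2}(\lambda^{(\tau_2^\sharp)}))$.

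\emph{Cumulant as a double contour integral.} Using a common set of contours satisfying the $n=2$ admissibility condition, subtract the product of two $n=1$ instances of \Cref{thm:analytic_r_moments} from the $n=2$ instance. Since the prefactors factorize and (because $r_i = q^{2k_i}$) each single-variable integrand is entire so the $n=1$ contours can be freely deformed to match, the difference is
\begin{align*}
\cov(\cF_{r_1},\cF_{r_2}) = \prod_{i=1}^2\frac{(t;t)_\infty^2}{(r_it;t)_\infty(r_i^{-1}t;t)_\infty(1-r_i^{-1})}\cdot\frac{1}{(2\pi\bi)^2}\oint\!\oint \Delta(z_1,z_2)\prod_{i=1}^2\frac{F(\tau_i^\sharp,z_i)}{F(\tau_i^\sharp,r_i^{-1}z_i)}\frac{dz_i}{z_i},
\end{align*}
where
\begin{align*}
\Delta(z_1,z_2) := \frac{\theta_1(z_1/z_2;t)\,\theta_1(r_2 r_1^{-1}z_1/z_2;t)}{\theta_1(r_2 z_1/z_2;t)\,\theta_1(r_1^{-1}z_1/z_2;t)} - 1.
\end{align*}

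\emph{Asymptotic expansion and assembly.} Parametrize $z_i = e^{2\pi\bi\eta_i}$ with $\eta_i$ on the horizontal segment from $-\tfrac12+\bi c_i$ to $\tfrac12+\bi c_i$; the hypothesis $c_2-\tfrac{|\log t|}{2\pi}<c_1<c_2$ is exactly the contour condition of \Cref{thm:analytic_r_moments} in the limit $r_i\to 1$. Writing $\omega=(\log t)/(2\pi\bi)$ and $r_i=e^{2\pi\bi\omega k_i/N}$, \Cref{thm:theta-1} applied with $\alpha=\eta_1-\eta_2$, $v_1=k_2\omega/N$, $v_2=k_1\omega/N$ gives
\begin{align*}
\Delta(e^{2\pi\bi\eta_1},e^{2\pi\bi\eta_2}) = -\frac{k_1k_2\omega^2}{N^2}\,\partial^2\log\Theta(\eta_1-\eta_2\,|\,\omega) + O(N^{-3}),
\end{align*}
uniformly in $\eta_1,\eta_2$ on the chosen segments, since the condition on $c_1,c_2$ keeps $\alpha$ away from $\omega\Z$. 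As in the proof of \Cref{thm:moment_asymptotics}, the quotient $F(\tau_i^\sharp,z_i)/F(\tau_i^\sharp,r_i^{-1}z_i)$ converges uniformly to $(1-t^{\tau_i}z_i^{-1})^{k_i}(1-t^{-\tau_i}z_i)^{k_i}$, the Pochhammer factors tend to $1$, and $\prod_i(1-r_i^{-1})^{-1}\sim N^2/(k_1k_2(\log t)^2)$. The factors of $N^{\pm 2}$ cancel exactly; combining with the Step~1 prefactor $\frac{1}{4k_1k_2(\log t)^2}$, substituting $dz_i/z_i=2\pi\bi\,d\eta_i$, and using $(2\pi\bi\log t)^2=-4\pi^2(\log t)^2$ yields the claimed formula.

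The main technical obstacle is justifying the interchange of the limit $N\to\infty$ with the double contour integral. This follows from dominated convergence once one verifies that the pointwise error in the $\Delta$-expansion from \Cref{thm:theta-1} and the convergence of the $F$-quotients are uniform on the compact contours; this holds because the contour conditions keep all poles of the integrand bounded away from the contours, and the $F$-quotient is a product of finitely many factors each converging uniformly on compacts.
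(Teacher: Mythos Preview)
Your proof is correct and follows essentially the same route as the paper: convert to $\cF$-observables via \Cref{thm:height_function_observable}, subtract the $n=1$ from the $n=2$ case of \Cref{thm:analytic_r_moments} to isolate the $\theta_1$-ratio minus $1$, change variables to $\eta_i$, and apply \Cref{thm:theta-1} to extract the leading $N^{-2}$ term. The paper also fixes the $z$-contours with $|z_2|<|z_1|<t^{-1}|z_2|$ independently of $N$ (so the conditions of \Cref{thm:analytic_r_moments} hold for large $N$) before passing to the limit, exactly as you do, and the bookkeeping of prefactors matches.
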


\begin{proof}
Set $\tau_i^\sharp := \lfloor 2N \tau_i \rfloor$ and $r_i = t^{k_i/N}$ for $i = 1,2$. We are interested in computing the asymptotics of
\[ \E\left[ \prod_{i=1}^2 \frac{1}{2N} \sum_{y^\sharp\in\Z'} \hh(\tau_i^\sharp,y^\sharp) t^{2k_i\frac{y^\sharp}{2N}} \right] - \prod_{i=1}^2 \E\left[ \frac{1}{2N} \sum_{y^\sharp\in\Z'} \hh(\tau_i^\sharp,y^\sharp) t^{2k_i\frac{y^\sharp}{2N}} \right] \]
as $N\to\infty$. By \Cref{thm:height_function_observable}, this is equal to
\[ \frac{1}{4N^2} \frac{r_1^{1/2} r_2^{1/2}}{(1 - r_1)(1 - r_2)} \left( \E\left[ \cF_{r_1}(\lambda^{(\tau_1^\sharp)}) \cF_{r_2}(\lambda^{(\tau_2^\sharp)}) \right] - \E\left[ \cF_{r_1}(\lambda^{(\tau_1^\sharp)}) \right] \E\left[ \cF_{r_2}(\lambda^{(\tau_2^\sharp)}) \right] \right). \]
Recalling that $t=q^{2N}$, \Cref{thm:analytic_r_moments} yields
\begin{align*}
& \frac{1}{4N^2} \frac{r_1^{3/2} r_2^{3/2}}{(1 - r_1)^2(1 - r_2)^2} \left( \prod_{i=1}^2 \frac{(t;t)_\infty^2}{(r_it;t)_\infty (r_i^{-1}t;t)_\infty} \right) \\
& \quad \quad \times \frac{1}{(2\pi\bi)^2} \oint \oint \left( \frac{\theta_1(z_1/z_2;t) \theta_1(r_2r_1^{-1}z_1/z_2;t)}{\theta_1(r_2z_1/z_2;t)\theta_1(r_1^{-1}z_1/z_2;t)} - 1 \right) \prod_{i=1}^2 \frac{F(\tau_i^\sharp,z_i)}{F(\tau_i^\sharp,r_i^{-1}z_i)} \frac{dz_i}{z_i}
\end{align*}
where the contours are positively oriented concentric circles centered at $0$ such that
\[ r_2^{-1}|z_2| < |z_1| < t^{-1} r_1|z_2|. \]
We choose the circles so that $|z_2| < |z_1| < t^{-1} |z_2|$ independent of $N$. Then the contour conditions above are satisfied for $N$ large enough since $r_i \to 1$ for $i = 1,2$. It is convenient to change coordinates $e^{2\pi\bi\eta_i} = z_i$ where we choose the branch so that $\Re \eta_i$ varies from $-1/2$ to $1/2$. Then we have
\begin{align*}
& \frac{1}{4N^2} \frac{r_1^{3/2} r_2^{3/2}}{(1 - r_1)^2(1 - r_2)^2} \left( \prod_{i=1}^2 \frac{(t;t)_\infty^2}{(r_it;t)_\infty (r_i^{-1}t;t)_\infty} \right) \\
& \times \int_{-\frac{1}{2} + \bi c_1}^{\frac{1}{2} + \bi c_1} \int_{-\frac{1}{2} + \bi c_2}^{\frac{1}{2} + \bi c_2} \left( \frac{\Theta(\eta_1 - \eta_2\,|\,\omega) \Theta(\tfrac{k_2-k_1}{2\pi\bi N} \log t + \eta_1 - \eta_2\,|\,\omega)}{\Theta(\tfrac{k_2}{2\pi\bi N} \log t + \eta_1 - \eta_2\,|\,\omega) \Theta(-\tfrac{k_1}{2\pi\bi N} \log t + \eta_1 - \eta_2\,|\,\omega)} - 1 \right) \prod_{i=1}^2 \frac{F(\tau_i^\sharp,e^{2\pi\bi \eta_i})}{F(\tau_i^\sharp,r_i^{-1}e^{2\pi\bi \eta_i})} d\eta_i
\end{align*}
where $c_1$ and $c_2$ are any real numbers such that $c_2 - \tfrac{|\log t|}{2\pi} < c_1 < c_2$.

To estimate this integral, note that
\[
1 - r_i \sim -k_i(\log t)/N \cdot (1 + o(1)),
\]
and
\[ \lim_{N\to\infty} \frac{F(\tau_i^\sharp,e^{2\pi\bi \eta_i})}{F(\tau_i^\sharp,r_i^{-1}e^{2\pi\bi \eta_i})} = (1 - t^{\tau_i} e^{-2\pi\bi \eta_i})^{k_i} (1 - t^{-\tau_i} e^{2\pi\bi \eta_i})^{k_i}. \]
Additionally,
\[ \frac{\Theta(\eta_1 - \eta_2\,|\,\omega) \Theta(\tfrac{k_2-k_1}{2\pi\bi N} \log t + \eta_1 - \eta_2\,|\,\omega)}{\Theta(\tfrac{k\,|\,2}{2\pi\bi N} \log t + \eta_1 - \eta_2\,|\,\omega) \Theta(-\tfrac{k_1}{2\pi\bi N} \log t + \eta_1 - \eta_2\,|\,\omega)} - 1 = -\frac{k_1k_2(\log t)^2}{(2\pi \bi N)^2} \partial^2 \log \Theta(\eta_1 - \eta_2\,|\,\omega) + o(N^{-2}) \]
by applying \Cref{thm:theta-1} with the compact set $K$ containing both contours, $\alpha = \eta_1-\eta_2$, a$v_i = -\tfrac{k_i}{2 \pi \bi N} \log t$.

Hence our double integral converges to
\[ - \frac{1}{4k_1k_2(2\pi\bi \log t)^2} \int_{-\frac{1}{2} + \bi c_1}^{\frac{1}{2} + \bi c_1} \int_{-\frac{1}{2} + \bi c_2}^{\frac{1}{2} + \bi c_2} \partial^2 \log \Theta(\eta_1 - \eta_2\,|\,\omega) \prod_{i=1}^2  (1 - t^{\tau_i} e^{-2\pi\bi \eta_i})^{k_i} (1 - t^{-\tau_i} e^{2\pi\bi \eta_i})^{k_i} d\eta_i \]
as desired.
\end{proof}

\begin{proposition} \label{thm:cumulant_asymptotics}
If $m \ge 3$, then for any $0 < \tau_1 \le \ldots \le \tau_m \le 1$ and any integers $k_1,\ldots,k_m > 0$, we have
\[ \lim_{N\to\infty} \kappa_m\left( \frac{1}{2N} \sum_{y\in\tfrac{1}{2N}\Z'} \bar{h}_N(\lfloor 2N\tau_1 \rfloor,2Ny) t^{2k_1y}, \ldots, \frac{1}{2N} \sum_{y \in \tfrac{1}{2N}\Z'} \bar{h}_N(\lfloor 2N \tau_m \rfloor,2N y)t^{2k_my}\right) = 0, \]
where $\kappa_m$ is the $m\tth$ joint cumulant.
\end{proposition}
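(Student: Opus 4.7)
\emph{Setup.} Write $X_i := \tfrac{1}{2N}\sum_{y\in\frac{1}{2N}\Z'}\bar h_N(\lfloor 2N\tau_i\rfloor, 2Ny)t^{2k_iy}$. Since cumulants of order $\ge 2$ are unchanged by centering, I may replace $\bar h_N$ by $\hh$. Combining Lemma~\ref{thm:height_function_observable} (with $r_i = t^{k_i/N} = q^{2k_i}$) and Corollary~\ref{thm:analytic_r_moments} gives
\[
\E\!\left[\prod_{i=1}^m X_i\right] = \prod_{i=1}^m A_i(N) \cdot \frac{1}{(2\pi\bi)^m}\oint\!\cdots\!\oint \prod_{1\le i<j\le m}R_{ij}(z_i,z_j)\prod_{i=1}^m G_i(z_i)\prod_{i=1}^m\frac{dz_i}{z_i},
\]
where $A_i(N) = \frac{r_i^{3/2}}{2N(1-r_i)^2}\cdot\frac{(t;t)_\infty^2}{(r_it;t)_\infty(r_i^{-1}t;t)_\infty}= \Theta(N)$ as in the proof of Proposition~\ref{thm:covariance_asymptotics}, $R_{ij}$ is the four-theta cross-term from~(\ref{eq:analytic_r_moments}), and $G_i(z_i) = F(\tau_i^\sharp,z_i)/F(\tau_i^\sharp, r_i^{-1}z_i)$ is entire in $z_i\ne 0$ by~(\ref{eq:F_quotient}). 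The same identity holds for every sub-expectation $\E[\prod_{i\in B}X_i]$, $B\subseteq[m]$.

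\emph{Graph expansion and identification of the cumulant.} I would then expand
\[
\prod_{1\le i<j\le m}R_{ij}(z_i,z_j) \;=\; \sum_{E\subseteq\binom{[m]}{2}}\prod_{\{i,j\}\in E}\bigl(R_{ij}(z_i,z_j)-1\bigr),
\]
turning the moment into a sum over simple graphs $E$ on $[m]$. The key structural observation is that, for each fixed $E$, the corresponding integrand has poles only from the surviving factors $R_{ij}$ with $\{i,j\}\in E$ (since $G_i$ is entire), so the pairwise contour constraints of Corollary~\ref{thm:analytic_r_moments} are only binding along edges of $E$. Hence the multi-contour integral factors as a product over connected components of $E$. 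Grouping $E$'s by the partition of $[m]$ given by connected components produces
\[
\E\!\left[\prod_{i\in B}X_i\right] \;=\; \sum_{\pi\in P(B)}\prod_{C\in\pi}\kappa(C), \qquad \kappa(C) := \prod_{i\in C}A_i(N)\cdot\!\!\sum_{\substack{F\subseteq\binom{C}{2}\\F\text{ connected on }C}}\!\!I_F^{(C)},
\]
where $I_F^{(C)}$ denotes the $|C|$-fold contour integral of $\prod_{\{i,j\}\in F}(R_{ij}-1)\prod_{i\in C}G_i(z_i)\prod_{i\in C}dz_i/z_i$. Comparing with the moment-cumulant formula and applying M\"obius inversion on the partition lattice identifies $\kappa(C)$ with $\kappa_{|C|}(\{X_i\}_{i\in C})$; in particular $\kappa_m(X_1,\ldots,X_m) = \prod_i A_i(N) \cdot \sum_{F\text{ connected spanning on }[m]}I_F^{([m])}$.

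\emph{Estimation.} Lemma~\ref{thm:theta-1}, applied via $z_i = e^{2\pi\bi\eta_i}$ with $\alpha=\eta_i-\eta_j$, $v_1=-k_i\log t/(2\pi\bi N)$, $v_2=-k_j\log t/(2\pi\bi N)$, yields $R_{ij}(z_i,z_j)-1 = O(N^{-2})$ uniformly on the (fixed in $N$) contours. The $G_i$ converge uniformly to the bounded functions $(1-t^{\tau_i}z_i^{-1})^{k_i}(1-t^{-\tau_i}z_i)^{k_i}$, and the contours have bounded length. Since any connected spanning graph on $[m]$ has at least $m-1$ edges and there are only finitely many such graphs, one obtains $\sum_F I_F^{([m])} = O(N^{-2(m-1)})$. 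Combined with $\prod_i A_i(N) = \Theta(N^m)$ this gives $\kappa_m = O(N^{2-m})$, which tends to zero for $m\ge 3$.

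\emph{Main obstacle.} The delicate point is the contour-factorization claim in the middle step: one must argue rigorously that after the expansion removes a factor $R_{ij}$ for $\{i,j\}\notin E$, the original nested contour prescription from Corollary~\ref{thm:analytic_r_moments} can be deformed so that the $z_i$'s in different connected components of $E$ are integrated over independent contours, without crossing any poles. This requires a careful bookkeeping of the pole structure of the individual products $\prod_{\{i,j\}\in E}(R_{ij}-1)$ and of the entireness of $G_i$, so that the same integral can be exhibited as a genuine product of lower-dimensional integrals, one per connected component.
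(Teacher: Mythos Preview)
Your proof is correct and uses exactly the same engine as the paper: express moments via Corollary~\ref{thm:analytic_r_moments}, expand $\prod_{i<j}R_{ij}$ over simple graphs, use Lemma~\ref{thm:theta-1} to get $R_{ij}-1=O(N^{-2})$, and conclude from $|E(\Omega)|\ge m-1$ for connected $\Omega$ that $\kappa_m=O(N^{2-m})$.

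The one difference is the direction in which you obtain the connected-graph formula for $\kappa_m$. You start from the single $m$-fold integral for the full moment, expand over graphs, and then want to \emph{factor} the integral over connected components in order to recognize the moment--cumulant relation; this is the step you flag as the main obstacle. The paper goes the other way: it starts from the moment--cumulant formula
\[
\kappa_m=\sum_{\pi}(-1)^{|\pi|-1}(|\pi|-1)!\prod_{B\in\pi}\E\Big[\prod_{i\in B}X_i\Big],
\]
applies Corollary~\ref{thm:analytic_r_moments} to each block $B$ separately (giving a $|B|$-fold integral), and then \emph{combines} these into a single $m$-fold integral by Fubini, using one common nested family of circular contours for all $m$ variables. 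The resulting integrand $\cC(z_1,\dots,z_m)$ is then identified with the connected-graph sum via the identity $\prod_{i<j\in S}R_{ij}=\sum_{F\subset\binom{S}{2}}\prod_{e\in F}(R_e-1)$ and M\"obius inversion on the partition lattice (the paper's $\cE$/$\cK$ argument).

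The upshot is that the paper's order of operations sidesteps your obstacle entirely: combining independent block integrals into one is free, whereas separating one integral into blocks requires the contour-deformation argument. That argument is in fact routine here---once $R_{ij}$ is removed for $\{i,j\}\notin E$, the integrand factors over components and $G_i$ is entire, so contours in different components can be moved independently without crossing poles---but reversing the logic as the paper does makes the issue disappear.
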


\begin{proof}
Set $\tau_i^\sharp := \lfloor 2N \tau_i \rfloor$ and $r_i := t^{k_i/N}$ for $1 \le i \le m$. Recall that the order $\ge 2$ cumulants are invariant under additive shifts of the random variables by a constant. Then the cumulant we want to consider is
\begin{align*}
\kappa_m\left( \frac{1}{2N} \sum_{y^\sharp\in\Z'} \hh(\tau_1^\sharp,y^\sharp) t^{k_1y^\sharp/N}, \ldots, \frac{1}{2N} \sum_{y^\sharp\in\Z'} \hh(\tau_m^\sharp,y^\sharp)t^{k_my^\sharp/N}\right),
\end{align*}
where we rewrote the sum to be over $\Z'$. By \Cref{thm:height_function_observable}, we may express this as
\[ \left( \frac{1}{(2N)^m} \prod_{i=1}^m \frac{r_i^{1/2}}{r_i - 1} \right) \kappa_m\left( \cF_{r_1}(\lambda^{(\tau_1^\sharp)}), \ldots, \cF_{r_m}(\lambda^{(\tau_m^\sharp)}) \right). \]
Since $r_i - 1 = k_i/N \log t + O(1/N^2)$, we want to show that
\[ \lim_{N\to\infty} \kappa_m\left( \cF_{r_1}(\lambda^{(\tau_1^\sharp)}), \ldots, \cF_{r_m}(\lambda^{(\tau_m^\sharp)}) \right) = 0. \]

We can express the joint cumulants in terms of the joint moments (see e.g.~\cite[C3.2]{PT11}) to obtain
\begin{align*}
& \kappa_m\left( \cF_{r_1}(\lambda^{(\tau_1^\sharp)}), \ldots, \cF_{r_m}(\lambda^{(\tau_m^\sharp)}) \right) \\
& \quad \quad = \sum_{\substack{d > 0 \\ \{S_1,\ldots,S_d\} \in \cP_{[[1,m]]}}} (-1)^{d-1} (d-1)! \prod_{\ell=1}^d \E\left[ \prod_{i \in S_\ell} \cF_{r_i}(\lambda^{(\tau_i^\sharp)}) \right],
\end{align*}
where $\cP_{[[1,m]]}$ is the collection of set partitions of $[[1,m]]$. We may replace the expectations by \Cref{thm:analytic_r_moments} to obtain
\[ \left( \prod_{i=1}^m \frac{(t;t)_\infty^2}{(r_i t;t)_\infty (r_i^{-1}t;t)_\infty (1 - r_i^{-1})} \right) \frac{1}{(2\pi\bi)^m} \oint \cdots \oint \cC(z_1,\ldots,z_m) \prod_{i=1}^n \frac{F(\tau_i^\sharp,z_i)}{F(\tau_i^\sharp,r_i^{-1}z_i)} \frac{dz_i}{z_i} \]
where
\begin{equation}\label{eq:C_def}
    \cC(z_1,\ldots,z_m) := \sum_{\substack{d > 0 \\ \{S_1,\ldots,S_d\} \in \cP_m}} (-1)^{d-1} (d-1)! \prod_{\ell=1}^d \prod_{\substack{i < j \\ i,j \in S_\ell}} \frac{\theta_1(z_i/z_j;t) \theta_1(r_jr_i^{-1}z_i/z_j;t)}{\theta_1(r_jz_i/z_j;t) \theta_1(r_i^{-1}z_i/z_j;t)}
\end{equation} 
and the contours are positively oriented concentric circles centered at $0$ such that
\[ r_j^{-1}|z_j| < |z_i| < t^{-1} r_i|z_j|, \quad \quad 1 \le i < j \le m. \]
Choose circles such that
\[ |z_j| < |z_i| < t^{-1} |z_j|, \quad \quad 1 \le i < j \le m \]
independent of $N$. Then the contour conditions above are satisfied for $N$ large enough since $r_i \to 1$ for $1 \le i \le m$.

Let $S \subset [[1,m]]$, let $\cT(S)$ denote the set of undirected simple graphs with vertex set $S$, and let $\cL(S) \subset \cT(S)$ denote the subset of connected graphs. Given a graph $\Omega$, denote by $E(\Omega)$ its edge set. We claim that
\begin{align} \label{eq:mobius}
\cC(z_1,\ldots,z_m) = \sum_{\Omega \in \cL([[1,m]])} \prod_{\substack{(i,j) \in E(\Omega) \\ i < j}} \left( \frac{\theta_1(z_i/z_j;t) \theta_1(r_jr_i^{-1}z_i/z_j;t)}{\theta_1(r_jz_i/z_j;t) \theta_1(r_i^{-1}z_i/z_j;t)} - 1 \right).
\end{align} 
Define
\begin{align*}
\cK(S) & := \sum_{\Omega \in \cL(S)} \prod_{\substack{(i,j) \in E(\Omega) \\ i < j}} \left( \frac{\theta_1(z_i/z_j;t) \theta_1(r_jr_i^{-1}z_i/z_j;t)}{\theta_1(r_jz_i/z_j;t) \theta_1(r_i^{-1}z_i/z_j;t)} - 1 \right) \\
\cE(S) &:= \sum_{\Omega \in \cT(S)} \prod_{\substack{(i,j) \in E(\Omega) \\ i < j}} \left( \frac{\theta_1(z_i/z_j;t) \theta_1(r_jr_i^{-1}z_i/z_j;t)}{\theta_1(r_jz_i/z_j;t) \theta_1(r_i^{-1}z_i/z_j;t)} - 1 \right)
\end{align*}
where we note the only distinction between the two definitions is set that we sum over. These are related by
\[ \cE(S) = \sum_{\substack{d > 0 \\ \{S_1,\ldots,S_d\} \in \cP_S}} \prod_{\ell=1}^d \cK(S_\ell) \]
where $\cP_S$ is the collection of set partitions of $S$. By generalized M\"obius inversion (see~\cite[Lemma A.5]{Ahn20}, \cite[p964]{GZ18}), we have
\begin{equation}\label{eq:K_to_E}
    \cK(S) = \sum_{\substack{d > 0 \\ S_1,\ldots,S_d}} (-1)^{d-1} (d-1)! \prod_{\ell=1}^d \cE(S_\ell). 
\end{equation}
Furthermore, we have
\begin{equation}\label{eq:E_alternate} 
\cE(S) = \prod_{\substack{i < j \\ i,j \in S}} \frac{\theta_1(z_i/z_j;t) \theta_1(r_jr_i^{-1}z_i/z_j;t)}{\theta_1(r_jz_i/z_j;t) \theta_1(r_i^{-1}z_i/z_j;t)},
\end{equation}
which may be seen by writing the terms in the above product as 
\[
\left(\frac{\theta_1(z_i/z_j;t) \theta_1(r_jr_i^{-1}z_i/z_j;t)}{\theta_1(r_jz_i/z_j;t) \theta_1(r_i^{-1}z_i/z_j;t)} -1\right) + 1
\]
and expanding. Substituting~\eqref{eq:E_alternate} into~\eqref{eq:K_to_E} yields~\eqref{eq:C_def}, while the definition of $\cK$ yields RHS~\eqref{eq:mobius}, proving the claim.

Thus our cumulant becomes
\begin{align*}
& \left( \prod_{i=1}^m \frac{(t;t)_\infty^2}{(r_i t;t)_\infty (r_i^{-1}t;t)_\infty (1 - r_i^{-1})} \right) \\
& \quad \quad \times
\sum_{\Omega \in \cL([[1,m]]} \frac{1}{(2\pi\bi)^m} \oint \cdots \oint \prod_{\substack{(i,j) \in E(\Omega) \\ i < j}}\left( \frac{\theta_1(z_i/z_j;t) \theta_1(r_jr_i^{-1}z_i/z_j;t)}{\theta_1(r_jz_i/z_j;t) \theta_1(r_i^{-1}z_i/z_j;t)} - 1 \right) \prod_{i=1}^m \frac{F(\tau_i^\sharp,z_i)}{F(\tau_i^\sharp,r_i^{-1}z_i)} \frac{dz_i}{z_i}.
\end{align*}
We have $1/(1 - r_i^{-1}) = O(N)$ for $1 \le i \le m$, and
\[ \frac{\theta_1(z_i/z_j;t) \theta_1(r_jr_i^{-1}z_i/z_j;t)}{\theta_1(r_jz_i/z_j;t) \theta_1(r_i^{-1}z_i/z_j;t)} - 1 = O(1/N^2), \quad \quad 1 \le i < j \le m \]
by \Cref{thm:theta-1} where we recall $\theta_1(e^{2\pi\bi \eta};e^{2\pi\bi \omega}) = \Theta(\eta\,|\,\omega)$. Therefore, the summand indexed by $\Omega$ has decay of order $O(N^{m - 2|E(\Omega)|})$. Since $\Omega$ is connected on $m$ vertices, $|E(\Omega)| \ge m - 1$ from which we conclude that
\[ \kappa_m\left( \cF_{r_1}(\lambda^{(\tau_1^\sharp)}), \ldots, \cF_{r_m}(\lambda^{(\tau_m^\sharp)}) \right) = O(N^{2-m}) \]
which is $o(1)$ for $m \ge 3$.
\end{proof}

\section{Limit shape and fluctuations}\label{sec:main_proofs}

In this section we establish the limit shape, then prove the main results stated earlier in \Cref{subsec:main_res}, namely the Gaussian free field convergence for the $q^{\mvol}$ measure and its shift-mixed version. 

\subsection{Limit shape} \label{sec:limit_shape}
The result below shows the limit shape convergence stated earlier as \Cref{thm:limit_shape_intro}, and a statement on convergence of observables which will be used later. Recall the limit shape $\cH$ given in \Cref{def:limit_shape}, and the liquid region $\sL$ given by~\eqref{eq:def_Liquid}.

\begin{proposition} \label{thm:limit_shape}
For each $0 < \tau \le 1$ and $k \in \Z_{>0}$, we have the convergence
\begin{align} \label{eq:weak_convergence}
\lim_{N\to\infty} \frac{1}{2N} \sum_{y \in \frac{1}{2N} \Z'} \frac{1}{2N} \hh(\lfloor 2N \tau \rfloor, 2N y) t^{ky} = \int \cH(y) t^{ky} \, dy
\end{align}
in probability. Moreover, the height function converges uniformly in probability on vertical slices, i.e.
\begin{align} \label{eq:uniform_convergence}
\lim_{N\to\infty} \PP\left( \sup_{y \in \frac{1}{2N} \Z} \left| \frac{1}{2N} \hh(\lfloor 2N \tau \rfloor, 2N y) - \cH(y) \right| \ge \e \right) = 0
\end{align}
for each $\e > 0$.
\end{proposition}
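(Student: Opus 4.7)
The plan is to first establish~\eqref{eq:weak_convergence} for the subsequence of test functions $\{t^{2ky}\}_{k\in\Z_{>0}}$ via Chebyshev applied to the cumulant asymptotics of \Cref{sec:asymptotics}, then bootstrap to~\eqref{eq:uniform_convergence} using the Lipschitz and monotonicity structure of $h_N$, and finally recover~\eqref{eq:weak_convergence} for all $k$ by dominated convergence.

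Set $X_N^{(k)}:=\frac{1}{(2N)^2}\sum_{y\in\frac{1}{2N}\Z'}\hh(\lfloor 2N\tau\rfloor,2Ny)\,t^{2ky}$. \Cref{thm:moment_asymptotics} gives $\E[X_N^{(k)}]\to\frac{1}{(2k\log t)^2}\binom{2k}{k}$, while \Cref{thm:covariance_asymptotics} specialized to $\tau_1=\tau_2=\tau$, $k_1=k_2=k$ shows $\operatorname{Var}\!\left(\tfrac{1}{2N}\sum_y \bar{h}_N(\lfloor 2N\tau\rfloor,2Ny)\,t^{2ky}\right)=O(1)$, hence $\operatorname{Var}(X_N^{(k)})=O(N^{-2})$ and Chebyshev yields convergence in probability. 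To identify the limit with $\int_\R \cH(y)t^{2ky}\,dy$, integrate by parts (boundary terms vanish since $\cH$ is zero for $y<y_{\min}$ and $\cH(y)-y$ remains bounded as $y\to\infty$ while $t^{2ky}$ decays exponentially), substitute $t^{2y}=4\cos^2 u$ so that $\arctan\sqrt{4t^{-2y}-1}=u$, and apply a second integration by parts followed by the Wallis identity $\int_0^{\pi/2}\cos^{2k}u\,du=\frac{\pi\binom{2k}{k}}{2\cdot 4^k}$; this reproduces exactly $\frac{\binom{2k}{k}}{(2k\log t)^2}$. This establishes~\eqref{eq:weak_convergence} for all exponents of the form $t^{2ky}$ with $k\in\Z_{>0}$.

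For~\eqref{eq:uniform_convergence}, observe that $h_N(\tau,\cdot)$ is $1$-Lipschitz (since $\hh(\tau^\sharp,y^\sharp+1)-\hh(\tau^\sharp,y^\sharp)\in\{0,1\}$), monotone non-decreasing, and non-negative, and $\cH$ is likewise Lipschitz ($\cH'\in[0,1]$), monotone, and non-negative. The family $\{h_N(\tau,\cdot)\}_N$ is therefore equicontinuous and uniformly bounded on every compact interval, hence tight in the compact-open topology on $C(\R)$. By the preceding step any subsequential limit has the same integrals against $\{t^{2ky}\}_{k\ge 1}$ as $\cH$; Stone--Weierstrass applied to the algebra generated by $t^{2y}$ (dense in $C[a,b]$ for any compact $[a,b]$ via the substitution $s=t^{2y}$) then forces the limit to equal $\cH$, and we obtain $h_N\to\cH$ uniformly on compact sets in probability. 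To extend to all of $\R$, use the tails: for $y<y_{\min}-\delta$ we have $\cH(y)=0$, and weighted-observable estimates with test functions of the form $t^{-ky}$ show $h_N\to 0$ with high probability there; for $y\to\infty$ the explicit computation $\int_{y_{\min}}^\infty(1-\cH'(z))\,dz=\log 2/|\log t|=-y_{\min}$ (verified via $\int_0^{\pi/2}v\cot v\,dv=\tfrac{\pi}{2}\log 2$) gives $\cH(y)-y\to 0$, matching the asymptotic $h_N(\tau,y)-y\to 0$ once one bounds the maximal non-horizontal-lozenge height in probability. Once~\eqref{eq:uniform_convergence} is in hand, dominated convergence (using $0\le h_N(\tau,y)\le y+O(N^{-1})$ and the exponential decay of $t^{ky}$) extends~\eqref{eq:weak_convergence} to all $k\in\Z_{>0}$.

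The principal technical obstacle is the tail control required to upgrade compact-set uniform convergence to uniform convergence on all of $\R$: one needs uniform-in-$N$ probabilistic bounds on both the minimal and maximal non-horizontal-lozenge heights in column $\lfloor 2N\tau\rfloor$. Both ends can in principle be handled by weighted observables of the same type used in the first step, but making the estimates quantitative and uniform in $N$ requires care.
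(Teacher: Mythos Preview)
Your overall plan---first establish convergence of the weighted sums $X_N^{(k)}$ via the cumulant asymptotics, then upgrade to uniform convergence using the Lipschitz structure---matches the paper's. The computation identifying $\frac{1}{(2k\log t)^2}\binom{2k}{k}$ with $\int\cH(y)t^{2ky}\,dy$ via Wallis is also the same. The difference is in how you go from moment convergence to pointwise/uniform convergence, and here your argument has genuine gaps.

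First, the tightness claim: $\{h_N(\tau,\cdot)\}$ is equicontinuous, but \emph{not} deterministically bounded on compacts; $h_N(\tau,y)-y$ can be as large as minus the (random) height of the lowest non-horizontal lozenge. One can extract a probabilistic bound from the convergence of $X_N^{(1)}$, but this needs to be said.

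Second, and more seriously, your identification of subsequential limits does not go through as written. To pass from $h_N\to\tilde h$ locally uniformly and $\int h_N t^{2ky}\to\int\cH\,t^{2ky}$ to $\int\tilde h\,t^{2ky}=\int\cH\,t^{2ky}$ you must already control the tails of $h_N$ uniformly in $N$---exactly the obstacle you flag at the end. And even granting matching integrals, Stone--Weierstrass on $C[a,b]$ does not immediately yield $\tilde h=\cH$ on $\R$: after the substitution $s=t^{2y}$ the domain is $(0,\infty)$, and you would need to know $\tilde h-\cH$ is compactly supported (or at least moment-determined), which again requires tail information. Your proposal to control the left tail via ``observables of the form $t^{-ky}$'' does not work: the formulas of \Cref{sec:moment_formula} require $r=q^{2k}\in(0,1)$, so only test functions $t^{ky}$ with $k>0$ are available.

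The paper avoids this circularity by working with the \emph{derivative} $\partial_2\wt h_N\in[0,1]$ rather than $\wt h_N$ itself. After the change of variables $x=t^{2y}$ one obtains measures $d\nu_N$ on $(0,\infty)$ whose moments converge to those of a measure supported on $[0,4]$; the Hausdorff moment problem then gives weak convergence directly, with no tail issue at this stage. The remaining tail (integrating $d\nu_N/x$ over $[R,\infty)$ to recover $\wt h_N(\tau,y_0)$) is handled by the simple device of comparing $\int_0^\infty d\nu_N$ with $\int_0^R d\nu_N$, both of which are already known to converge. Pointwise convergence in probability follows, and the upgrade to uniform convergence uses only the deterministic monotonicity of $\wt h_N$ and of $\wt h_N(\tau,y)-y$, together with pointwise convergence at two well-chosen points $\pm R$.
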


Before the proof, we collect various facts related to the limit shape. Note that there exists a unique solution $\zeta := \zeta(\tau,y)$ in $\C^+$ of
\begin{equation}\label{eq:zeta_defining}
     (1 - t^{-\tau} \zeta^{-1})(1 - t^{\tau} \zeta) = t^{2y}
\end{equation} 
if and only if $y \in (\tfrac{\log 2}{\log t},\infty)$, explicitly given by
\[ \zeta = t^{-\tau} \frac{2 - t^{2 y} + \bi \sqrt{4 t^{2y} - t^{4y}}}{2}. \]

\begin{proposition} \label{thm:complex_coordinate}
The map $\zeta$ is a diffeomorphism from $\R \times (\tfrac{\log 2}{\log t},\infty)$ onto the upper half-plane. Additionally,
\begin{align} \label{eq:zeta_proportion}
\cH'(y) = \frac{\pi - \arg \zeta(\tau,y)}{\pi} \bbone\left(y > \tfrac{\log 2}{\log t}\right)
\end{align}
is independent of $\tau \in \R$, where $\cH$ is as defined in \Cref{def:limit_shape}.
\end{proposition}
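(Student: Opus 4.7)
The plan is to exploit a pleasant algebraic identity: the modulus of $\zeta(\tau,y)$ depends only on $\tau$, and its argument depends only on $y$. Specifically, a direct computation from the explicit formula gives
\[ |\zeta(\tau,y)|^2 = t^{-2\tau} \cdot \frac{(2-t^{2y})^2 + (4t^{2y} - t^{4y})}{4} = t^{-2\tau}, \]
so $\zeta$ lies on the circle of radius $t^{-\tau}$. Writing $\zeta(\tau,y) = t^{-\tau} e^{\bi \theta(y)}$ and reading off the real part from the explicit formula, we get $\cos \theta(y) = 1 - t^{2y}/2$ with $\theta(y) \in (0,\pi)$. As $y$ ranges over $(\tfrac{\log 2}{\log t},\infty)$, the quantity $t^{2y}/2$ ranges bijectively over $(0,2)$, so $\theta$ is a smooth strictly monotone bijection onto $(0,\pi)$ with nonvanishing derivative (from the formula $\theta'(y) = t^{2y}\log t / \sqrt{1 - (1-t^{2y}/2)^2}$).

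For the diffeomorphism claim, I would then observe that the map $(\tau,y) \mapsto (t^{-\tau}, \theta(y))$ is a smooth bijection from $\R \times (\tfrac{\log 2}{\log t},\infty)$ onto $(0,\infty) \times (0,\pi)$ with nonvanishing Jacobian, and composing with polar-to-Cartesian coordinates $(r,\theta) \mapsto r e^{\bi \theta}$ (which is a diffeomorphism from $(0,\infty) \times (0,\pi)$ onto $\C^+$) completes the argument.

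For the second claim, the task reduces to a trigonometric identity. Since $\arg \zeta(\tau,y) = \theta(y)$ is independent of $\tau$, the right-hand side of~\eqref{eq:zeta_proportion} is automatically $\tau$-independent, and the indicators $\bbone(0 < t^y < 2)$ in \Cref{def:limit_shape} and $\bbone(y > \tfrac{\log 2}{\log t})$ in~\eqref{eq:zeta_proportion} coincide because $t \in (0,1)$. It remains to check
\[ 2 \arctan\!\bigl(\sqrt{4t^{-2y} - 1}\bigr) = \pi - \theta(y). \]
Setting $\phi := (\pi - \theta)/2 \in (0,\pi/2)$, the double-angle formula gives $\cos(2\phi) = -\cos\theta = t^{2y}/2 - 1$, whence $\sin^2\phi = 1 - t^{2y}/4$ and $\cos^2\phi = t^{2y}/4$, so $\tan^2\phi = 4t^{-2y} - 1$. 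Taking positive roots (valid since $\phi \in (0,\pi/2)$) yields $\phi = \arctan(\sqrt{4t^{-2y}-1})$, which is exactly the desired identity.

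I do not expect any real obstacle here: the whole proposition is a direct unpacking of the explicit formulas, and the only mild care is in verifying that $\theta(y)$ is indeed a smooth bijection up to but not including the boundary values $0,\pi$, and in handling the half-angle root sign (which is immediate from the range of $\phi$).
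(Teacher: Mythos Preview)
Your proposal is correct and follows essentially the same approach as the paper: both exploit the polar decomposition $\zeta = t^{-\tau} e^{\bi\theta(y)}$, with modulus depending only on $\tau$ and argument only on $y$, and then verify the trigonometric identity linking $\theta(y)$ to $\arctan(\sqrt{4t^{-2y}-1})$. The only cosmetic differences are that the paper computes the identity via $2\arctan(X) = \arg((1+\bi X)^2)$ and expands the square, whereas you use the half-angle substitution $\phi = (\pi-\theta)/2$; and your diffeomorphism argument is slightly more explicit about the Jacobian than the paper's, which simply notes bijectivity onto each semicircle.
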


\begin{remark}
Recall that the height function differs between $(\tau,y)$ and $(\tau,y+1)$ exactly when there is no horizontal lozenge at $(\tau,y)$ by~\eqref{eq:height_function}. It follows that $1-\cH'(y) = \frac{\arg \zeta(\tau,y)}{\pi}$ may be interpreted as the asymptotic proportion of $\hloz$ locally near $(\tau,y)$ in the liquid region $\sL$.
\end{remark}

\begin{proof}
[Proof of {\Cref{thm:complex_coordinate}}]
It is clear that $0 < t^{2y} < 4$ is equivalent to $y > \tfrac{\log 2}{\log t}$. Observe that for $y > \tfrac{\log 2}{\log t}$,
\begin{align*}
\cH'(y) &:=\frac{2\arctan\left(\sqrt{4 t^{-2y} - 1}\right)}{\pi} \\
&= \frac{2 \arg(1 + \bi \sqrt{4 t^{-2y} - 1})}{\pi} \\
&= \frac{\arg\left((1 + \bi \sqrt{4t^{-2y} - 1})^2\right)}{\pi} \\
&= \frac{\arg\left(2 - 4t^{-2y} + 2\bi \sqrt{4t^{-2y} - 1}\right)}{\pi}.
\end{align*}
Since the argument is independent of modulus and $\arg(a + \bi b) = \pi - \arg(-a + \bi b)$ for $a \in \R$ $b > 0$, the above is equal to
\[ \frac{\pi - \arg \zeta(\tau,y)}{\pi} \]
for $(\tau,y) \in \R \times (\tfrac{\log 2}{\log t},\infty)$, which shows~\eqref{eq:zeta_proportion}. We note that this equality holds independent of $\tau \in \R$ because~$\tau$ only modifies the modulus of $\zeta$. Therefore~\eqref{eq:zeta_proportion} holds.

To check that $\zeta$ is a diffeomorphism onto the upper half-plane, it suffices to show that $\zeta$ is a bijection onto the upper half-plane. Observe that for fixed $\tau$, $\zeta$ maps bijectively onto the semicircle $\{|z| = t^{-\tau}\} \cap \C^+$ in the upper half-plane. By varying $\tau \in \R$, we see that $\zeta$ is a bijection onto the upper half-plane.
\end{proof}

We recall the map $\eta$ from~\eqref{eq:eta_intro}, which is naturally defined in terms of $\zeta$.

\begin{definition}\label{def:eta}
We define $\eta: \sL \to \frac{1}{2}(0,1) + \bi \frac{|\log t|}{2\pi} (0,1]$ by
\begin{equation}
    \eta(\tau,y) := \frac{1}{2\pi\bi} \log t^{2\tau}\zeta(\tau,y) = \frac{1}{2\pi\bi} \log\left(t^\tau \frac{2 - t^{2 y} + \bi \sqrt{4 t^{2y} - t^{4y}}}{2}\right),
\end{equation}
where we choose the branch of the logarithm so that the imaginary part is in $(0,\pi]$.
\end{definition}

\begin{remark} \label{remark:eta}
Since $|\zeta(\tau,y)| = t^{-\tau}$ for $(\tau,y) \in \sL$, we have
\[ \eta\left(\{\tau\} \times (\tfrac{\log 2}{\log t},\infty)\right) = \tfrac{1}{2}(0,1) + \tfrac{\tau |\log t|}{2\pi} \bi.
\]
\end{remark}

\begin{remark}
In the above we have been viewing $(\tau,y)$ as living in a fundamental domain of a covering space $\R \times (\tfrac{\log t}{\log 2},\infty) \to (\R/\Z) \times (\tfrac{\log t}{\log 2},\infty)$. We could have equivalently worked with the cover instead, which removes the apparent discontinuity at the boundary of the fundamental domain. However, we found that for our purposes, it is more convenient to work with the fundamental domain. 
\end{remark}

The following computation will be used in the proof of \Cref{thm:limit_shape}.

\begin{lemma}\label{thm:rho_moments}
For $k \in \Z_{\geq 1}$,
\begin{align} \label{eq:rho_moments}
\int_{-\infty}^\infty \cH'(y) t^{2ky} \, dy = -\frac{1}{2k \log t} \binom{2k}{k}.
\end{align}
\end{lemma}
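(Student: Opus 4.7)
The plan is to evaluate the integral directly by a chain of substitutions culminating in the classical Wallis integral. First I would observe that by Definition~\ref{def:limit_shape}, $\cH'(y) = 0$ for $y \le \frac{\log 2}{\log t}$, so the integral reduces to
\[
\int_{\log 2/\log t}^{\infty} \frac{2}{\pi}\arctan\!\bigl(\sqrt{4t^{-2y}-1}\bigr)\, t^{2ky}\, dy.
\]

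Next I would apply the substitution $u = t^{2y}$, which bijects $(\frac{\log 2}{\log t}, \infty)$ onto $(0, 4)$ with reversed orientation (since $\log t < 0$) and gives $dy = \frac{du}{2u\log t}$, producing
\[
-\frac{1}{\pi \log t}\int_0^4 \arctan\!\bigl(\sqrt{4/u - 1}\bigr)\, u^{k-1}\, du.
\]
The key step is then the trigonometric substitution $u = 4\sin^2\theta$ for $\theta \in (0, \pi/2]$. Under this, $\sqrt{4/u - 1} = \cot\theta$, so $\arctan(\cot\theta) = \frac{\pi}{2} - \theta$ linearizes the arctangent, and $u^{k-1}\,du = 2\cdot 4^k \sin^{2k-1}\theta\cos\theta\,d\theta$. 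The integral becomes
\[
2 \cdot 4^k \int_0^{\pi/2}\!\bigl(\tfrac{\pi}{2} - \theta\bigr)\sin^{2k-1}\theta \cos\theta\, d\theta.
\]

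Finally I would integrate by parts against $d\bigl(\sin^{2k}\theta / (2k)\bigr) = \sin^{2k-1}\theta \cos\theta \, d\theta$. The boundary terms vanish ($\frac{\pi}{2} - \theta$ at $\theta = \frac{\pi}{2}$ and $\sin^{2k}\theta$ at $\theta = 0$), leaving $\frac{1}{2k}\int_0^{\pi/2}\sin^{2k}\theta\, d\theta$, which is the classical Wallis integral equal to $\frac{\pi}{2 \cdot 4^k}\binom{2k}{k}$. Collecting all constants then yields $-\frac{1}{2k\log t}\binom{2k}{k}$ as claimed. The argument is essentially mechanical; the only step requiring thought is choosing the trigonometric substitution that converts $\arctan(\cot\theta)$ into the polynomial $\frac{\pi}{2} - \theta$, after which integration by parts plus Wallis concludes. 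No step presents a genuine obstacle. (As a sanity check, one notes that this result is also consistent with the $N \to \infty$ limit of Proposition~\ref{thm:moment_asymptotics} combined with the uniform convergence statement~\eqref{eq:uniform_convergence} and integration by parts in $y$, since $\cH$ vanishes at $y = \frac{\log 2}{\log t}$ and $\cH(y) t^{2ky} \to 0$ as $y \to \infty$.)
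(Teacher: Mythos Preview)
Your proof is correct and follows essentially the same route as the paper: reduce to the Wallis integral via a substitution $t^y \leftrightarrow \sin\theta$ combined with one integration by parts. The only cosmetic difference is the order of operations---the paper integrates by parts first (differentiating the $\arctan$ to produce $1/\sqrt{1-x^2}$) and then substitutes $x=\sin\phi$, whereas you substitute first (so that $\arctan(\cot\theta)=\tfrac{\pi}{2}-\theta$) and integrate by parts afterward; the computations are otherwise identical.
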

\begin{proof}
Plugging in our expression for $\cH$, the left hand side of \eqref{eq:rho_moments} becomes
\[ \int_{\frac{\log 2}{\log t}}^\infty \frac{2 \arctan(\sqrt{4 t^{-2y} - 1})}{\pi} t^{2ky} \, dy. \]
Changing variables $x = t^y/2$ and integrating by parts, we see that the desired equality \eqref{eq:rho_moments} is equivalent to
\[ \int_0^1 \frac{x^{2k}}{\sqrt{1 - x^2}} \, dx = \frac{1}{2^{2k+1}} \binom{2k}{k} \pi. \]
Note that this is the Wallis integral, indeed the left hand side becomes
\[ \int_0^{\pi/2} \sin^{2k}(\phi) \, d\phi \]
after changing variables $x = \sin \phi$.
\end{proof}

\begin{proof}[Proof of \Cref{thm:limit_shape}]

In this proof, we consider a linear interpolation of the rescaled height function which is convenient for stating intermediate weak convergence results that lead to the desired convergences~\eqref{eq:weak_convergence} and~\eqref{eq:uniform_convergence}. We first use \Cref{thm:moment_asymptotics} to obtain moment asymptotics for these linear interpolations. We then show that the moments of the measure with density $\cH'(y)\,dy$ coincide with the limiting moments, which immediately gives us~\eqref{eq:weak_convergence}. We then show that the moment convergence implies weak convergence which in turn implies~\eqref{eq:uniform_convergence} as a consequence of our linear interpolations being $1$-Lipschitz.

\textbf{Step 1.} Fix $0 < \tau \le 1$. Let $\wt{h}_N(\tau,y)$ be the linear interpolation of the function $y \mapsto \tfrac{1}{2N} \hh(\lfloor 2N \tau \rfloor, 2Ny)$ on $\tfrac{1}{2N}\Z'$. The goal of this step is to show
\begin{align} \label{eq:limit_linear_interpolation}
\int_{-\infty}^\infty \wt{h}_N(\tau,y) t^{2ky} \to \frac{1}{(2k \log t)^2} \binom{2k}{k}
\end{align}
in probability as $N\to\infty$.

Observe that $\wt{h}_N(\tau,y)$ is $1$-Lipschitz and monotonically increasing in $y$. Beyond these properties, the main utility for considering this linear interpolation is that our height function observables can be expressed as integrals of $\wt{h}_N$:
\begin{align} \label{eq:linear_interpolation}
\int_{-\infty}^\infty \wt{h}_N(\tau,y) t^{2ky} dy = \left( \frac{t^{\frac{k}{2N}} - t^{-\frac{k}{2N}}}{2k \log t} \right)^2 \sum_{y \in \frac{1}{2N}\Z'} \hh(\lfloor 2N\tau\rfloor,2Ny) t^{2ky}.
\end{align}
We now prove~\eqref{eq:linear_interpolation}.

By integrating by parts, we have
\begin{align*}
\frac{2k \log t}{t^{\frac{k}{N}} - 1} \int_{-\infty}^\infty \wt{h}_N(\tau,y) t^{2ky} \, dy &= \frac{1}{t^{\frac{k}{N}} - 1} \wt{h}_N(\tau,y) t^{2ky} \Bigg|_{-\infty}^\infty - \frac{1}{t^{\frac{k}{N}} - 1} \int_{-\infty}^\infty \partial_2 \wt{h}_N(\tau,y) t^{2ky} \, dy.
\end{align*}
Note that the boundary term vanishes. Setting $x_i := x_i^{(\tau)} := \tfrac{\lambda^{(\lfloor 2N \tau \rfloor)}_i - i + \frac{1}{2}}{2N}$, we have
\[ \partial_2 \wt{h}_N(\tau,y) = 1 - \sum_{i=1}^\infty \bbone(y \in [x_i^{(\tau)},x_i+\tfrac{1}{2N}]) \]
for all $y \not \in \frac{1}{2N}\Z'$.

Thus the above is equal to
\[ -\frac{1}{t^{\frac{k}{N}} - 1} \int_{-\infty}^\infty \! \Bigg( 1 - \sum_{i =1}^\infty \bbone(y \in [x_i^{(\tau)},x_i+\tfrac{1}{2N}]) \Bigg) t^{2ky} \, dy. \]
If $\ell$ is the length of $\lambda_i^{(\lfloor 2N \tau\rfloor)}$, then $x_{\ell+1}+\tfrac{1}{2N} = \tfrac{-\ell+\frac{1}{2}}{2N}$ and we can rewrite this integral as
\[ -\frac{1}{t^{\frac{k}{N}} - 1} \int_{\frac{-\ell+\frac{1}{2}}{2N}}^\infty \! \Bigg( 1 - \sum_{i=1}^\ell \bbone(y \in [x_i^{(\tau)},x_i+\tfrac{1}{2N}]) \Bigg) t^{2ky} \, dy \]
which evaluates to
\[ \frac{1}{2k \log t} \left( \frac{t^{\frac{k}{N}(-\ell+\frac{1}{2})}}{t^{\frac{k}{N}} - 1} + \sum_{i=1}^\ell t^{2kx_i} \right) = \frac{t^{-\frac{k}{2N}}}{2k \log t} \cF_{t^{\frac{k}{N}}}(\lambda^{(\lfloor 2N \tau \rfloor)})  \]
where $\cF$ is as in~\eqref{eq:def_cF}. Thus
\[ \int_{-\infty}^\infty \wt{h}_N(\tau,y) t^{2ky} dy = \frac{t^{\frac{k}{2N}} - t^{-\frac{k}{2N}}}{(2k \log t)^2} \cF_{t^{\frac{k}{N}}}(\lambda^{(\lfloor 2N \tau \rfloor)}). \]
We obtain~\eqref{eq:linear_interpolation} by applying \Cref{thm:height_function_observable} to the RHS.

Since
\[ \lim_{N\to\infty} (2N)^2\left(\frac{t^{\frac{k}{2N}} - t^{-\frac{k}{2N}}}{2k \log t}\right)^2 = 1, \]~\eqref{eq:linear_interpolation} implies that
\begin{align} \label{eq:same_limit}
\int_{-\infty}^\infty \wt{h}_N(\tau,y) t^{2ky} dy \quad \quad \mbox{and} \quad \quad \frac{1}{2N} \sum_{y \in \frac{1}{2N}\Z'} \frac{\hh(\lfloor 2N\tau\rfloor,2Ny)}{2N} t^{2ky}
\end{align}
share the same limit. By \Cref{thm:moment_asymptotics} and \Cref{thm:covariance_asymptotics} (the latter gives concentration by the usual Chebyshev argument), we obtain~\eqref{eq:limit_linear_interpolation}.

\textbf{Step 2.}
 Set
\[ \rho(y) := \cH'(y) = \frac{2\arctan\left(\sqrt{4 t^{-2y} - 1}\right)}{\pi} \bbone(0 < t^{2y} < 4). \]
Applying integration by parts on~\eqref{eq:limit_linear_interpolation} and using \Cref{thm:rho_moments}, we obtain
\begin{align} \label{eq:derivative_convergence}
\lim_{N\to\infty} \int_{-\infty}^\infty \partial_2 \wt{h}_N(\tau,y) t^{2ky} \, dy = \int_{-\infty}^\infty \rho(y) t^{2ky} \, dy.
\end{align}
This immediately implies~\eqref{eq:weak_convergence} by applying integration by parts on both sides of \eqref{eq:derivative_convergence} and noting that
\[ \cH(y) = \int_{-\infty}^y \rho(s) \, ds. \]

\textbf{Step 3.} Let $d\nu_N(x) := \partial_2 \wt{h}_N(\tau,\tfrac{\log x}{2\log t}) \, dx$ and $d\nu(x) := \rho(\tfrac{\log x}{2\log t}) \, dx$. Our next goal is to establish
\begin{align} \label{eq:derivative_transformed_convergence}
  d\nu_N(x) \to d\nu(x)
\end{align}
weakly in probability. We will need this weak convergence in the next step to prove pointwise convergence in probability of height functions, which we upgrade to the full uniform convergence statement~\eqref{eq:uniform_convergence} in Step 5. We note that it is more convenient to work in the transformed variables $y = \tfrac{\log x}{2 \log t}$ as $\rho(\tfrac{\log x}{2\log t})$ is compactly supported and the observables $t^{2ky}$ become moments.

Changing variables in~\eqref{eq:derivative_convergence}, we have
\[ \lim_{N\to\infty} \int_0^\infty x^{k-1} \, d\nu_N x = \int_0^\infty x^{k-1} \, d\nu(x) \]
for integers $k > 0$. Using the fact that
\[ \var\left( \int_0^\infty x^{k-1} d\nu(x) \right) \to 0 \]
by \Cref{thm:covariance_asymptotics}, the fact that $\nu$ is supported in $[0,4]$, and the bound
\[ \left| \int_0^\infty x^{k-1} d\nu(x) \right| = \left| 2 \log t\int_{-\infty}^\infty \rho(y) t^{2ky} \, dy \right| = \left| \frac{1}{k} \binom{2k}{k} \right| \le 4^k \]
for $k \in \Z_{>0}$, the desired weak convergence in probability follows exactly in the proof of~\cite[Lemma 2.1.7]{AGZ10}.

\textbf{Step 4.} In this step, we prove that 
\begin{align} \label{eq:convergence_in_prob}
\wt{h}_N(\tau,y) \to \cH(y)
\end{align}
in probability as $N\to\infty$, for all $0 < \tau \le 1$ and $y \in \R$.

The convergence in~\eqref{eq:convergence_in_prob} is equivalent to the statement that 
\begin{align} \label{eq:1/x_test}
-\int_{t^{2y_0}}^\infty \partial_2 \wt{h}_N(\tau,\tfrac{\log x}{2 \log t}) \, \frac{dx}{2 x \log t} \to -\int_{t^{2y_0}}^\infty \rho(\tfrac{\log x}{2 \log t}) \, \frac{dx}{2 x \log t}
\end{align}
in probability as $N\to\infty$ for each $y_0 \in \R$, because the LHS of~\eqref{eq:1/x_test} is equal to
\[ \int_{-\infty}^{y_0} \partial_2 \wt{h}_N(\tau,y) \, dy = \wt{h}_N(\tau,y_0) \]
and the RHS is equal to
\[ \int_{-\infty}^{y_0} \rho(y) \, dy = \cH(y_0) \]
by change of variables $x = t^{2y}$. To prove~\eqref{eq:1/x_test} we would like to be able to invoke the weak convergence result from Step 3. However, there is a slight issue here since the test function in~\eqref{eq:1/x_test} has only $1/x$ decay and therefore~\eqref{eq:1/x_test} is not directly implied by weak convergence in probability. We address this issue by splitting the LHS of~\eqref{eq:1/x_test} into a main term and tail term and showing convergence on each separately.

The weak convergence in probability~\eqref{eq:derivative_transformed_convergence} implies that for each $R > 0$,
\begin{equation}\label{eq:conv_without_tail}
    \int_{t^{2y_0}}^R \partial_2 \wt{h}_N(\tau,\tfrac{\log x}{2 \log t}) \frac{dx}{2x \log t} \to \int_{t^{2y_0}}^R \rho(\tfrac{\log x}{2 \log t}) \frac{dx}{2x \log t}.
\end{equation}
Thus~\eqref{eq:1/x_test} will follow if we can show convergence of the tail term
\begin{align} \label{eq:conv_with_tail}
\int_R^\infty \partial_2 \wt{h}_N(\tau,\tfrac{\log x}{2 \log t}) \frac{dx}{2x \log t} \to 0 = \int_R^\infty \rho(\tfrac{\log x}{2 \log t}) \frac{dx}{2x \log t}
\end{align}
in probability as $N \to \infty$, for some $R > 4$; recall $\rho(\tfrac{\log x}{2 \log t})$ is supported in $[0,4]$, which is why the RHS is $0$. We will show~\eqref{eq:conv_with_tail} for $R > \max\left(4,\tfrac{1}{2|\log t|}\right)$.

Since $R > \tfrac{1}{2|\log t|}$ we may dominate the LHS of~\eqref{eq:conv_with_tail} by
\begin{equation}\label{eq:tail_to_0}
\abs*{\int_R^\infty \partial_2 \wt{h}_N(\tau,\tfrac{\log x}{2 \log t}) \frac{dx}{2x \log t}} = -\int_R^\infty \partial_2 \wt{h}_N(\tau,\tfrac{\log x}{2 \log t}) \frac{dx}{2x \log t} \le \int_R^\infty \partial_2 \wt{h}_N(\tau,\tfrac{\log x}{2 \log t}) \, dx,
\end{equation}
hence it suffices to show
\[ \int_R^\infty \partial_2 \wt{h}_N(\tau,\tfrac{\log x}{2 \log t}) \, dx \to 0 \]
in probability as $N\to\infty$. This follows from the convergences
\begin{align*}
\int_0^\infty \partial_2 \wt{h}_N(\tau,\tfrac{\log x}{2 \log t}) \, dx = \int_{-\infty}^\infty \partial_2 \wt{h}_N(\tau,y) t^{2y} \, dy \to \int_{-\infty}^\infty \rho(y) t^{2y} \, dy = \int_0^\infty \rho(\tfrac{\log x}{2 \log t}) \, dx 
\end{align*}
and 
\begin{align*}
\int_0^R \partial_2 \wt{h}_N(\tau,\tfrac{\log x}{2 \log t}) \, dx \to \int_0^R \rho(\tfrac{\log x}{2 \log t}) \, dx
\end{align*}
in probability as $N\to\infty$, where the former uses~\eqref{eq:weak_convergence} (shown in Step 2) and latter uses~\eqref{eq:derivative_transformed_convergence}.

\textbf{Step 5.} In this step we use the pointwise convergence in probability of~\eqref{eq:convergence_in_prob} to deduce uniform convergence~\eqref{eq:uniform_convergence}.

The convergence~\eqref{eq:convergence_in_prob} implies
\[ \lim_{N \to \infty} \PP\left(\sup_{0 \le i \le m} \left|\cH(y_i) - \wt{h}_N(\tau,y_i)\right| \ge \e/9 \right) = 0 \]
for any $\e > 0$, $y_0,\ldots,y_m \in \R$, and $m \in \Z_{>0}$. Fix $\e > 0$ arbitrarily small and $R > 0$ arbitrarily large. By taking $m = \lfloor 2R/(\e/9) \rfloor$ and $y_i = -R + i\tfrac{\e}{9}$ for $i = 0,\ldots,m$, we have that for each $y \in [-R,R]$ there exists some $0 \le i \le m$ such that $|y - y_i| < \e/9$. Since $\cH$ and $\wt{h}_N$ are $1$-Lipschitz
\begin{align*}
|\cH(y) - \wt{h}_N(\tau,y)| &\le |\cH(y) - \cH(y_i)| + |\cH(y_i) - \wt{h}_N(\tau,y_i)| + |\wt{h}_N(\tau,y_i) - \wt{h}_N(\tau,y)| \\
&< \frac{2\e}{9} + |\cH(y_i) - \wt{h}_N(\tau,y_i)|,
\end{align*}
which implies
\[ \PP\left(\sup_{y \in [-R,R]} |\cH(y) - \wt{h}_N(\tau,y)| \ge \e/3\right) \le \PP\left(\sup_{0 \le i \le m} |\cH(y_i) - \wt{h}_N(\tau,y_i)| \ge \e/9\right). \]

From the convergence of the RHS to $0$ as $N\to\infty$, to show uniform convergence in probability~\eqref{eq:uniform_convergence} it suffices to show that
\begin{align} \label{eq:uniform_convergence_inequality}
\PP\left(\sup_{y \in \R} |\cH(y) - \wt{h}_N(\tau,y)| \ge \e \right) \le \PP\left(\sup_{y \in [-R,R]} |\cH(y) - \wt{h}_N(\tau,y)| \ge \e/3\right)
\end{align}
for $R$ sufficiently large. For this, we need to take $R$ large enough so that
\begin{align} \label{eq:H_y<<0}
\cH(y) = 0 \quad \quad \text{for all } y \in (-\infty,-R]
\end{align}
and
\begin{align} \label{eq:H_y>>0}
0 \le \cH(y) - y \le \e/3 \quad \quad \text{for all } y \in [R,\infty).
\end{align}
The existence of such an $R$ is guaranteed by the explicit formula for $\cH(y)$.

Since $\wt{h}_N(\tau,y)$ is a nonnegative and increasing function of $y$, if
\[ |\cH(-R) - \wt{h}_N(\tau,-R)| = \wt{h}_N(\tau,-R) < \e/3, \]
then
\[ |\cH(y) - \wt{h}_N(\tau,y)| = \wt{h}_N(\tau,y) < \e/3 \]
for all $y \in (-\infty,-R]$, where we used~\eqref{eq:H_y<<0}. Thus
\begin{align} \label{eq:y<<0_bound}
\PP\left( \sup_{y \in (-\infty,R]} |\cH(y) - \wt{h}_N(\tau,y)| \ge \e/3 \right) \le \PP\left(\sup_{y \in [-R,R]} |\cH(y) - \wt{h}_N(\tau,y)| \ge \e/3\right).
\end{align}
Similarly, we can check that $h_N(\tau^\sharp,y^\sharp) - y^\sharp - \tfrac{1}{2}$ is nonnegative and decreasing. Indeed, we have
\[ h_N(\tau^\sharp,y^\sharp + 1) - h_N(\tau^\sharp,y^\sharp) = -\bbone(\hloz~\mbox{at $(\tau,y)$}) \]
which establishes the monotonicity, and we know that $h_N(\tau^\sharp,y^\sharp) = y^\sharp + \tfrac{1}{2}$ for $y^\sharp$ large enough which establishes the nonnegativity. This implies that $\wt{h}_N(\tau,y) - y$ is nonnegative and decreasing. Thus, if
\[ |\cH(R) - \wt{h}_N(\tau,R)| = |(\cH(R) - R) - (\wt{h}_N(\tau,R) - R)| < \e/3, \]
then for $y \in (R,\infty)$, \eqref{eq:H_y>>0} implies that
\[ 0 \le \wt{h}_N(\tau,y) - y \le \wt{h}_N(\tau,R) - R \le 2\e/3 \]
so that
\[ |\cH(y) - \wt{h}_N(\tau,y)| \le |\cH(y) - y| + |\wt{h}_N(\tau,y) - y| < \e. \]
Therefore
\begin{align} \label{eq:y>>0_bound}
\PP\left( \sup_{y \in \R} |\cH(y) - \wt{h}_N(\tau,y)| \ge \e \right) \le \PP\left( \sup_{y \in (-\infty,R]} |\cH(y) - \wt{h}_N(\tau,y)| \ge \e/3 \right).
\end{align}
Combining~\eqref{eq:y<<0_bound} and~\eqref{eq:y>>0_bound} yields~\eqref{eq:uniform_convergence_inequality}, which completes the proof.
\end{proof}

\subsection{Gaussian free field convergence for the unshifted model} 
Let $\bar{h}_N$ be the centered height function of the $q^{\mvol}$ measure and $\eta$ be as in Definition~\ref{def:eta}.

\begin{proof}[Proof of {\Cref{thm:unshifted_gff_convergence_intro}}]
Set
\[ \bar{h}_N(\tau^\sharp,y^\sharp) := \hh(\tau^\sharp,y^\sharp) - \E \hh(\tau^\sharp,y^\sharp). \]
By \Cref{thm:covariance_asymptotics,thm:cumulant_asymptotics}, we know that the vector 
\[
\left( \frac{1}{2N} \sum_{y \in \frac{1}{2N} (\Z+\frac{1}{2})} \bar{h}_N(\floor{2N\tau_i},2Ny) t^{k_iy} \right)_{1 \le i \le n}
\]
from \eqref{eq:prelimit_vector} converges to a Gaussian vector in distribution. Therefore, it suffices to show that
\begin{align} \label{eq:covariance_gff}
\begin{split}
& \lim_{N\to\infty} \cov \left( \frac{1}{2N} \sum_{y\in\tfrac{1}{2N}\Z'} \bar{h}_N(\lfloor 2N\tau_1 \rfloor,2Ny) t^{2k_1y}, \frac{1}{2N} \sum_{y \in \tfrac{1}{2N}\Z'} \bar{h}_N(\lfloor 2N \tau_2 \rfloor,2N y)t^{2k_2y}\right) \\
& \quad \quad = \frac{1}{\pi} \int\limits_{\tfrac{\log 2}{\log t}}^{\infty}\int\limits_{\tfrac{\log 2}{\log t}}^{\infty} G(\eta(\tau_1,y_1),\eta(\tau_2,y_2))t^{2k_1y_1}t^{2k_2y_2}\, dy_1dy_2,
\end{split}
\end{align}
where $G(\eta_1,\eta_2)=- \frac{1}{2\pi} \log \left(
\frac{\Theta(\eta_1-\eta_2 \, | \, \omega)}
{\Theta(\eta_1+\overline\eta_2 \, | \, \omega)}
\right)$ is the Green's function, and $t = e^{2 \pi \bi \omega} \in (0,1)$.

We prove the statement for $\tau_1 \le \tau_2$ without loss of generality. By \Cref{thm:covariance_asymptotics}, for any real $c_1$ and $c_2$ satisfying $c_2 - \tfrac{|\log t|}{2\pi} < c_1 < c_2$,
\begin{align}\label{eq:covariance_limit}
\begin{split}
& \lim_{N\to\infty} \cov \left( \frac{1}{2N} \sum_{y\in\tfrac{1}{2N}\Z'} \bar{h}_N(\lfloor 2N\tau_1 \rfloor,2Ny) t^{2k_1y}, \frac{1}{2N} \sum_{y \in \tfrac{1}{2N}\Z'} \bar{h}_N(\lfloor 2N \tau_2 \rfloor,2N y)t^{2k_2y}\right) \\
& \quad = \frac{1}{4k_1k_2(2\pi\bi\log t)^2}
\int\limits_{-\tfrac12+\bi c_1}^{\tfrac12+\bi c_1}
\int\limits_{-\tfrac12+\bi c_2}^{\tfrac12+\bi c_2}
-\partial^2\log\Theta(\eta_1-\eta_2\,|\,\omega)
\prod_{i=1}^2(1-t^{\tau_i} e^{-2\pi\bi\eta_i})^{k_i}(1-t^{-\tau_i} e^{2\pi\bi\eta_i})^{k_i}\,d\eta_i
\end{split}
\end{align}
where $t = e^{2\pi\bi\omega}$. It suffices, by continuity, to establish the equality of the RHS of~\eqref{eq:covariance_limit} with the RHS of~\eqref{eq:covariance_gff} for $\tau_1 < \tau_2$.

Note that the RHS of~\eqref{eq:covariance_limit} can be rewritten as
\begin{align*}
&\frac{1}{4k_1k_2(2\pi\bi \log t)^2}
\left(
\int\limits_{0+\bi c_1}^{1/2+\bi c_1}
\int\limits_{0+\bi c_2}^{1/2+\bi c_2}
+
\int\limits_{0+\bi c_1}^{1/2+\bi c_1}
\int\limits_{-1/2+\bi c_2}^{0+\bi c_2} 
+
\int\limits_{-1/2+\bi c_1}^{0+\bi c_1}
\int\limits_{0+\bi c_2}^{1/2+\bi c_2}
+
\int\limits_{-1/2+\bi c_1}^{0+\bi c_1}
\int\limits_{-1/2+\bi c_2}^{0+\bi c_2}
\right)
\\
& \quad \quad \quad \quad \quad \quad \quad \quad \quad \quad \quad \quad \quad \quad \quad \quad \quad 
\left(
-\partial^2\log\Theta(\eta_1-\eta_2\, |\,\omega)
\prod_{i=1}^2(1-t^{\tau_i} e^{-2\pi\bi\eta_i})^{k_i}(1-t^{-\tau_i} e^{2\pi\bi\eta_i})^{k_i}\,d\eta_i
\right)\\
& = \sum_{\diamond_1,{\diamond_2}\in\{+,-\}}
\frac{1}{4k_1k_2(2\pi\log t)^2}
\int\limits_{0+\bi c_1}^{1/2+\bi c_1}
\int\limits_{0+\bi c_2}^{1/2+\bi c_2}
\partial^2\log\Theta(\eta_1^{\diamond_1}-\eta_2^{\diamond_2}\, |\,\omega)
\prod\limits_{i=1}^2(1-t^{\tau_i} e^{-2\pi\bi\eta_i^{\diamond_i}})^{k_i}(1-t^{-\tau_i} e^{2\pi\bi\eta_i^{\diamond_i}})^{k_i}\,d\eta_i,
\end{align*}
where $\eta^{+}_i=\eta_i$ and $\eta^{-}_i=-\overline\eta_i$.

By definition of $\eta$ and~\eqref{eq:zeta_defining}, the following holds
\[ -\frac{(1 - t^{-\tau} e^{2\pi\bi\eta(\tau,y)})^2}{t^{-\tau} e^{2\pi\bi\eta(\tau,y)}} = (1-t^{\tau}e^{-2\pi \bi \eta(\tau,y)})(1-t^{-\tau}e^{2\pi \bi \eta(\tau,y)})=t^{2y}.\]
Because $e^{2 \pi \bi(-\overline{\eta})} = \overline{e^{2 \pi \bi \eta}}$, the same holds with $\eta(\tau,y)$ replaced by $-\overline{\eta(\tau,y)}$. Therefore 
\[ -\frac{(1 - t^{-\tau} e^{2\pi\bi\eta(\tau_i,y_i)})^{2k_i}}{t^{-k_1 \tau_1} e^{2\pi\bi k_i\eta(\tau_i,y_i)}} = t^{2 k_iy_i}
\]
and similarly with $\eta_i(\tau_i,y_i)$ replaced by $-\overline{\eta_i(\tau_i,y_i)}$.

By \Cref{remark:eta}, we have $\eta$ maps $\{\tau\} \times (\tfrac{\log 2}{\log t},\infty)$ onto $\tfrac{1}{2}(0,1)+ \tfrac{\tau|\log t|}{2\pi} \bi$. Thus if we choose 
\[ c_i = \frac{\tau_i|\log t|}{2\pi}, \quad\quad i = 1,2, \]
where we note $c_2 - \tfrac{|\log t|}{2\pi} < c_1 < c_2$, then we may change variables $\eta_i$ to $y_i$ where $\eta_i = \eta(\tau_i,y_i)$ for $i = 1,2$ to obtain 
\begin{align*}
\sum_{\diamond_1,{\diamond_2}\in\{+,-\}}
\frac{(-1)^{1-\bbone_{\diamond_1=\diamond_2}} }{4k_1k_2(2\pi\log t)^2}\int\limits_{\tfrac{\log 2}{\log t}}^{\infty}\int\limits_{\tfrac{\log 2}{\log t}}^{\infty}
\partial^2\log\Theta(\eta^{\diamond_1}(\tau_1,y_1)-\eta^{\diamond_2}(\tau_2,y_2)\, |\,\omega)
\prod\limits_{i=1}^2
t^{2k_iy_i}
\frac{\partial\eta^{\diamond_1}(\tau_i,y_i)}{\partial y_i}
\,dy_i,
\end{align*}
where $\eta^{+}=\eta$ and $\eta^{-}=-\overline\eta$.
Finally, integrate by parts to get
\begin{align*}
& \frac{1}{(2\pi)^2}\left(
\sum_{\diamond_1,{\diamond_2}\in\{+,-\}}
(-1)^{\bbone_{\diamond_1=\diamond_2}}
\int\limits_{\tfrac{\log 2}{\log t}}^{\infty}\int\limits_{\tfrac{\log 2}{\log t}}^{\infty}
\log\Theta(\eta^{\diamond_1}(\tau_1,y_1)-\eta^{\diamond_2}(\tau_2,y_2)\, |\,\omega)
\prod\limits_{i=1}^2
t^{2k_iy_i}
\,dy_i\right) \\
& \quad \quad =
-\frac{1}{4\pi^2}
\int\limits_{\tfrac{\log 2}{\log t}}^{\infty}\int\limits_{\tfrac{\log 2}{\log t}}^{\infty}
\log \left(
\frac{\Theta(\eta(\tau_1,y_1)-\eta(\tau_2,y_2)\,|\,\omega)\Theta(-\overline\eta(\tau_1,y_1)+\overline\eta(\tau_2,y_2)\,|\,\omega)}
{\Theta(\eta(\tau_1,y_1)+\overline\eta(\tau_2,y_2)\,|\,\omega)\Theta(-\overline\eta(\tau_1,y_1)-\eta(\tau_2,y_2)\,|\,\omega)}
\right)
t^{2k_1y_1}t^{2k_2y_2}\, dy_1dy_2.
\end{align*}
Using the fact that $\theta_1(\overline{z};t) = \overline{\theta_1(z;t)}$, we have
\begin{align*}
\frac{\Theta(\eta(\tau_1,y_1)-\eta(\tau_2,y_2) \, | \, \omega)\Theta(-\overline\eta(\tau_1,y_1)+\overline\eta(\tau_2,y_2)\, | \, \omega)}
{\Theta(\eta(\tau_1,y_1)+\overline\eta(\tau_2,y_2)\, | \, \omega)\Theta(-\overline\eta(\tau_1,y_1)-\eta(\tau_2,y_2)\, | \, \omega)} &= \frac{\theta_1(\zeta_1/\zeta_2;t) \theta_1(\overline{\zeta}_1/\overline{\zeta}_2;t)}{\theta_1(\zeta_1/\overline{\zeta}_2;t) \theta_1(\overline{\zeta}_1/\zeta_2;t)} \\
&= \left| \frac{\theta_1(\zeta_1/\zeta_2;t)}{\theta_1(\zeta_1/\overline{\zeta}_2;t)} \right|^2 \\
&= \left| \frac{\Theta(\eta(\tau_1,y_1)-\eta(\tau_2,y_2)\, | \, \omega)}{\Theta(\eta(\tau_1,y_1)+\overline\eta(\tau_2,y_2)\, | \, \omega)} \right|^2,
\end{align*}
where $\zeta_i = e^{2\pi\bi \eta(\tau_i,y_i)}$ for $i = 1,2$. This completes the proof. 
\end{proof}

\subsection{Gaussian free field convergence for the shift-mixed model}

In this section, we deduce Gaussian free field convergence for the shift-mixed model from the corresponding statement for the unshifted model. This argument in no way relies on the fact that the shift distribution is a discrete Gaussian; in fact, we show a general result for discrete distributions on $\Z$ subject to some moment conditions, and then specialize.

\begin{proposition}\label{thm:shift-mixed_fluctuations}
Fix $0 < t < 1$. Let $S$ be a $\Z$-valued random variable with moment generating function defined in a neighborhood of $0$. Then for any $\tau_1,\ldots,\tau_n \in (0,1]$ and integers $k_1,\ldots,k_n > 0$, the random vector 
\begin{equation}\label{eq:shift_prelimit_vec}
    \left( \frac{1}{2N} \sum_{y \in \frac{1}{2N} \Z'} \left(\hh\left(\lfloor 2N\tau_i\rfloor,2Ny-S\right) - \E[\hh\left(\lfloor 2N\tau_i\rfloor,2Ny\right)]\right) t^{k_iy}\right)_{1 \leq i \leq n}
\end{equation}
converges in distribution as $N \to \infty$ to the random vector
\begin{equation}\label{eq:shift_postlimit_vec}
    \left( \int_{\frac{\log 2}{\log t}}^\infty \left( \frac{1}{\sqrt{\pi}}\Phi(\eta(\tau_1,y)) - S \cH'(y) \right) t^{k_1y} dy , \ldots, \int_{\frac{\log 2}{\log t}}^\infty \left( \frac{1}{\sqrt{\pi}} \Phi(\eta(\tau_n,y)) - S \cH'(y) \right) t^{k_ny} dy \right),
\end{equation} 
where $\Phi$ is the Gaussian free field on the cylinder with Dirichlet boundary conditions as in \Cref{def:GFF} and $\cH$ is as defined in \Cref{def:limit_shape}.
\end{proposition}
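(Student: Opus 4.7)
The plan is to use the defining coupling of the shift-mixed model, realizing it as an independent pair $(\vec\lambda, S)$ with $S$ independent of the unshifted process $\vec\lambda$, so that $\hs(\tau^\sharp, y^\sharp) = \hh(\tau^\sharp, y^\sharp - S)$ in distribution. This reduces the problem to the unshifted Gaussian free field convergence of \Cref{thm:unshifted_gff_convergence_intro}. Since $S \in \Z$, the reindexing $y' = y - S/(2N)$ preserves the lattice $\frac{1}{2N}\Z'$ and rewrites the $i$-th coordinate $X_i^N$ of the prelimit vector \eqref{eq:shift_prelimit_vec} as
\begin{equation*}
X_i^N \;=\; t^{k_i S/(2N)}\bigl(L_i^N - \E L_i^N\bigr) \;+\; \bigl(t^{k_i S/(2N)} - 1\bigr)\,\E L_i^N,
\end{equation*}
where $L_i^N := \frac{1}{2N}\sum_{y' \in \frac{1}{2N}\Z'} \hh(\lfloor 2N\tau_i\rfloor, 2Ny')\, t^{k_i y'}$ is the unshifted analogue, using that $t^{k_i(y - S/(2N))} \cdot t^{k_i S/(2N)} = t^{k_i y}$.

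For the first summand, $t^{k_i S/(2N)} \to 1$ almost surely and by \Cref{thm:unshifted_gff_convergence_intro} the centered vector $(L_i^N - \E L_i^N)_{i=1}^n$ converges jointly in distribution to $\bigl(\tfrac{1}{\sqrt{\pi}} \int \Phi(\eta(\tau_i,y))\, t^{k_i y}\,dy\bigr)_i$; combined with the independence of $S$ from $\vec\lambda$, Slutsky's theorem gives joint convergence of $t^{k_i S/(2N)}(L_i^N - \E L_i^N)$ with $S$ to this GFF limit, which is independent of $S$. For the second summand, \Cref{thm:limit_shape_intro} (equivalently \Cref{thm:moment_asymptotics}) yields $\E L_i^N/(2N) \to \int \cH(y)\, t^{k_i y}\,dy$, while the Taylor expansion $t^{k_i S/(2N)} - 1 = k_i S (\log t)/(2N) + O(S^2/N^2)$ gives
\begin{equation*}
\bigl(t^{k_i S/(2N)} - 1\bigr)\,\E L_i^N \;\longrightarrow\; k_i(\log t)\, S \int \cH(y)\, t^{k_i y}\,dy \;=\; -S \int_{\frac{\log 2}{\log t}}^{\infty} \cH'(y)\, t^{k_i y}\,dy,
\end{equation*}
where the last equality is integration by parts, with vanishing boundary terms since $\cH \equiv 0$ on $(-\infty, \tfrac{\log 2}{\log t}]$ and $\cH(y) t^{k_i y} \to 0$ as $y \to \infty$ (because $\cH(y) \sim y$ while $t^{k_i y}$ decays exponentially). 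This is precisely the discrete-shift component of \eqref{eq:shift_postlimit_vec}.

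The main obstacle is handling this product limit in a joint manner with the GFF piece, since $\E L_i^N$ has order $N$ while $(t^{k_i S/(2N)} - 1)$ has order $1/N$, so the $O(S^2/N^2)$ Taylor remainder contributes $O(S^2/N)$ which must be shown to be $o_P(1)$; the moment hypothesis on $S$ gives $S = O_P(1)$ and more than suffices to control this remainder (in fact any $\Z$-valued $S$ would do at this step, but the hypothesis allows a uniform treatment and is used to handle the dependence on $S$ in all coordinates simultaneously). Once this remainder is controlled, one assembles the joint convergence of the full $n$-vector via Slutsky's theorem, using that the joint limit $\bigl((L_i^\infty)_{i=1}^n, S\bigr)$ has $S$ independent of the GFF integrals, to conclude that $(X_i^N)_{i=1}^n$ converges in distribution to
\begin{equation*}
\left( \int_{\frac{\log 2}{\log t}}^\infty \Bigl(\tfrac{1}{\sqrt{\pi}}\Phi(\eta(\tau_i,y)) - S\cH'(y)\Bigr) t^{k_i y}\,dy \right)_{i=1}^n,
\end{equation*}
as claimed.
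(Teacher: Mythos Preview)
Your argument is correct and takes a genuinely different, more elementary route than the paper's proof. Both start from the same reindexing identity
\[
X_i^N = t^{k_i S/(2N)}(L_i^N - \E L_i^N) + (t^{k_i S/(2N)} - 1)\,\E L_i^N;
\]
the paper further splits the first summand as $B_N^{(i)} + C_N^{(i)}$ with $C_N^{(i)} = (t^{k_i S/(2N)} - 1)(L_i^N - \E L_i^N)$, and then proceeds by computing all joint \emph{cumulants} of $(A_N^{(i)} + B_N^{(i)} + C_N^{(i)})_i$, showing every cumulant containing a $C$ term vanishes, and concluding via the method of moments (this is exactly where the MGF hypothesis on $S$ is used, to ensure the limit is moment-determined). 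You instead keep the product $t^{k_i S/(2N)}(L_i^N - \E L_i^N)$ intact and argue directly: since $(L_i^N - \E L_i^N)_i$ depends only on $\vec\lambda$ and is independent of $S$, while the factors $t^{k_i S/(2N)}$ and $(t^{k_i S/(2N)} - 1)\E L_i^N$ depend only on $S$ and converge almost surely, the joint law of all pieces is a product of two weakly convergent sequences, hence converges, and continuous mapping finishes.

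Two remarks. First, what you invoke is not quite Slutsky's theorem (the auxiliary sequence converges to a \emph{random} limit, not a constant); the correct statement is weak convergence of product measures combined with the continuous mapping theorem, with the independence of $S$ from $\vec\lambda$ doing the real work. Second, your route does not actually use the MGF hypothesis on $S$: since $S$ is a single random variable not depending on $N$, the limits $t^{k_i S/(2N)} \to 1$ and $2N(t^{k_i S/(2N)} - 1) \to k_i S \log t$ hold almost surely with no moment assumption, and any $\Z$-valued $S$ suffices. Your hedging that the hypothesis is needed ``to handle the dependence on $S$ in all coordinates simultaneously'' is unnecessary. The paper's cumulant approach is more in line with the rest of its analysis and produces explicit cumulant asymptotics along the way, but your argument is shorter and proves a slightly stronger statement.
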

\begin{proof}
The proof proceeds by splitting each prelimit random variable in such a way that the joint cumulants can be easily computed, then appealing to results on the unshifted model to show convergence of joint cumulants to the desired limiting random variables.
First note that
\begin{align*}
t^{k_iy}\hh\left(\lfloor 2N\tau_i\rfloor,2Ny-S\right) &= t^{\frac{k_i}{2N}S} \left(t^{k_i\left(y-\frac{S}{2N}\right)} \hh\left(\lfloor 2N\tau_i\rfloor,2Ny - S\right)\right) \\
&= \left(t^{\frac{k_i}{2N}S}-1\right)\left(t^{k_i y'} \hh\left(\lfloor 2N\tau_i\rfloor,2Ny'\right)\right) + \left(t^{k_i y'} \hh\left(\lfloor 2N\tau_i\rfloor,2Ny'\right)\right)
\end{align*}
where $y'=y-\tfrac{S}{2N}$. Use this, together with reindexing sums by $y'$, to write 
\begin{align}
     \frac{1}{2N} \sum_{y \in \frac{1}{2N} \Z'} \left(\hh\left(\lfloor 2N\tau_i\rfloor,2Ny-S\right) - \E[\hh\left(\lfloor 2N\tau_i\rfloor,2Ny\right)]\right) t^{k_iy} = A_N^{(i)} + B_N^{(i)} + C_N^{(i)},
\end{align}
where 
\begin{align}
    A_N^{(i)} &= (t^{\frac{k_i}{2N}S} - 1) \frac{1}{2N}\sum_{y \in \frac{1}{2N}\Z'} t^{k_iy} \E[\hh(\lfloor 2N\tau_i\rfloor,2Ny)]) \\
    B_N^{(i)} &= \frac{1}{2N}\sum_{y \in \frac{1}{2N}\Z'} t^{k_iy} \left(\hh(\floor{2N\tau_i},2Ny)-\E[\hh(\lfloor 2N\tau_i\rfloor,2Ny)])\right)\\
    C_N^{(i)} &= (t^{\frac{k_i}{2N}S}-1)\frac{1}{2N}\sum_{y \in \frac{1}{2N}\Z'} t^{k_iy}\left(\hh(\floor{2N\tau_i},2Ny)-\E[\hh(\lfloor 2N\tau_i\rfloor,2Ny)])\right).
\end{align}
Note that because $S$ is independent of $\hh$, $A_N^{(i)}$ and $B_N^{(j)}$ are independent for any $i,j$. It is a basic fact that any joint cumulant with two independent arguments is $0$, hence 
\[\kappa_m\left(A_N^{(i_1)}+B_N^{(i_1)}+C_N^{(i_1)},\ldots,A_N^{(i_m)}+B_N^{(i_m)}+C_N^{(i_m)}\right) 
= \kappa_m\left(A_N^{(i_1)},\ldots,A_N^{(i_m)}\right) + \kappa_m\left(B_N^{(i_1)},\ldots,B_N^{(i_m)}\right) + (\ldots) \]
where the $(\ldots)$ term consists of a finite (independent of $N$) number of joint cumulants in random variables $A_N^{(i)},B_N^{(i)},C_N^{(i)}$ which contain at least one $C_N^{(i)}$. We wish to show these latter terms are small, and claim that for any $m$ and any $i_1,\ldots, i_m \in \{1,\ldots,n\}$ (not required to be distinct),
\begin{equation}\label{eq:split_cumulants}
    \kappa_m\left(A_N^{(i_1)}+B_N^{(i_1)}+C_N^{(i_1)},\ldots,A_N^{(i_m)}+B_N^{(i_m)}+C_N^{(i_m)}\right) - \kappa_m\left(A_N^{(i_1)},\ldots,A_N^{(i_m)}\right) - \kappa_m\left(B_N^{(i_1)},\ldots,B_N^{(i_m)}\right) \to 0
\end{equation}
as $N \to \infty$. To show this, and to compute the cumulants in the $A$'s and $B$'s later, we rely on the following.

\textbf{Claim 1:} The joint moments (and hence joint cumulants) of $2N(t^{\frac{k_i}{2N}S}-1)$, $i=1,\ldots,n$ converge to those of $(k_i \log t) S$. 

\textbf{Claim 2:} The joint moments and cumulants of $\frac{1}{2N}\sum_{y \in \frac{1}{2N}\Z'} t^{k_iy}\left(\hh(\floor{2N\tau_i},2Ny)-\E[\hh(\lfloor 2N\tau_i\rfloor,2Ny)])\right)$ converge to those of $\int_{\frac{\log 2}{\log t}}^\infty \Phi(\eta(\tau_{i},y))t^{k_iy} dy$.

Claim 1 follows from Taylor expansion and the assumption that the moment generating function of $S$ is defined near $0$. Claim 2 follows since both first moments are clearly $0$, second cumulants $\kappa_2$ converge by \Cref{thm:covariance_asymptotics},  and higher cumulants $\kappa_\ell, \ell \geq 3$ converge by \Cref{thm:cumulant_asymptotics}.

Hence $A_N^{(i)}$ is a product of the $O(1/N)$ random variable $t^{\frac{k_i}{2N}S} - 1$ with the constant 
\[
\frac{1}{2N}\sum_{y \in \frac{1}{2N}\Z'} t^{k_iy} \E[\hh(\lfloor 2N\tau_i\rfloor,2Ny)]) = \E\left[ \frac{1}{2N}\sum_{y \in \frac{1}{2N}\Z'} t^{k_iy} \hh(\lfloor 2N\tau_i\rfloor,2Ny)) \right],
\]
which is $O(N)$ by \Cref{thm:moment_asymptotics}, so $A_N^{(i)}$ is $O(1)$. The random variable $B_N^{(i)}$ is $O(1)$ by Claim 2, while $C_N^{(i)}$ is a product of an $O(1/N)$ random variable and an $O(1)$ random variable by Claims 1 and 2. Therefore any joint moment featuring $A_N, B_N$, and at least one $C_N$ terms is $O(1/N)$. This proves~\eqref{eq:split_cumulants}. The limits of the cumulants $ \kappa_m(B_N^{(i_1)},\ldots,B_N^{(i_m)})$ are immediate from Claim 2, so we compute $\kappa_m(A_N^{(i_1)},\ldots,A_N^{(i_m)})$.

Since
\[
\kappa_m\left(A_N^{(i_1)},\ldots,A_N^{(i_m)}\right) = 
\left(\prod_{j=1}^m \E\left[\frac{1}{2N}\sum_{y \in \frac{1}{2N}\Z'} t^{k_{i_j}y} \frac{\hh(\tau_{i_j}^\sharp,2Ny)}{2N} \right] \right) \kappa_m\left(2N\Bigg(t^{\frac{k_{i_1}}{2N}S}-1\Bigg),\ldots,2N\Bigg(t^{\frac{k_{i_m}}{2N}S}-1\Bigg)\right),
\]
it follows by \Cref{thm:limit_shape} applied to the prefactor and Claim 1 applied to the cumulant that  
\begin{align}\label{eq:A_cumulant_convergence}
\begin{split}
    \kappa_m\left(A_N^{(i_1)},\ldots,A_N^{(i_m)}\right) &\to \left(\prod_{j=1}^m  (k_i \log t) \int\limits_{\frac{\log 2}{\log t}}^\infty  \cH(y)t^{k_iy} dy\right) \kappa_m(S) \\
    &=\kappa_m\left(-S\int\limits_{\frac{\log 2}{\log t}}^\infty \cH'(y) t^{k_{i_1} y} dy,\ldots,-S\int\limits_{\frac{\log 2}{\log t}}^\infty \cH'(y)t^{k_{i_m} y}  dy \right)
\end{split}
\end{align}
where we use integration by parts for the equality, and the boundary terms vanish by the explicit description of $\cH$ in \Cref{thm:limit_shape}.

Combining~\eqref{eq:split_cumulants} with these two computations yields 
\begin{align*}
    &\kappa_m\left(A_N^{(i_1)}+B_N^{(i_1)}+C_N^{(i_1)},\ldots,A_N^{(i_m)}+B_N^{(i_m)}+C_N^{(i_m)}\right) \to \\
    &\kappa_m\left(\frac{1}{\sqrt{\pi}} \int\limits_{\frac{\log 2}{\log t}}^\infty \Phi(\eta(\tau_{i_1},y))t^{k_{i_1}}dy,\ldots, \frac{1}{\sqrt{\pi}} \int\limits_{\frac{\log 2}{\log t}}^\infty \Phi(\eta(\tau_{i_m},y)) t^{k_{i_m}y}dy \right)\\
    &+ \kappa_m\left(-S\int\limits_{\frac{\log 2}{\log t}}^\infty  \cH'(y)t^{k_{i_1} y} dy,\ldots,-S\int\limits_{\frac{\log 2}{\log t}}^\infty  \cH'(y)t^{k_{i_m} y} dy \right) \\
    &= \kappa_m\left( \int\limits_{\frac{\log 2}{\log t}}^\infty \left( \frac{1}{\sqrt{\pi}}\Phi(\eta(\tau_{i_1},y))-S \cH'(y)\right) t^{k_{i_1}y}dy, \ldots, \int\limits_{\frac{\log 2}{\log t}}^\infty \left( \frac{1}{\sqrt{\pi}} \Phi(\eta(\tau_{i_m},y)) - S \cH'(y) \right)t^{k_{i_m} y} dy \right)
\end{align*}
so the joint cumulants of the random vector~\eqref{eq:shift_prelimit_vec} converge to those of~\eqref{eq:shift_postlimit_vec}. By the moment-cumulant relations all moments converge as well, and it is also clear that all joint moments of $A_N^{(i)}+B_N^{(i)}+C_N^{(i)}$ exist for finite $N$. Since the moment generating function of $S$ exists in a neighborhood of $0$, $S$ is determined by its moments, and since Gaussians are also determined by their moments, the random vector
\[
\left(\int\limits_{\frac{\log 2}{\log t}}^\infty \left( \frac{1}{\sqrt{\pi}} \Phi(\eta(\tau_{i_1},y)) -S \cH'(y) \right) t^{k_{i_1} y} dy , \ldots, \int\limits_{\frac{\log 2}{\log t}}^\infty \left( \frac{1}{\sqrt{\pi}} \Phi(\eta(\tau_{i_m},y)) -S \cH'(y) \right) t^{k_{i_m} y} dy \right)
\]
is uniquely determined by its (joint) moments. Hence convergence of moments implies weak convergence, completing the proof.
\end{proof}

\begin{proof}[Proof of {\Cref{thm:shifted_gff_intro}}]
By definition of the shift-mixed periodic Schur process we have equality in distribution
\[
\hs(\floor{2N\tau},2Ny) = \hh(\floor{2N\tau}, 2Ny - S)
\]
where $\hs$ is the random height function of the shift-mixed $q^{\mvol}$ measure with parameter $u$, $\hh$ is the random height function of the $q^{\mvol}$ measure, and $S$ is the discrete Gaussian given by 
\[
\Pr(S=k) = \frac{u^k t^{k^2/2}}{\theta_3(u;t)}
\]
with $t=q^N$. The moment generating function of $S$ is 
\[
f_S(a) = \E[e^{aS}] = \frac{\theta_3(ue^a;t)}{\theta_3(u;t)},
\]
which is clearly defined for $a$ in a neighborhood of $0$, so \Cref{thm:shifted_gff_intro} follows from \Cref{thm:shift-mixed_fluctuations}.
\end{proof}

\section{Matching with conjectures on height fluctuations}\label{sec:conf_str}

In this section we discuss the relation of our results to conjectures in the literature concerning fluctuations of height functions in tiling models. In \Cref{subsec:KO} we check that the conformal structure of the Gaussian free field in \Cref{thm:unshifted_gff_convergence_intro} is the one predicted by the Kenyon-Okounkov conjecture. In \Cref{sec:hol_hex} we show that the discrete component of the fluctuations in \Cref{thm:shifted_gff_intro} agrees with a conjecture of~\cite{gorin_2021} regarding height fluctuations in multiply connected planar domains.

\subsection{Kenyon--Okounkov conjecture}\label{subsec:KO}

In~\cite{KO07}, Kenyon and Okounkov conjecture that the height fluctuations of biperiodic bipartite dimer models on a simply connected domain with volume constraints are given by the Gaussian free field on the liquid region in some conformal structure which they also describe. We now give a more precise statement of their conjecture in the setting of the~$q^{\mvol}$ measure, based on~\cite[Sections 9.3 and 11.3]{gorin_2021}.

We note that the height function we defined for the cylinder is uniquely determined by (A) local relations: all increments of the height function are determined by the presence/absence of lozenges at those locations and (B) boundary conditions: the height function $h(\tau,y)$ is $0$ for all sufficiently negative $y$. Given a general tileable simply connected domain on the triangular lattice, one may define a height function similarly by fixing its value at a point and mandating that its local increments are the same as ours\footnote{See e.g.~\cite[Section 1]{gorin_2021} for a more extended discussion, although note that the height function there differs from ours by an affine transformation.}. 
Suppose $D_L$ is a sequence (in $L$) of simply connected tileable domains on the triangular lattice such that its boundary approximates that of some domain $D$ in $\R^2$ with continuous boundary, $\tfrac{1}{L} \partial D_L \to \partial D$. We can consider lozenge tilings in $D_L$ and the measure $q^{\mvol}$ on $D_L$, where $q = e^{c/L}$ depends on $L$ and a fixed parameter\footnote{If one considers tilings of an infinite domain, one must take $c \in \R_{<0}$.} $c \in \R$. We will use $\tau$ and $y$ for the horizontal and vertical coordinates on $\R^2$, for consistency with our previous coordinates on the cylinder. However, the conformal structure of~\cite{KO07} is in terms of coordinates where the axes are parallel to the sides of the $\rloz$ lozenge, whereas our initial choice of coordinates reflected the rotational invariance of our model. Hence we introduce new coordinates $\wh{\tau},\wh{y}$, for which the point $(\wh{\tau},\wh{y})$ corresponds to $(\tau,y + \tfrac{1}{2}\tau)$. In the case of the cylinder the $\wh{\tau},\wh{y}$ coordinate directions are shown in \Cref{fig:KO_discussion}.

It is known~\cite{KOS06} that the height function $h_L$ of such a random tiling has a limit shape, which can be described in local proportions $p_{\hloz}, p_{\lloz}, p_{\rloz}$ of lozenges. More precisely, by~\cite[Theorem 1]{KO07} there exists a function $(\wh{\tau},\wh{y}) \mapsto z(\wh{\tau},\wh{y})$, called the \emph{complex slope}, on the liquid region (which is the subset of $D$ for which $p_{\hloz}, p_{\lloz}, p_{\rloz}$ are all positive as in \Cref{def:liquid}) such that
\begin{align}\label{eq:slopes_and_z}
p_{\hloz} = \frac{\arg z}{\pi}, \quad \quad p_{\rloz} = -\frac{\arg(1 - z)}{\pi}, \quad \quad p_{\lloz} = 1 - p_{\rloz} - p_{\hloz}.
\end{align}
Furthermore,~\cite[Corollary 1]{KO07} implies that there exists an analytic function $Q$ such that
\begin{equation}\label{eq:Q}
    Q(e^{-c\wh{\tau}} z, e^{-c\wh{y}}(1 - z)) = 0.
\end{equation}
By~\cite[Section 1.5]{KO07}, $z,1 - z$ satisfy a differential equation related to the complex Burgers equation, and the analytic relation above follows from solving this equation by complex characteristics. We now state the conjecture of Kenyon and Okounkov~\cite[p15]{KO07}, which we formulate based on the presentation in~\cite[Conjecture 11.1]{gorin_2021}.

\begin{conjecture}\label{conj:KO}
The fluctuations of the height function on the liquid region converge as $L \to \infty$ to the Gaussian free field on the upper half plane with $0$-Dirichlet boundary conditions,
\[ \sqrt{\pi} \left( h_L(\tfrac{\wh{\tau}}{L},\tfrac{\wh{y}}{L}) - \E[h_L(\tfrac{\wh{\tau}}{L},\tfrac{\wh{y}}{L})] \right) \to \mathrm{GFF} \circ \zeta, \]
where the conformal structure of the liquid region is induced by $\zeta = e^{-c\wh{\tau}} z$ and $z$ is the complex slope.
\end{conjecture}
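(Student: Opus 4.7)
The plan is to combine Schur-process moment formulas for a sufficiently rich family of ``tractable'' boundary conditions with a stability/approximation argument to recover the general simply connected domain $D$. The rough idea is to realize the $q^{\mvol}$ measure on $D_L$ either directly as an ordinary Schur process with carefully chosen specializations, or as a limit of such processes under a boundary-approximation procedure, and then transport the moment-based asymptotic analysis of \Cref{sec:moment_formula,sec:asymptotics} to this simply connected setting.

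First I would prove the conjecture for a distinguished class of domains -- for instance, trapezoidal/sawtooth domains whose boundaries are built out of segments in the three lozenge-compatible directions. For such domains the $q^{\mvol}$ measure can be written as an ordinary (non-periodic) Schur process with alpha specializations of the form $q^{k}$, via the classical identification of sequences of interlacing partitions with lozenge tilings. The same derivation as in \Cref{thm:shifted_observable_formula,thm:observable_formula,thm:analytic_r_moments} -- with the elliptic kernel of the shift-mixed case replaced by the rational Cauchy kernel $1/(1 - z_i/z_j)$, since no periodicity is present -- yields joint contour-integral formulas for $\E[\prod_i \cF_{r_i}(\lambda^{(\tau_i)})]$ in terms of a function $F(\tau,z)$ dictated by the boundary specializations.

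Second, I would carry out the asymptotic analysis in parallel with \Cref{sec:asymptotics}. A steepest-descent argument on the one-point integral produces the limit shape and identifies the critical point $z(\wh\tau,\wh y)\in\HH$ as the complex slope satisfying \eqref{eq:Q} and \eqref{eq:slopes_and_z}, by uniqueness from~\cite{KO07}. The second-order expansion of the two-point integral, computed as in \Cref{thm:covariance_asymptotics} but with the rational kernel $-\log(1 - z_1/z_2)$ in place of $\log\Theta$, yields a limiting covariance of the form
\[
-\frac{1}{4\pi^2}\int\!\!\int \log\left|\frac{z(\wh\tau_1,\wh y_1) - z(\wh\tau_2,\wh y_2)}{z(\wh\tau_1,\wh y_1) - \overline{z(\wh\tau_2,\wh y_2)}}\right|\, \varphi_1\, \varphi_2\, d\wh\tau_1\, d\wh y_1\, d\wh\tau_2\, d\wh y_2 ,
\]
after which the change of variables $\zeta = e^{-c\wh\tau}z$ identifies the covariance with the pullback through $\zeta$ of the upper half-plane Green's function (the verification is the simply connected analogue of the computation performed for our cylinder in \Cref{sec:conf_str}). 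Higher joint cumulants vanish by the same graph-theoretic bookkeeping as \Cref{thm:cumulant_asymptotics}, because each connected graph gives a contribution of subleading order.

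Third -- and this is where the serious difficulty lies -- I would extend from the tractable subclass to arbitrary simply connected $D$ with continuous boundary. A direct approximation approach requires showing that the Gaussian fluctuations depend continuously on $\partial D$ in a sense compatible with Gaussian free field convergence: one can hope to combine compactness of the limit-shape variational problem from~\cite{CKP01,KOS06} with moment bounds uniform in the approximating sequence, but the obstacle is that $q^{\mvol}$ tilings of distinct domains are not coupled by any obvious monotone relation, so no simple sandwiching argument is available. The alternative is to invoke the $t$-embedding framework of~\cite{CLR20,chelkak2021bipartite}: construct $t$-embeddings adapted to the $q^{\mvol}$ edge weights, prove their conformal convergence to the conformal structure induced by $\zeta$, and then apply the general Gaussian free field convergence theorem in that framework. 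The hardest part of the whole program is precisely this third step: bridging the algebraic tractability of the Schur-process regime with a robust discrete complex-analytic tool that is insensitive to the shape of the boundary, and in particular establishing the requisite conformal convergence of the embeddings for general $q^{\mvol}$ boundaries.
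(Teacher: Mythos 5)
The statement you are attempting to prove is stated in the paper as a \emph{conjecture} (due to Kenyon and Okounkov~\cite{KO07}, in the formulation of~\cite{gorin_2021}); the paper offers no proof of it and does not claim one. What the paper actually does in \Cref{sec:conf_str} is far more modest: it verifies that the conformal structure appearing in its own \Cref{thm:unshifted_gff_convergence_intro} --- proved only for the single, highly symmetric case of the $q^{\mvol}$ measure on the infinite cylinder, via periodic Schur process moment formulas --- agrees with the one the conjecture predicts, i.e.\ it checks the two items (i) and (ii) listed there for the explicit complex slope $z$ satisfying $(1-z^{-1})(1-z)=t^{2y}$. So there is no ``paper's proof'' to compare against, and your proposal has to be judged as a standalone proof of the general conjecture.

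Judged that way, it has a genuine gap, and it is the one you yourself flag: the third step. Steps one and two are essentially a rerun of the known Schur-process program (this is what~\cite{OR07,BMRT12,Ahn20} and the present paper carry out for sawtooth, skew, and cylindric boundaries), and for those special domains the sketch is plausible. But the passage from that very restricted class of boundaries to an arbitrary simply connected $D$ with continuous boundary is not an approximation lemma waiting to be written down --- it is the open content of the conjecture. You would need either (a) a quantitative stability statement saying that the Gaussian fluctuations, \emph{including the limiting conformal structure}, vary continuously under perturbation of $\partial D$, for which no mechanism is known (as you note, there is no monotone coupling between $q^{\mvol}$ measures on different domains, and moment bounds uniform in an approximating sequence do not by themselves identify the limit of the covariance); or (b) the construction and conformal convergence of $t$-embeddings adapted to $q^{\mvol}$ weights on general domains, which is itself open and is precisely the hypothesis, not a consequence, of the theorems in~\cite{CLR20,chelkak2021bipartite}. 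As written, your step three records the difficulty rather than resolving it, so the proposal does not constitute a proof.
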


\begin{remark}\label{rmk:us_vs_vadim}
\Cref{conj:KO} has been translated to match our notation, so we give a dictionary between it and the original version~\cite[Conjecture 11.1]{gorin_2021}. First, the coordinate directions which we denote by $\wh{\tau},\wh{y}$ are denoted $x,y$ respectively in~\cite{gorin_2021}. The height function denoted $H$ in~\cite[Conjecture 11.1]{gorin_2021}, is related to our height function $h$ (up to choice of constant shifts, which do not affect fluctuations) by 
\[h(\wh{\tau},\wh{y}) = \wh{y} - H(\wh{\tau},\wh{y}). \]
The complex slope we denote by $z$ in \Cref{conj:KO} is actually $1-\bar{z}$ in the notation of~\cite{gorin_2021}. Hence the conformal structure given by our $\zeta$ above corresponds to the conformal structure given by $e^{-c x}(1-\bar{z})$ in the notation of~\cite{gorin_2021}. Note that the Gaussian free field in the latter conformal structure is the same as the one in the conjugate conformal structure $e^{-c x}(1-z)$ (still in the notation of~\cite{gorin_2021}), because the Laplacian commutes with complex conjugation and we are viewing both as distributions on the space of real-valued test functions. It follows from~\cite[Theorem 10.1]{gorin_2021} and the implicit function theorem that this conformal structure is the same as the one given by $e^{-c y}z$, as is also noted in~\cite[Section 2.3]{KO07}. This latter conformal structure is the one used in~\cite[Conjecture 11.1]{gorin_2021}.
\end{remark}

\begin{figure}[ht]
    \centering
    \includegraphics[scale=0.4]{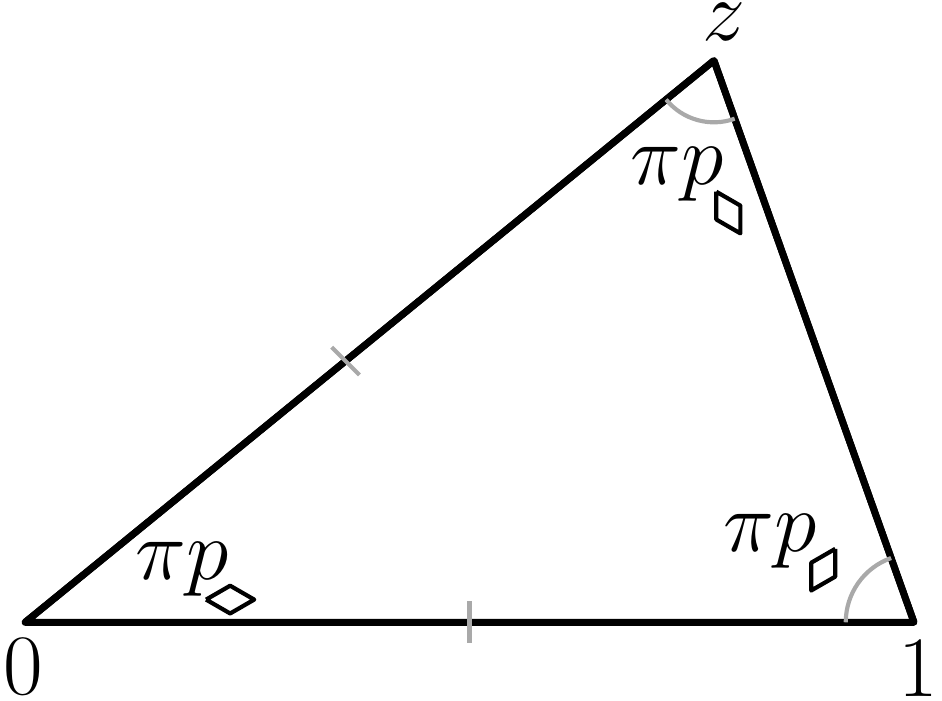}
    $\quad\quad\quad\quad\quad\quad$
    \includegraphics[scale=0.4]{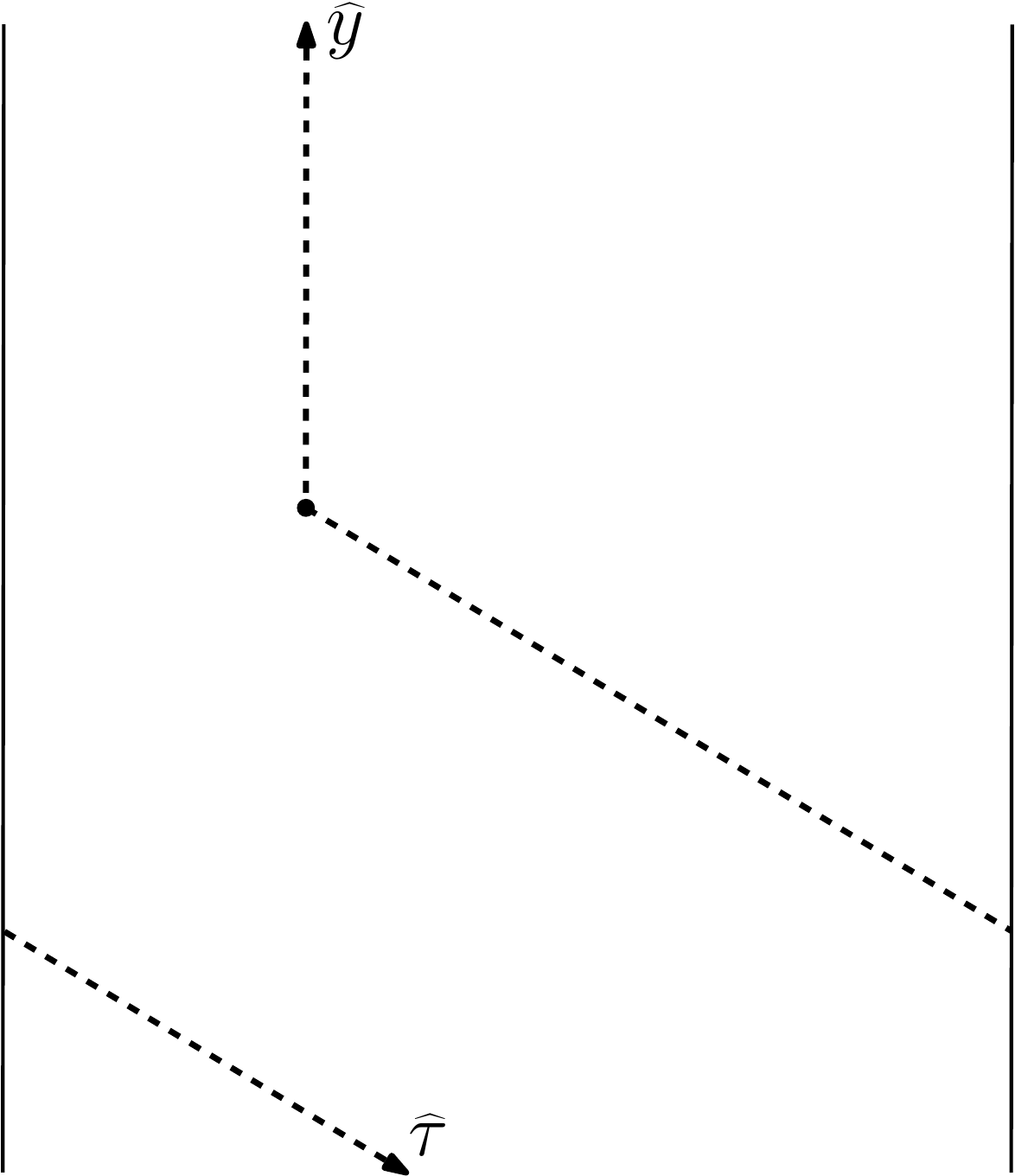}
    \caption{The isosceles triangle formed by $0,1,z$ where $z$ is the complex slope for the limiting unshifted $q^{\mvol}$ on the infinite cylinder (left); 
the $(\wh{\tau},\wh{y})$-coordinate axes (right).
} \label{fig:KO_discussion}
\end{figure}

The conjecture translates straightforwardly to our setting of the unshifted $q^{\mvol}$ measure on the infinite cylinder, and our main results verify it in this setting. We translate our results here to make the comparison with \Cref{conj:KO} immediate. In our setup the system size $L = 2N$ and the height function is indexed by $N$ instead of $L$, so $\hh$ corresponds to the system of size $2N$. Note also that our $q = t^{\frac{1}{2N}}$, therefore $c = \log t$. To verify \Cref{conj:KO} for the cylinder, it is enough to check that:

\begin{enumerate}[(i)]
    \item There is an explicitly given complex slope $z$, determined by~\eqref{eq:slopes_and_z}, which satisfies an equation of the form~\eqref{eq:Q} for some analytic function $Q$.
    
    \item The fluctuations of the height function are given by the $\zeta$-pullback of the Gaussian free field $\Phi$ on the cylinder with $0$-Dirichlet boundary conditions as in \Cref{thm:unshifted_gff_convergence_intro}, 
    \[ \sqrt{\pi} \left( \hh\left(\tfrac{\wh{\tau}}{L}, \tfrac{\wh{y}}{L}\right) - \E\left[\hh\left(\tfrac{\wh{\tau}}{L}, \tfrac{\wh{y}}{L}\right)\right] \right) \to \Phi \circ \zeta \]
    where $\zeta = e^{-c\wh{\tau}}z$.
\end{enumerate}

We first obtain the complex slope. \Cref{thm:complex_coordinate} implies
\[ \cH'(y) = \frac{\pi - \arg z}{\pi} = 1 - p_{\hloz} \]
where $z:= z(\tau,y)$ satisfies
\begin{align} \label{eq:z_equation}
(1 - z^{-1})(1 - z) = t^{2y}.
\end{align}
Since $p_{\rloz} = p_{\lloz}$ for the $q^{\mvol}$ measure on the cylinder, and it follows from~\eqref{eq:z_equation} that $|z| = 1$, the triangle formed by $0,1,z$ is isosceles with the distinguished angle at the vertex $0$, see \Cref{fig:KO_discussion}. Thus 
\[ p_{\hloz} = \frac{\arg z}{\pi}, \quad \quad p_{\lloz} = p_{\rloz} = -\frac{\arg(1 - z)}{\pi}. \]

Given these local proportions and the complex slope $z$, we may check that the prescribed conformal structure of \Cref{conj:KO} matches the conformal structure in our fluctuations result \Cref{thm:unshifted_gff_convergence_intro}. We must change coordinates to $(\wh{\tau},\wh{y})$ by
\begin{align} \label{eq:complex_coordinate_change}
(\tau,y) \mapsto (\wh{\tau},\wh{y}) = (\tau,y + \tfrac{1}{2}\tau)
\end{align}
as above. Recall from \Cref{fig:shift_mixed} that the curves corresponding to a fixed $y^\sharp$-coordinate in the $(\tau^\sharp,y^\sharp)$ coordinate system are jagged paths. To obtain prelimit coordinates which will limit to $(\wh{\tau},\wh{y})$, we must transform the coordinates based on parity:
\[ (\tau^\sharp,y^\sharp) \mapsto (\wh{\tau}^\sharp, \wh{y}^\sharp) = (\tau^\sharp,y^\sharp + \floor{ \tfrac{1}{2}\tau^\sharp}). \]
Taking the $L \to \infty$ limit and rescaling the domain $(\tau,y) = \tfrac{1}{L}(\tau^\sharp,y^\sharp)$ gives us the coordinate change~\eqref{eq:complex_coordinate_change}.

In terms of these new coordinates on the liquid region, we can describe our map $\eta$ in terms of $(\wh{\tau},\wh{y})$ as follows: let $\zeta$ be the solution to
\begin{align} \label{eq:zeta_equation}
(1 - t^{-\wh{\tau}} \zeta^{-1}) (1 - t^{\wh{\tau}} \zeta) = t^{2\wh{y} - \wh{\tau}},
\end{align}
which is just~\eqref{eq:zeta_defining} in the new coordinates, and note that $\eta = \tfrac{1}{2\pi\bi} \log t^{2\wh{\tau}} \zeta$ is just our original $\eta$ (\Cref{def:eta}) in these new coordinates. 
Equation~\eqref{eq:z_equation} implies that
\[ \zeta = t^{\wh{\tau}}z. \]
Moreover, note that~\eqref{eq:zeta_equation} can be rewritten as
\[ Q\left(t^{-\wh{\tau}}z,t^{-\wh{y}} (1 - z)\right) := \left( t^{-\wh{y}} (1 - z) \right)^2 + t^{-\wh{\tau}}z = 0 \]
which verifies (i).

Therefore, \Cref{thm:unshifted_gff_convergence_intro} becomes the statement that
\[ \hh(\lfloor 2N \tau \rfloor,y) - \E \hh(\lfloor 2N \tau \rfloor,y) \]
converges to the pullback of the Gaussian free field on the cylinder to our liquid region by the map $\eta$. The covariance kernel is given by
\[ \cK((\wh{\tau}_1,\wh{y}_1),(\wh{\tau}_2,\wh{y}_2)) = -\frac{1}{2\pi} \log \left| \frac{\Theta(\eta_1 - \eta_2|\omega)}{\Theta(\eta_1 + \bar{\eta}_2|\omega)} \right| = -\frac{1}{2\pi} \log \left| \frac{\theta_1(\zeta_1/\zeta_2;t)}{\theta_1(\zeta_1/\overline{\zeta}_2;t)} \right| \]
where $\zeta_i = \zeta(\wh{\tau}_i,\wh{y}_i)$ and $\eta_i = \eta(\wh{\tau}_i,\wh{y}_i)$ for $i = 1,2$. We note that the complex coordinate $\eta$ in which we have stated our results is not the same as the coordinate $\zeta$ in the Kenyon-Okounkov conjecture, though they are clearly conformally equivalent. This shows (ii), verifying the Kenyon-Okounkov conjecture in our setting.

\subsection{Hole height fluctuations for multiply connected domains}\label{sec:hol_hex}

As mentioned in the Introduction, there is also a general conjecture concerning the height fluctuations of macroscopic holes in multiply connected planar domains. In this subsection we state the conjecture and discuss how our results verify its natural extension to the cylinder.

There is a fruitful analogy between cylindric partitions, which are lozenge tilings of the cylinder, and lozenge tilings of multiply connected planar domains. The paradigmatic example of the latter is the \emph{holey hexagon}, shown in \Cref{fig:hol_hex}.
To define the height function on planar domains, the height at the outer boundary is fixed as usual, but the height along the inner boundary may vary with the tiling as shown. In this example, the height function is constant along the boundary of the hole, but in general for holes not of shape $\hloz$ the height function may vary along the perimeter of the hole. However, the increments of the height function along the boundary of the hole are independent of the tiling, so one may speak of the `height of a hole' by fixing a point on it and considering the height at that point. Since for height fluctuations the choice of this point does not matter, we will abuse terminology in referring to the height of a hole for general multiply connected domains below.

\begin{figure}[ht]
    \centering
    \includegraphics[scale=0.2]{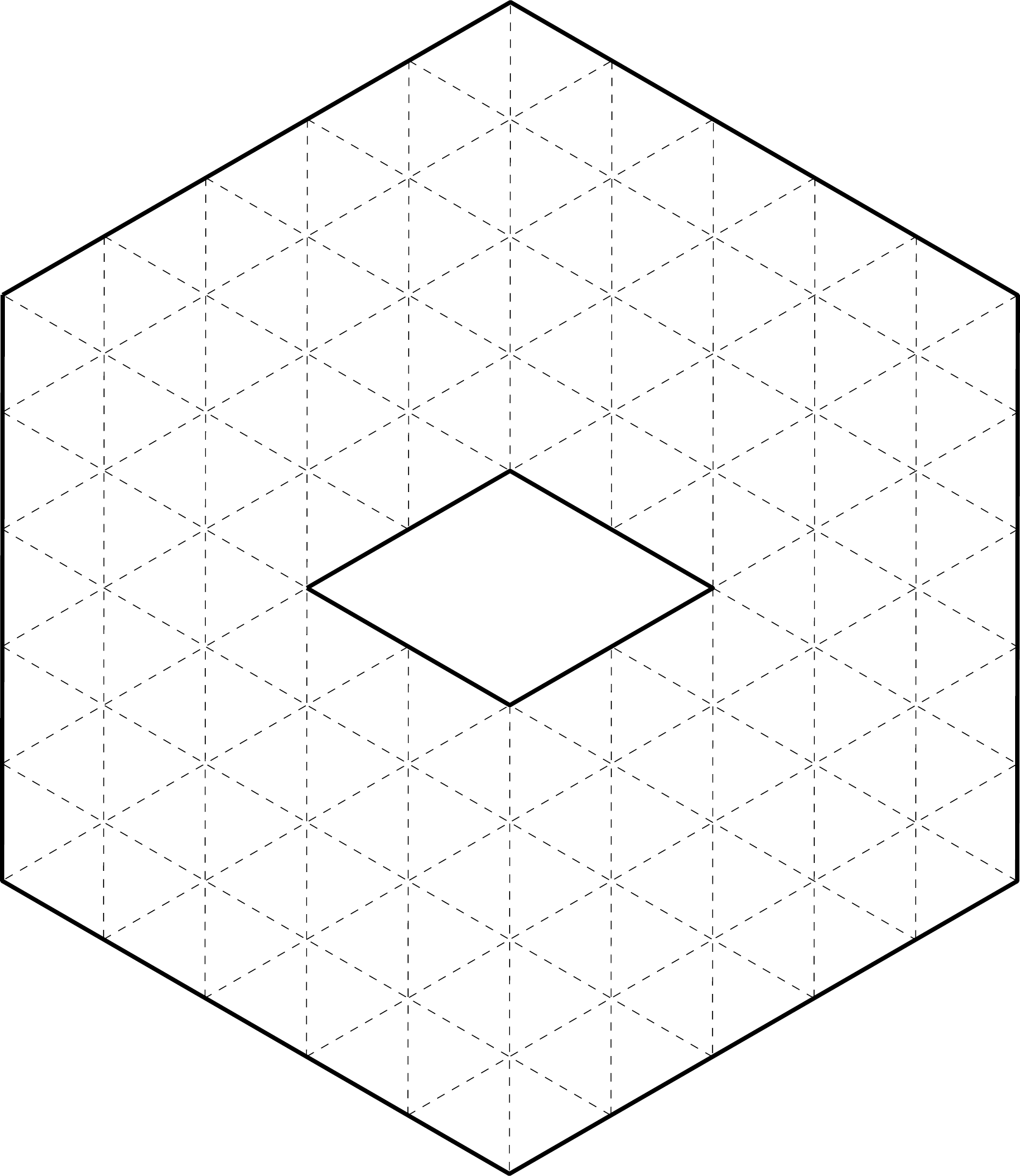}
    $\quad\quad\quad\quad\quad\quad$
    \includegraphics[scale=0.2]{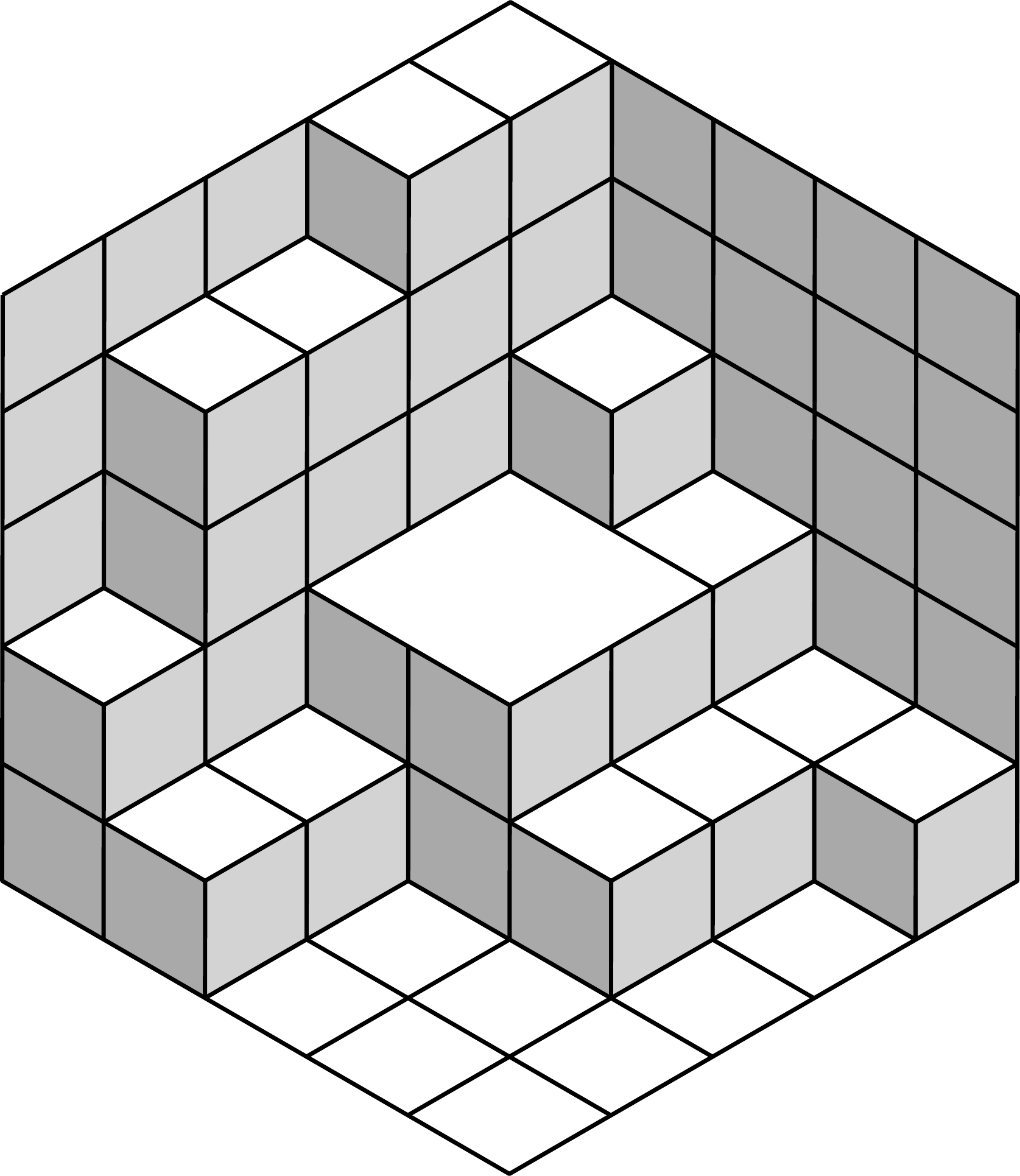}
    $\quad\quad\quad\quad\quad\quad$
     \includegraphics[scale=0.2]{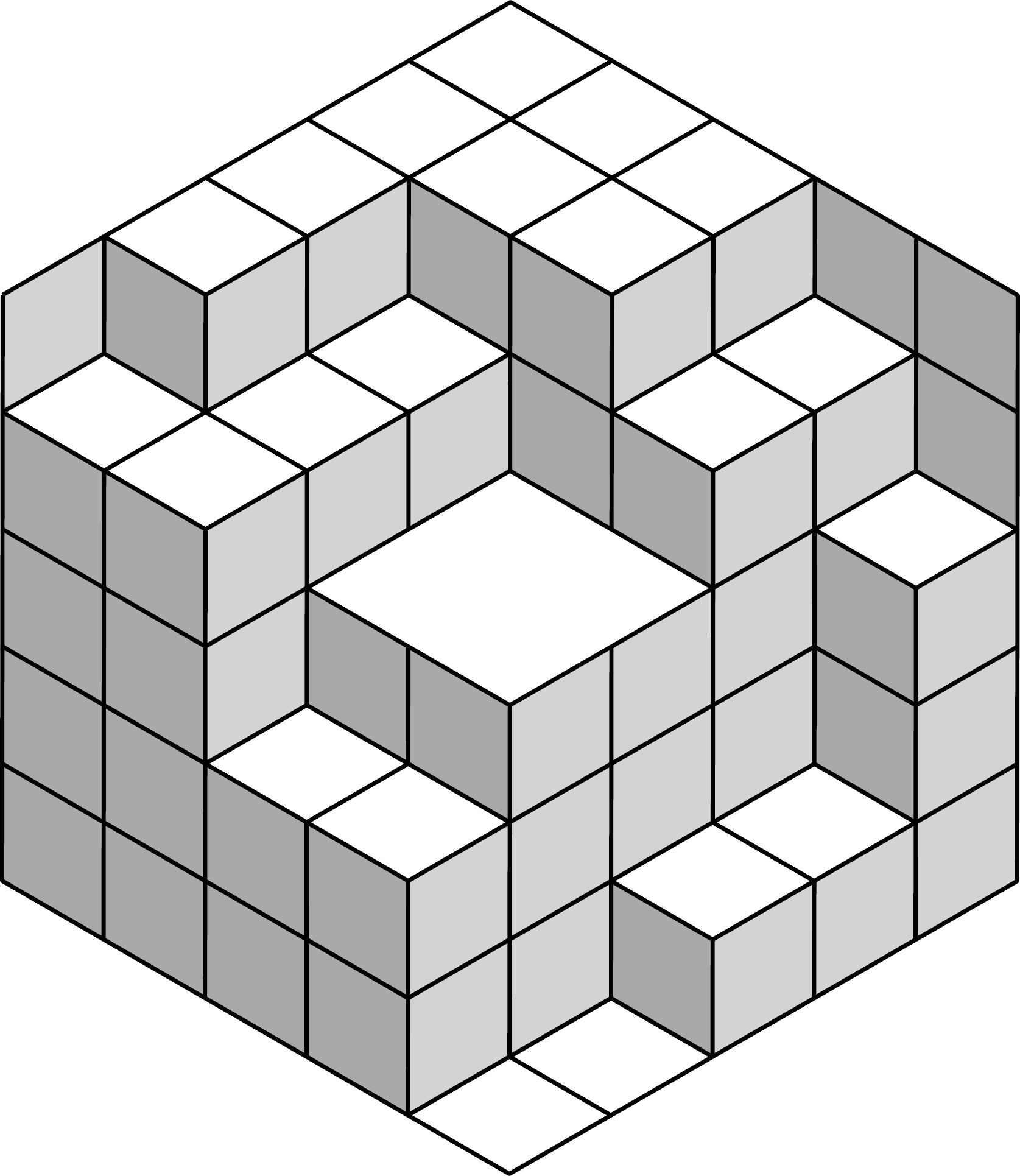}
    \caption{The holey hexagon domain (left). If the height at the bottom of the domain is taken to be $0$, then the height of the hole for the two tilings (center) and (right) are~$2$ and~$3$ respectively.}
    \label{fig:hol_hex}
\end{figure}

The fact that the height of the hole may vary leads to two reasonable notions of random tilings on such a domain, cf.~\cite[Section 24]{gorin_2021}: one may either (A) consider a random tiling among all tilings with probabilities given by some dimer weights, for example the uniform or $q^{\mvol}$ weights, or (B) fix some $H_0 \in \Z$ and condition the previous probability measure to live on those tilings for which the hole has height $H_0$.

In (A), the measure is a dimer model, and the positions of lozenges have a determinantal structure~\cite{Ken01}, while fixing the height at the hole as in (B) destroys this structure. Fixing the height of the hole in this setting is exactly analogous to fixing the shift $S$ of a cylindric partition, and as we saw the unshifted periodic Schur process similarly loses the determinantal structure of the shift-mixed one. This suggests to compare
\begin{align*}
\text{unshifted cylindric partitions} \quad \quad &\leftrightarrow \quad \quad \text{tilings of hexagon with hole height fixed}\\
\text{shift-mixed cylindric partitions} \quad \quad &\leftrightarrow \quad \quad \text{tilings of hexagon with hole height unrestricted}.
\end{align*}
This analogy also holds at the level of the limit objects. The height fluctuations of the holey hexagon with hole height fixed were shown to converge to the Gaussian free field in~\cite{BuG19}, analogous to our \Cref{thm:unshifted_gff_convergence_intro} for the unshifted $q^{\mvol}$ measure.

For tilings with the hole height unrestricted, which in the above analogy correspond to the shift-mixed model, there is a precise conjecture concerning the height of the hole of a random tiling. The setup is similar to the previous subsection: suppose $D_L$ is a sequence (in $L$) of tileable domains on the triangular lattice such that $\tfrac{1}{L} \partial D_L \to \partial D$ where $D$ is some multiply connected domain in $\R^2$ with continuous boundary. We can consider lozenge tilings in $D_L$ and define the uniform measure and the measure $q^{\mvol}$ on $D_L$, where $q = e^{c/L}$ depends on $L$ and a fixed parameter $c \in \R$. The following conjecture was originally stated for uniform tilings, but generalizes readily to $q^{\mvol}$.

\begin{conjecture}[{\cite[Conjecture 24.1]{gorin_2021}}]\label{conj:hole}
Let $D$ be a bounded domain in $\R^2$ with a single hole, $D_L$ be as above, and $H_{\operatorname{hole}}^{(L)}$
be the height of the hole of a $q^{\mvol}$ random tiling of $D_L$. Then there is a sequence of integer shifts $\mu_L$ such that $H_{\operatorname{hole}}^{(L)} - \mu_L$ converges in distribution to a discrete Gaussian $\cN_{\operatorname{discrete}}(m,C)$ (see \Cref{def:disc_gauss}), where $m \in [0,1)$ is some constant and $C$ is given by the Dirichlet energy
\[
C = \frac{\pi}{2} \int_{\zeta(\sL)}  \|\nabla g\|^2 \, dx \, dy.
\]
Here $\sL$ is the liquid region, $\zeta: \sL \to \C$ is the complex coordinate of the GFF as in \Cref{conj:KO}, and $g$ is the unique harmonic function on $\zeta(\sL)$ which is $0$ on the outer boundary and $1$ on the inner boundary. 
\end{conjecture}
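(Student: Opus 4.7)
The statement to verify is the prediction of \Cref{conj:hole} specialized to our cylinder geometry: that the variance parameter $C$ obtained from the conjecture's Dirichlet-energy formula equals the value $|\log t|/2$ appearing in \Cref{thm:shifted_gff_intro}. Since \Cref{thm:shifted_gff_intro} already identifies the shift as $\cN_{\operatorname{discrete}}(\log u/\log t, \, |\log t|/2)$, the task reduces to an explicit Dirichlet-energy computation on (a conformal representative of) the liquid region $\sL$ and a comparison with $|\log t|/2$.

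The plan is to use conformal invariance of the Dirichlet integral to replace the Kenyon--Okounkov complex slope coordinate $\zeta$ with the coordinate $\eta:\sL \to \cC$ of \Cref{def:eta}, which is simpler; moreover $\zeta$ is multi-valued on the annular domain $\sL$, whereas $\eta$ is a single-valued conformal equivalence onto $\cC$. That these two coordinates describe the same conformal structure on $\sL$ is the content of \Cref{subsec:KO}. First I would identify the two boundary components of $\cC$ as images of the natural ``sides'' of $\sL$: direct computation from \eqref{eq:eta_intro} shows that as $y \to +\infty$ the image $\eta(\tau,y)$ tends to $\Re \eta = 0$, while as $y \downarrow \frac{\log 2}{\log t}$ (the liquid-frozen interface, where $t^{2y} = 4$) it tends to $\Re \eta = 1/2$. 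The latter arises from the wall--floor interface which is ``free to move'' under the shift $S$, so it plays the role of the inner hole boundary in the analogy with multiply connected planar domains; by the symmetry $g \mapsto 1 - g$ of the Dirichlet problem, the assignment of inner versus outer does not affect $C$.

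On $\cC = (0,\tfrac12)\times \R/\tfrac{|\log t|}{2\pi}\Z$ the unique harmonic function with $g \equiv 0$ on $\{\Re \eta = 0\}$ and $g \equiv 1$ on $\{\Re \eta = 1/2\}$ is
\[
g(\eta) = 2\Re(\eta), \qquad |\nabla g|^2 \equiv 4.
\]
The fundamental domain of $\cC$ has Euclidean area $\tfrac{1}{2}\cdot \tfrac{|\log t|}{2\pi} = \tfrac{|\log t|}{4\pi}$, hence
\[
\int_{\cC} |\nabla g|^2\, dx\, dy \;=\; 4 \cdot \frac{|\log t|}{4\pi} \;=\; \frac{|\log t|}{\pi}, \qquad C_{\operatorname{predicted}} \;=\; \frac{\pi}{2}\cdot \frac{|\log t|}{\pi} \;=\; \frac{|\log t|}{2},
\]
which matches \Cref{thm:shifted_gff_intro} exactly.

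There is essentially no hard step: once the conformal identification $\sL \cong \cC$ is in hand the harmonic function is linear and the area computation is trivial. The only substantive points are the invocation of conformal invariance of the Dirichlet energy (which lets us ignore that $\zeta$ and $\eta$ differ by a transcendental change of variables and that $\zeta$ is multi-valued on $\sL$) and the boundary identification for $\eta$ described above; both follow from elementary manipulation of \eqref{eq:eta_intro}.
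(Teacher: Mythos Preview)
Your proposal is correct and takes essentially the same approach as the paper: both use conformal invariance of the Dirichlet energy to pass from the Kenyon--Okounkov coordinate $\zeta$ to the cylinder coordinate $\eta$, identify the harmonic function as $g=2\Re\eta$, and compute $\tfrac{\pi}{2}\cdot 4\cdot\operatorname{area}(\cC)=\tfrac{|\log t|}{2}$. Your additional discussion of which boundary component corresponds to the ``hole'' is slightly more detailed than the paper's (which simply takes $g(x,y)=2x$ without explicit justification of the inner/outer labeling), and your observation that the $g\mapsto 1-g$ symmetry makes this choice immaterial is a nice safeguard.
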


A heuristic argument for this conjecture is given in~\cite[Section 24]{gorin_2021}, a proof for certain domains is expected to appear in~\cite{borot2021unpublished}, and a generalization of the conjecture to domains with multiple holes is given in~\cite[Conjecture 24.2]{gorin_2021}. 

In our case, the liquid region is the cylinder $(0,\tfrac{1}{2}) \times \R/\tfrac{|\log t|}{2\pi}\Z$. Identifying it with its fundamental domain, the function $g(x,y) = 2x$ is the unique harmonic function which is $0$ on the left boundary and $1$ on the other. The Dirichlet energy of this function is
\begin{equation}\label{eq:dirichlet_comp}
\frac{\pi}{2} \int_{(0,\tfrac{1}{2}) \times \R/\tfrac{|\log t|}{2\pi}\Z} \|\nabla g\|^2  \, dx \, dy =  \frac{\pi}{2} \int_{(0,\tfrac{1}{2}) \times \R/\tfrac{|\log t|}{2\pi}\Z} 4  \, dx \, dy  = 
\frac{|\log t|}{2},
\end{equation}
which is the variance parameter in the discrete Gaussian shift~\eqref{eq:specific_disc_gaus} appearing in \Cref{thm:shifted_gff_intro}. We recall from the previous subsection that the analogue of the Kenyon-Okounkov conformal structure in our setting is actually given by $\zeta$ rather than $\eta$ (as defined in \Cref{sec:main_proofs}), but since $\eta$ is the composition of $\zeta$ with a conformal map, the Dirichlet energy~\eqref{eq:dirichlet_comp} is the same as the Dirichlet energy in terms of $\zeta$ which is predicted in \Cref{conj:hole}.

Recall that for the shift-mixed periodic Schur process, the shift $S$ of the corresponding cylindric partition may be identified with an asymptotic shift of the height function as $h(\tau^\sharp,y^\sharp) = y^\sharp + S + \tfrac{1}{2}$ for all sufficiently large positive~$y$. Hence $S$ may be viewed the `height of the hole at infinity' for a tiling of the cylinder. The above calculation shows that under this interpretation, the shift-mixed $q^{\mvol}$ measure on cylindric partitions satisfies the appropriate generalization of \Cref{conj:hole}. It is interesting that in dimer models on planar domains with holes, the discrete Gaussian arises in the limit, while on the cylinder the discrete Gaussian shift is present before the limit, cf. \Cref{Sec:Dimers}.

\bibliographystyle{alpha_abbrvsort}
\bibliography{mybib}

\end{document}